\numberwithin{equation}{section}
\newtheorem{theo}[equation]{Theorem}
\newtheorem{coro}[equation]{Corollary}
\newtheorem{lemm}[equation]{Lemma}
\newtheorem{prop}[equation]{Proposition}
\newcommand{\bh}{\mathbf{h}}
\newcommand{\bD}{\mathbf{D}}
\newcommand{\bK}{\mathbf{K}}
\newcommand{\bbL}{\mathbb{L}}
\newcommand{\bbN}{\mathbb{N}}
\newcommand{\bbZ}{\mathbb{Z}}
\newcommand{\calA}{\mathcal{A}}
\newcommand{\calC}{\mathcal{C}}
\newcommand{\calF}{\mathcal{F}}
\newcommand{\calG}{\mathcal{G}}
\newcommand{\calI}{\mathcal{I}}
\newcommand{\calP}{\mathcal{P}}
\newcommand{\calR}{\mathcal{R}}
\newcommand{\calT}{\mathcal{T}}
\let\mod=\undefined
\DeclareMathOperator{\Id}{Id}
\DeclareMathOperator{\tr}{tr}
\DeclareMathOperator{\Aut}{Aut}
\DeclareMathOperator{\Hom}{Hom}
\DeclareMathOperator{\ind}{ind}
\DeclareMathOperator{\Irr}{Irr}
\DeclareMathOperator{\mod}{mod}
\DeclareMathOperator{\rad}{rad}
\DeclareMathOperator{\proj}{proj}
\newcommand{\ol}{\overline}
\newcommand{\vertexD}[1]{\bullet \save*+!D{\scriptstyle #1} \restore}
\newcommand{\vertexL}[1]{\bullet \save*+!L{\scriptstyle #1} \restore}
\newcommand{\vertexU}[1]{\bullet \save*+!U{\scriptstyle #1} \restore}
\newcommand*{\da@rightarrow}{\mathchar"0\hexnumber@\symAMSa 4B }
\newcommand*{\da@leftarrow}{\mathchar"0\hexnumber@\symAMSa 4C }
\newcommand*{\xdashrightarrow}[2][]{%
  \mathrel{%
    \mathpalette{\da@xarrow{#1}{#2}{}\da@rightarrow{\,}{}}{}%
  }%
}
\newcommand{\xdashleftarrow}[2][]{%
  \mathrel{%
    \mathpalette{\da@xarrow{#1}{#2}\da@leftarrow{}{}{\,}}{}%
  }%
}
\newcommand*{\da@xarrow}[7]{%
  % #1: below
  % #2: above
  % #3: arrow left
  % #4: arrow right
  % #5: space left
  % #6: space right
  % #7: math style
  \sbox0{$\ifx#7\scriptstyle\scriptscriptstyle\else\scriptstyle\fi#5#1#6\m@th$}%
  \sbox2{$\ifx#7\scriptstyle\scriptscriptstyle\else\scriptstyle\fi#5#2#6\m@th$}%
  \sbox4{$#7\dabar@\m@th$}%
  \dimen@=\wd0 %
  \ifdim\wd2 >\dimen@
    \dimen@=\wd2 %
  \fi
  \count@=2 %
  \def\da@bars{\dabar@\dabar@}%
  \@whiledim\count@\wd4<\dimen@\do{%
    \advance\count@\@ne
    \expandafter\def\expandafter\da@bars\expandafter{%
      \da@bars
      \dabar@
    }%
  }%
  \mathrel{#3}%
  \mathrel{%
    \mathop{\da@bars}\limits
    \ifx\\#1\\%
    \else
      _{\copy0}%
    \fi
    \ifx\\#2\\%
    \else
      ^{\copy2}%
    \fi
  }%
  \mathrel{#4}%
}
\title[Derived equivalences for the derived discrete algebras]{Derived equivalences for the derived discrete algebras are standard}
\author{Grzegorz Bobi\'nski}
\address{Grzegorz Bobi\'nski\newline
Faculty of Mathematics and Computer Science\newline
Nicolaus Copernicus University\newline
ul. Chopina 12/18\newline
87-100 Toru\'n\newline
Poland}
\email{gregbob@mat.umk.pl}
\author{Tomasz Ciborski}
\address{Tomasz Ciborski\newline
Faculty of Mathematics and Computer Science\newline
Nicolaus Copernicus University\newline
ul. Chopina 12/18\newline
87-100 Toru\'n\newline
Poland}
\email{tomaszcib@mat.umk.pl}
\subjclass{Primary: 18E30; Secondary: 16G10}
\keywords{derived discrete algebra, standard derived equivalence}
\begin{document}

\begin{abstract}
We prove that any derived equivalence between derived discrete algebras is standard, i.e.\ is isomorphic to the derived tensor product by a two-sided tilting complex.
\end{abstract}

\maketitle

\renewcommand{\thefootnote}{}

\footnote{\emph{Corresponding Author}. Grzegorz Bobi\'nski}

\section{Introduction and the main result} \label{sect:intro}

Throughout the paper $\Bbbk$ denotes an algebraically closed field of arbitrary characteristic. All considered categories and functors are assumed to be $\Bbbk$-linear.

For a finite dimensional $\Bbbk$-algebra $A$ we denote by $\mod A$ the category of finite dimensional left $A$-modules and by $\bD^b (\mod A)$ the bounded derived category of $\mod A$. Algebras $A$ and $B$ are said to be \emph{derived equivalent} if there exists a triangle equivalence between $\bD^b (\mod A)$ and $\bD^b (\mod B)$ (such equivalences are called \emph{derived equivalences} between $A$ and $B$). Rickard has proved~\cite{Rickard}*{Theorem~3.3} that if algebras $A$ and $B$ are derived equivalent, then there exists a complex $X$ of $B$-$A$-bimodules such that the derived tensor product $X \otimes^\bbL_A - \colon \bD^b (\mod A) \to \bD^b (\mod B)$ is a derived equivalence. A derived equivalence is called \emph{standard} if it is isomorphic to a derived equivalence of the form $X \otimes^\bbL_A -$, for a complex $X$ of $B$-$A$-bimodules. An open question posed by Rickard is whether every derived equivalence between finite dimensional $\Bbbk$-algebras is standard. This question has been answered affirmatively for some classes of algebras, including the triangular and (anti-)Fano algebras~\cites{Chen, Minamoto, MiyachiYekutieli}. Moreover, Orlov's theorem~\cite{Orlov}*{Theorem~2.2} stating that every derived equivalence between smooth projective varieties is a Fourier--Mukai transform can be viewed as a geometric analogue of these results.

Derived discrete algebras are algebras introduced by Vossieck~\cite{Vossieck} with nontrivial derived categories which are still accessible for direct calculations. This class of algebras is closed under derived equivalences and has been an object of intensive studies. It serves as a test case for verifying and studying homological conjectures and problems (see for example~\cites{ArnesenLakingPauksztelloPrest, Bobinski2011b, BobinskiKrause, BobinskiSchmude, Broomhead, BroomheadPauksztelloPloog2016, BroomheadPauksztelloPloog2017, Qin}). In particular, Chen and Zhang~\cite{ChenZhang}*{Theorem~3.6} have proved that the derived equivalences between derived discrete algebras of finite global dimension are standard. The following main result of the paper extends this result to arbitrary derived discrete algebras.

\begin{theo} \label{theo:main_general}
If $A$ and $B$ are derived discrete $\Bbbk$-algebras, then every derived equivalence between $A$ and $B$ is standard.
\end{theo}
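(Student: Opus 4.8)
The plan is to reduce the assertion, by formal arguments, to a rigidity property of the derived category of a single well-chosen algebra in each derived equivalence class, and then to establish that rigidity from the explicit structure theory of derived discrete algebras. I would begin with two elementary closure properties of standard equivalences: the composite of standard equivalences $X\otimes^\bbL_A-\colon\bD^b(\mod A)\to\bD^b(\mod B)$ and $Y\otimes^\bbL_B-\colon\bD^b(\mod B)\to\bD^b(\mod C)$ is the standard equivalence $(Y\otimes^\bbL_B X)\otimes^\bbL_A-$, and the quasi-inverse of $X\otimes^\bbL_A-$ is the standard equivalence $\mathbf{R}\Hom_B(X,B)\otimes^\bbL_B-$. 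Combined with Rickard's theorem quoted in the introduction---which produces a standard equivalence between any two derived equivalent algebras---these show that the property ``every derived equivalence with source $A$ is standard'' depends only on the derived equivalence class of $A$. It therefore suffices to prove the theorem for one representative in each derived equivalence class of derived discrete algebras; by the classification of Vossieck~\cite{Vossieck} and of Bobi\'nski, Geiss and Skowro\'nski, these may be taken to be the path algebras of linearly oriented quivers of type $\mathbb{A}_n$ (the case of finite global dimension) and the gentle one-cycle algebras $\Lambda(r,n,m)$ violating the clock condition (the case of infinite global dimension).

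Next comes a second, equally formal, reduction. Fix such a representative $A$ and a triangle auto-equivalence $F$ of $\bD^b(\mod A)$. Then $T=F(A)$ is a tilting complex over $A$ whose endomorphism algebra is isomorphic, via $F$, to $\Hom_{\bD^b(\mod A)}(A,A)=A$, so by Rickard's theorem there is a standard auto-equivalence $G$ of $\bD^b(\mod A)$ with $G(A)\cong T$; after replacing $F$ by $G^{-1}F$ we may assume $F(A)\cong A$. The $\Bbbk$-algebra automorphism of $A$ induced by $F$ may then be removed by composing with the standard twist $-\otimes^\bbL_A{}_1A_\sigma$ for a suitable automorphism $\sigma$. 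Hence the theorem reduces to the assertion that \emph{every triangle auto-equivalence of $\bD^b(\mod A)$ which fixes $A$ and induces the identity on $A=\Hom_{\bD^b(\mod A)}(A,A)$ is isomorphic to the identity functor.} This last reduction applies verbatim to any finite dimensional algebra and isolates precisely the difficulty behind Rickard's question: a triangle functor is not determined by its action on objects, so the displayed statement is a genuine rigidity phenomenon---in essence the strong uniqueness of the canonical differential graded enhancement of $\bD^b(\mod A)$---which is not known in general.

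For the representatives of finite global dimension the displayed statement is contained in the theorem of Chen and Zhang~\cite{ChenZhang} (and also follows from the completely understood description of $\bD^b(\mod\Bbbk\mathbb{A}_n)$ and its triangle auto-equivalences). The substance of the proof is therefore the algebras $\Lambda=\Lambda(r,n,m)$, for which I would invoke their detailed, well-documented derived structure (due to Vossieck, Bobi\'nski--Geiss--Skowro\'nski, Bobi\'nski, and Broomhead--Pauksztello--Ploog): a description of the perfect complexes and of the singularity category, which is again discrete following Kalck's computation of singularity categories of gentle algebras; the explicit shape of the Auslander--Reiten quiver of $\bD^b(\mod\Lambda)$, with its finitely many components and with one-dimensional Hom-spaces between indecomposables in the relevant situations; and, crucially, the combinatorial rigidity of $\bD^b(\mod\Lambda)$, namely that it is recovered from its Auslander--Reiten quiver together with the mesh relations and the finitely many ``tube-like'' identifications. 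Tracking an auto-equivalence $F$ that fixes $A$ and induces the identity on its endomorphism algebra through these explicit models, one shows that $F$ fixes every indecomposable object, that $F$ may be taken to act as the identity on a family of morphisms generating the category, and finally that such an $F$ is naturally isomorphic to the identity functor---the only obstruction being a $\Bbbk^\times$-valued $1$-cocycle on the underlying graph of the Auslander--Reiten quiver, supported on its finitely many cycles, which is in turn realised by a standard automorphism twist.

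The step I expect to be the main obstacle is exactly this last one: upgrading ``$F$ fixes all indecomposables and acts trivially on a generating family of morphisms'' to ``$F\cong\Id$''. Determining the action of $F$ on objects and on enough morphisms is finite, if intricate, bookkeeping with the Auslander--Reiten combinatorics; passing from there to a natural isomorphism with the identity functor is where the combinatorial rigidity (standardness) of $\bD^b(\mod\Lambda)$ and the discreteness of $\Lambda$ are used in an essential way, the analogous statement being false for general algebras. The reductions and the combinatorial bookkeeping, by contrast, are routine once the classification results and the structure theory of $\bD^b(\mod\Lambda)$ are in hand.
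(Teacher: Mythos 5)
Your two formal reductions are sound and essentially parallel the paper's: closure of standardness under composition and quasi-inverse plus Rickard's theorem reduces everything to one representative per derived equivalence class, the finite global dimension case is delegated to Chen--Zhang, and the infinite global dimension case is reduced to the algebras $\Lambda(n,m)$ of the Bobi\'nski--Gei\ss--Skowro\'nski classification. The gap is that the statement you correctly identify as carrying all the weight --- that a triangle auto-equivalence of $\bD^b(\mod\Lambda)$ fixing $\Lambda$ and inducing the identity on $\Hom(\Lambda,\Lambda)$ is isomorphic to the identity --- is not actually proved. The paragraph that is supposed to do this ("tracking $F$ through the explicit models\dots the only obstruction being a $\Bbbk^\times$-valued $1$-cocycle on the AR quiver, realised by a standard automorphism twist") is a heuristic, not an argument: you never exhibit why $F$ fixes every indecomposable, never specify the generating family of morphisms, and never explain why the putative cocycle obstruction can be trivialised. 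This is precisely where the paper spends Sections 3 and 4: it first replaces your reduction by the sharper Chen--Ye framework (Proposition 2.1 of the paper), which reduces standardness of all derived equivalences out of $A$ to the statement that every \emph{pseudo-identity} on $\bK^b(\proj A)$ --- a triangle functor fixing \emph{all} objects and restricting to the identity on every $\Sigma^n\proj A$ --- is isomorphic to $(\Id,\Id_\Sigma)$; it then presents $\ind\bK^b(\proj\Lambda)$ explicitly as a quotient $\Bbbk\Gamma/\calI$ of a path category, computes all Hom-spaces (which are \emph{not} all one-dimensional: Corollary 3.5 exhibits two-dimensional ones, and the extra basis elements $g'_{U,V}$ are exactly what makes the normalisation of $F$ on irreducible maps delicate), and constructs the natural isomorphism by a careful induction using Proposition 3.13.

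A second, independent omission: a derived equivalence is a \emph{pair} $(F,\omega)$, and even after producing a natural isomorphism $F\cong\Id$ of plain functors one must still show the resulting connecting isomorphism $\omega'\colon\Sigma\to\Sigma$ can be normalised to $\Id_\Sigma$ by a further natural automorphism of the identity. Your proposal never mentions this. In the paper this occupies all of subsection 4.3 (Lemmas 4.3--4.6), and in the case $n=1$, $m=0$ the connecting isomorphism is genuinely nontrivial and requires constructing an auxiliary natural automorphism $\eta$ by induction along powers of $\Sigma$. Without both of these ingredients --- the explicit rigidity computation and the treatment of $\omega$ --- the proposal does not constitute a proof.
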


Recall that if $A$ and $B$ are derived equivalent, then $A$ is of finite global dimension if and only if $B$ is of finite global dimension (see \cite{Happel}*{Lemma~III.1.5}). Consequently, in view of the above-mentioned result of Chen and Zhang (\cite{ChenZhang}*{Theorem~3.6}) Theorem~\ref{theo:main_general} follows from the following.

\begin{theo} \label{theo:main_special}
If $A$ and $B$ are derived discrete $\Bbbk$-algebras of infinite global dimension, then every derived equivalence between $A$ and $B$ is standard.
\end{theo}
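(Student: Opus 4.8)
The plan is to reduce the claim to a finite list of algebras and then to invoke, for each of them, the known criteria that force a derived equivalence to be standard. Derived discrete algebras of infinite global dimension have been classified (by Vossieck, refined by Bobiński–Geiß–Skowroński): up to Morita equivalence they are exactly the gentle one-cycle algebras $\Lambda(n,r)$ whose cycle does not satisfy the clock condition, parametrized by integers $n \geq r \geq 1$. Since derived equivalence of such algebras is completely understood (two algebras $\Lambda(n,r)$ and $\Lambda(n',r')$ are derived equivalent if and only if $n=n'$ and $r=r'$), a derived equivalence between $A$ and $B$ forces $A$ and $B$ to be Morita equivalent to the same $\Lambda(n,r)$. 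So I may assume $A = B = \Lambda(n,r)$ and must show every triangle autoequivalence $F$ of $\bD^b(\mod \Lambda(n,r))$ is standard.

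**Key steps.**
First I would recall the structure of the group $\mathrm{TrPic}(\Lambda)$ of standard autoequivalences (classes of two-sided tilting complexes): it contains the shift $[1]$, the automorphism group $\mathrm{Out}(\Lambda)$ acting by twisting bimodules, and any spherical/parachute twists arising from the combinatorics of the algebra. Second, I would use the fact that the derived Picard group of a derived discrete algebra of infinite global dimension is known explicitly (this is essentially in the work on the Auslander–Reiten structure and silting theory for these algebras): the indecomposable objects and the Auslander–Reiten quiver of $\bD^b(\mod \Lambda(n,r))$ have a very rigid shape — a union of $\bbZ A_\infty^\infty$ components together with one or more tubes — and any triangle autoequivalence must permute these components and preserve the AR translation $\tau$. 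Third, the crucial reduction: given an arbitrary $F$, I would post-compose it with standard equivalences (shifts, outer automorphisms, twists) to arrange that $F$ fixes a particular tilting object, e.g. a canonical silting complex whose endomorphism algebra is $\Lambda(n,r)$ itself; an autoequivalence fixing a tilting object up to isomorphism and acting as the identity on its endomorphism ring is automatically isomorphic to the identity functor. Concretely I would appeal to Rickard's criterion together with the observation that for these algebras every tilting complex can be moved to a fixed one by a standard equivalence, so the "nonstandard part" of $F$ is an autoequivalence fixing a tilting complex, which must be standard.

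**Main obstacle.**
The hard part will be the infinite global dimension: unlike in Chen–Zhang's finite-global-dimension case, $\Lambda(n,r)$ is not a "triangular" algebra and one cannot directly quote the general theorems of Chen/Minamoto on (anti-)Fano or triangular algebras. So the real content is to show that the subgroup of standard autoequivalences already surjects onto the whole group of triangle autoequivalences modulo the ones visibly fixing a tilting complex — equivalently, that every autoequivalence transports a fixed silting/tilting complex to another one that is reachable by a standard equivalence. I expect this to require a careful analysis of which objects of $\bD^b(\mod \Lambda(n,r))$ can be the terms of a tilting complex, using the classification of exceptional objects and the combinatorics of the string and band modules over the gentle algebra $\Lambda(n,r)$; the tubes (coming from the band modules / the unique "periodic" part forced by infinite global dimension) are exactly the new feature, and controlling how $F$ acts on them, and checking it agrees there with a suitable standard equivalence, is where the argument will be most delicate.
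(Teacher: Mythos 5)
Your reduction to a single algebra $\Lambda(n,m)$ (via the Bobi\'nski--Gei\ss--Skowro\'nski classification and Rickard's theorem that derived equivalent algebras are linked by \emph{some} standard equivalence) matches the paper's first step. But the core of your argument rests on the assertion that ``an autoequivalence fixing a tilting object up to isomorphism and acting as the identity on its endomorphism ring is automatically isomorphic to the identity functor,'' and this is precisely Rickard's open question in disguise, not a known fact. Indeed, for \emph{any} finite dimensional algebra $A$ and any triangle autoequivalence $F$ of $\bD^b(\mod A)$, one can compose $F$ with a standard equivalence (Rickard) and a twist by an outer automorphism so that the result fixes the tilting complex $A$ and induces the identity on $\operatorname{End}(A)$ up to inner automorphism; if your claim held, every derived equivalence between any two algebras would be standard. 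So this step cannot be quoted --- it is exactly what must be proved, and it is where all of the paper's technical work lives: one must control $F$ not just on the single object $\Lambda$ but on the whole subcategory $\proj\Lambda$, and then show that a ``pseudo-identity'' (a triangle functor equal to the identity on objects and on each shifted copy of $\proj\Lambda$) is isomorphic to the identity \emph{as a triangle functor}. The paper does this via the Chen--Ye criterion ($\bK$-standardness of $\proj\Lambda$ implies $\bD$-standardness of $\mod\Lambda$), an explicit presentation of $\operatorname{ind}\bK^b(\proj\Lambda)$ as a quiver with relations, an inductive construction of the natural isomorphism along irreducible maps, and a separate argument trivializing the connecting isomorphism $\omega'$ of the suspension --- none of which has a counterpart in your outline.

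Two smaller points. First, your concern about ``tubes coming from band modules'' is misplaced: derived discreteness is equivalent to the absence of homotopy bands, so $\bD^b(\mod\Lambda(n,m))$ has no tubes and no band objects; the genuinely new feature of the infinite global dimension case is rather that $\bK^b(\proj\Lambda)$ is a proper subcategory of $\bD^b(\mod\Lambda)$ and that the relevant Hom-spaces can be two-dimensional (the maps $f_{U,V}$ and $g_{U,V}$ of the paper), which is what makes the normalization of a pseudo-identity delicate. Second, knowledge of the derived Picard group would not by itself close the gap: knowing all standard autoequivalences up to isomorphism does not tell you that an abstract triangle autoequivalence agreeing with one of them on objects is isomorphic to it as a functor.
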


Theorem~\ref{theo:main_special} has already been proved in some cases by Chen and Ye (see~\cite{ChenYe}*{Section~7}). In this paper we present a unified proof, which also covers the cases studied by Chen and Ye. A crucial fact we use in the proof is Proposition~\ref{prop:auto}, which allows us to use a criterion of Chen and Ye~\cite{ChenYe} (see Proposition~\ref{prop:standard}).

The paper is organized as follows. In Section~\ref{sect:prelim} we present a method developed by Chen and Ye in~\cite{ChenYe} which can be used to verify that, for a given algebra $A$, all derived equivalences starting at $\bD^b (\mod A)$ are standard. We also give a list of representatives of the derived equivalence classes of the derived discrete algebras of infinite global dimension there. Next, Sections~\ref{sect:objmorph} and~\ref{sect:category} are devoted to a description of the homotopy categories of projective modules for the derived discrete algebras of infinite global dimension. Finally in Section~\ref{sect:proof} we prove Theorem~\ref{theo:main_special}.

\section{Preliminaries} \label{sect:prelim}

Throughout the paper by $\bbZ$, $\bbN$, and $\bbN_+$ we denote the sets of integers, nonnegative integers, and positive integers, respectively. If $i, j \in \bbZ$, then $[i, j]$ denotes the set of all $k \in \bbZ$ such that $i \leq k \leq j$. Similarly, if $i \in \bbZ$, then by $[i, \infty)$ ($(-\infty, i]$) we denote the set of all $k \in \bbZ$ such that $i \leq k$ ($k \leq i$, respectively).

We introduce now basic definitions following the exposition in~\cite{ChenYe}. Let $\calT$ and $\calT'$ be triangulated categories with respective suspension functors $\Sigma$ and $\Sigma'$. By a \emph{triangle functor} from $\calT$ to $\calT'$ we mean a pair $(F, \omega)$, where $F \colon \calT \to \calT'$ is a functor and $\omega \colon F \Sigma \to \Sigma' F$ is a natural isomorphism, such that the triangle $F X \xrightarrow{F f} F Y \xrightarrow{F g } F Z \xrightarrow{\omega_X \circ F h} \Sigma' F X$ is exact in $\calT'$, for every triangle $X \xrightarrow{f} Y \xrightarrow{g} Z\xrightarrow {h} \Sigma X$ which is exact in $\calT$. In the above situation $\omega$ is called a \emph{connecting isomorphism}. A \emph{natural transformation} $\eta \colon (F, \omega) \to (F', \omega')$ between two triangle functors $(F, \omega), (F', \omega') \colon \calT \to \calT'$ is a natural transformation from $F$ to $F'$ such that $\omega_X' \circ \eta_{\Sigma X} = \Sigma' \eta_X \circ \omega_X$, for every object $X$ in $\calT$.

Let $\calA$ be a category. We denote by $\bK^b (\calA)$ the bounded homotopy category of $\calA$. We can identify $\calA$ with the full subcategory of $\bK^b (\calA)$ formed by the stalk complexes concentrated in degree zero. We call a triangle functor $(F, \omega) \colon \bK^b (\calA) \to \bK^b (\calA)$ a \emph{pseudo-identity} if $F (X) = X$, for every object $X$ of $\bK^b (\calA)$, and $F |_{\Sigma^n \calA}$ coincides with the identity functor $\Id_{\Sigma^n \calA}$, for all $n \in \bbZ$. It follows from~\cite{ChenYe}*{Corollary~3.4} that any pseudo-identity is necessarily an autoequivalence.

For a finite dimensional $\Bbbk$-algebra $A$ by $\proj A$ we denote the full subcategory of $\mod A$ consisting of the projective $A$-modules. The following consequence of results of~\cite{ChenYe} will be a crucial tool in proving the main result.

\begin{prop} \label{prop:standard}
Let $A$ be a finite dimensional $\Bbbk$-algebra such that any pseudo-identity $(F, \omega)$ on $\bK^b (\proj A)$ is isomorphic, as a triangle functor, to $(\Id_{\mathbf{K}^b(\proj A)}, \Id_\Sigma)$. Then for any finite dimensional $\Bbbk$-algebra $B$ every derived equivalence between $A$ and $B$ is standard.
\end{prop}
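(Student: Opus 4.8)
The plan is to follow the strategy of Chen and Ye. Let $B$ be a finite dimensional algebra and $G \colon \bD^b(\mod A) \to \bD^b(\mod B)$ a derived equivalence. First I would replace $G$ by a triangle equivalence between the homotopy categories of projectives $\bK^b(\proj A)$ and $\bK^b(\proj B)$, using the standard identification of $\bD^b(\mod A)$ with $\bK^b(\proj A)$ (valid since $A$, being a derived discrete algebra, has $\mod A$ with enough projectives; more generally this identification holds for any finite dimensional algebra after replacing modules by projective resolutions, though in the bounded setting one must be slightly careful). The key point, coming from Rickard's theorem quoted in the introduction, is that there exists a \emph{standard} derived equivalence $H = X \otimes^\bbL_A - \colon \bD^b(\mod A) \to \bD^b(\mod B)$, and hence $H$ likewise induces a triangle equivalence $\bK^b(\proj A) \to \bK^b(\proj B)$. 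The composite $F := H^{-1} \circ G \colon \bK^b(\proj A) \to \bK^b(\proj A)$ is then a triangle autoequivalence, and it suffices to show that $F$ is isomorphic, as a triangle functor, to a pseudo-identity, because then by hypothesis $F$ is isomorphic to $(\Id, \Id_\Sigma)$, whence $G \cong H$ is standard (the class of standard equivalences being closed under composition with standard ones and under natural isomorphism).

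Next I would arrange that $F$ is a pseudo-identity. The obstacle is that an arbitrary triangle autoequivalence of $\bK^b(\proj A)$ need not fix objects on the nose, nor act as the identity on the subcategories $\Sigma^n \proj A$. To fix this one uses that $F$ permutes the indecomposable projective $A$-modules (these are exactly the indecomposable objects of $\bK^b(\proj A)$ that are, up to shift, "local" in the appropriate sense — more precisely, $F$ must send $\proj A$, viewed inside $\bK^b(\proj A)$, to a subcategory equivalent to it, and since $A$ is basic one can postcompose with an algebra automorphism of $A$, i.e.\ a standard autoequivalence coming from an invertible bimodule, to make $F$ fix each indecomposable projective up to isomorphism). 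After this normalization, $F$ restricts to a self-equivalence of $\proj A$ that is isomorphic to the identity; replacing $F$ by an isomorphic triangle functor one may then assume $F|_{\proj A} = \Id$. Using the connecting isomorphism $\omega$ one propagates this to all shifts $\Sigma^n \proj A$, and finally, since every object of $\bK^b(\proj A)$ is built from objects of $\bigcup_n \Sigma^n \proj A$ by finitely many cones, one adjusts $F$ by a further natural isomorphism to make $F(X) = X$ on all objects. This produces a pseudo-identity $(F', \omega')$ isomorphic as a triangle functor to the original $F$.

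Finally, by the assumption on $A$, this pseudo-identity $(F', \omega')$ is isomorphic as a triangle functor to $(\Id_{\bK^b(\proj A)}, \Id_\Sigma)$. Tracing back through the chain of isomorphisms of triangle functors $G \cong H \circ F \cong H \circ F' \cong H \circ \Id = H$, we conclude that $G$ is isomorphic to the standard equivalence $H = X \otimes^\bbL_A -$, so $G$ is standard. The main obstacle in this argument is the normalization step of the second paragraph: showing that after composing with a suitable standard autoequivalence of $\bK^b(\proj A)$ (necessarily induced by an automorphism of $A$ together with a shift) one may assume that $F$ fixes the indecomposable projectives and acts as the identity on $\proj A$. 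This is precisely the content that makes Chen and Ye's reduction work, and it relies on analysing how triangle autoequivalences act on the "standardly placed" subcategory $\proj A$ inside $\bK^b(\proj A)$; everything after that point is a matter of chasing natural isomorphisms and invoking the stated hypothesis together with \cite{ChenYe}*{Corollary~3.4}.
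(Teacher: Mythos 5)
Your proposal does not match the paper's proof, which is a three-step citation: by \cite{ChenYe}*{Lemma~4.2} the hypothesis says exactly that $\proj A$ is $\bK$-standard, by \cite{ChenYe}*{Theorem~6.1} this implies $\mod A$ is $\bD$-standard, and by \cite{ChenYe}*{Theorem~5.10} this gives the conclusion. You instead try to reconstruct the whole Chen--Ye machinery from scratch, and in doing so you leave unproved precisely the steps that constitute the content of those cited theorems. The most serious gap is your opening identification of $\bD^b(\mod A)$ with $\bK^b(\proj A)$: this fails whenever $A$ has infinite global dimension, which is exactly the case this paper needs (for finite global dimension the result is already known by Chen--Zhang). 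In general $\bK^b(\proj A)$ is only the subcategory of perfect complexes, and a derived equivalence restricts to it, but knowing that $H^{-1}\circ G$ is isomorphic to the identity on $\bK^b(\proj A)$ does not by itself control $H^{-1}\circ G$ on all of $\bD^b(\mod A)$; bridging that gap is the substance of \cite{ChenYe}*{Theorem~6.1} ($\bK$-standard implies $\bD$-standard), and your sketch never addresses it. The parenthetical ``one must be slightly careful'' buries the main difficulty of the entire problem.

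The second gap is the normalization step you yourself flag as ``the main obstacle'': showing that after composing with a suitable standard autoequivalence one may assume $H^{-1}\circ G$ is an honest pseudo-identity (fixing all objects and restricting to the identity on each $\Sigma^n(\proj A)$, not merely on $\proj A$). You assert this can be done by twisting by an algebra automorphism and chasing natural isomorphisms, but you give no argument, and this is again nontrivial content (it is part of the proof of \cite{ChenYe}*{Theorem~5.10} together with Rickard's construction of $H$ so that $H(A)\cong G(A)$). As written, the proposal is a plausible outline of the Chen--Ye strategy rather than a proof; the paper's approach of citing the three results is both correct and the intended reading of the statement.
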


\begin{proof}
According to~\cite{ChenYe}*{Lemma~4.2} our assumption implies that the category $\proj A$ is $\bK$-standard in the sense of~\cite{ChenYe}*{Definition~4.1}. By~\cite{ChenYe}*{Theorem~6.1}, this implies that the category $\mod A$ is $\bD$-standard in the sense of~\cite{ChenYe}*{Definition 5.1}. Now our claim follows from \cite{ChenYe}*{Theorem 5.10}.
\end{proof}

Let $A$ be a finite dimensional $\Bbbk$-algebra. Following~\cite{Vossieck} we say that $A$ is \emph{derived discrete} if for every vector $\bh \in \bbN^\bbZ$ there are only finitely many isomorphism classes of indecomposable objects in $\bD^b (\mod A)$ with cohomology dimension vector $\bh$. Vossieck~\cite{Vossieck}*{Theorem 2.1} has proved that if an algebra $A$ is derived discrete then either $A$ is piecewise hereditary of Dynkin type or $A$ is a one-cycle gentle algebra not satisfying the clock condition. Since the piecewise hereditary algebras have finite global dimension, we may concentrate on one-cycle gentle algebras not satisfying the clock condition. In order to describe representatives of the derived equivalence classes of the derived discrete algebras of infinite global dimension, we need to introduce the following notation: if $n \in \bbN_+$ and $m \in \bbN$, then by $\Lambda (n, m)$ we mean the path algebra of the quiver $Q (n, m)$ of the form
\[
\vcenter{\xymatrix{%
& & & & & \vertexD{1} \ar@/_/[]+L;[ld]+U_{\alpha_0} &
\\
\vertexU{-m} & \vertexU{-m + 1} \ar[l]_{\alpha_{-m}} &
\cdots \ar[l]_{\alpha_{-m + 1}} & \vertexU{-1} \ar[l]_-{\alpha_{-2}} & \vertexL{0} \ar[l]_{\alpha_{-1}}
\ar@/_/[]+D;[rd]+L_{\alpha_{n - 1}} & & \raisebox{-0.5em}[1em][1em]{\vdots} \ar@/_/[]+U;[lu]+R_{\alpha_1}
\\
& & & & & \vertexU{n - 1} \ar@/_/[]+R;[ru]+D_{\alpha_{n - 2}} &}}
\]
bounded by
\[
\alpha_{n - 1} \alpha_0, \, \alpha_{n - 2} \alpha_{n - 1}, \, \ldots, \, \alpha_0 \alpha_1.
\]
Note that $\Lambda (n, 0)$ is the path algebra of the $n$-cycle modulo the ideal generated by the paths of length $2$.

The following proposition is a direct consequence of~\cite{BobinskiGeissSkowronski}*{Theorem~A} (and the remark following it).

\begin{prop} \label{prop:discrete}
Let $A$ be a derived discrete $\Bbbk$-algebra. If the global dimension of $A$ is infinite, then $A$ is derived equivalent to $\Lambda (n, m)$, for some $n \in \bbN_+$ and $m \in \bbN$.
\end{prop}

\section{Objects and morphisms in the homotopy category of $\proj \Lambda (n, m)$} \label{sect:objmorph}

Throughout this section we put $\Lambda := \Lambda (n, m)$ and $Q := Q (n, m)$, for $n \in \bbN_+$ and $m \in \bbN$. The aim of this section is to describe the indecomposable objects of the category $\bK^b (\proj \Lambda)$ and the morphism between them.

\subsection{Objects} \label{subsect:objects}

For $u \in [-m, n - 1]$, denote by $P_u$ the corresponding indecomposable projective $\Lambda$-module, i.e.\ the one whose top is the simple module concentrated at $u$. Similarly, for a path $\sigma$ in $Q$ from $u$ to $v$, we denote by $P_\sigma$ the corresponding map $P_v \to P_u$.

By abuse of language by a path in $\Lambda$ we mean a path in $Q$ which does not contain any of the zero relations $\alpha_{n - 1} \alpha_0$, $\alpha_{n - 2} \alpha_{n - 1}$, \ldots, $\alpha_0 \alpha_1$. For $u \in [-m, n - 1]$ we denote by $\sigma_u$ the maximal path in $\Lambda$ ending at $u$, i.e.\
\[
\sigma_u :=
\begin{cases}
\alpha_u \cdots \alpha_0 & \text{if $u \in [-m, -1]$},
\\
\alpha_u & \text{if $u \in [0, n - 1]$}.
\end{cases}
\]
By $s (u)$ we denote the starting vertex of the path $\sigma_u$. In this way we obtain a map $s \colon [-m, n - 1] \to [-m, n - 1]$ which is given by the formula
\[
s (u) :=
\begin{cases}
1 & \text{if $u \in [-m, -1]$ and $n > 1$},
\\
u + 1 & \text{if $u \in [0, n - 2]$ and $n > 1$},
\\
0 & \text{if $u \in [-m, 0]$ and $n = 1$ or either $u = n - 1$}.
\end{cases}
\]
If $s (u) = s (v)$ and $u \leq v$, then there exists a unique path $\varsigma_{u, v}$ such that $\sigma_u = \varsigma_{u, v} \sigma_v$. Note that $\varsigma_{u, v}$ is the stationary path at $u$, if $v = u$, and $\varsigma_{u, v} = \alpha_u \cdots \alpha_{v - 1}$, if $u < v$. % We also put $\varsigma_{u, s (u)} := \sigma_u$.

Let $\calC$ be the set of quadruples $(k, u, l, v)$ such that $k \in \bbZ$, $l \in \bbN$, $u, v \in [-m, n - 1]$, and either $v = s^l (u)$ or $v < s^l (u) \leq 0$, where by $s^0$ we mean the identity map $[-m, n - 1] \to [-m, n - 1]$. Note that the condition $v < s^l (u) \leq 0$ can be expanded to: either $l = 0$ and $v < u \leq 0$ or $l > 0$ and $v < 0 = s^l (u)$.

For $(k, u, l, v) \in \calC$, let $C_{k, u, l, v}$ be the complex $(C_{k, u, l, v}^i, d_{k, u, l, v}^i)_{i \in \bbZ}$ of projective $\Lambda$-modules defined by:
\begin{gather*}
C_{k, u, l, v}^i :=
\begin{cases}
P_{s^{i - k} (u)} & \text{if either $i \in [k, k + l] \setminus \{ k + l - 1 \}$}
\\
& \quad \text{or $i = k + l - 1$, $v = s^l (u)$, and $l > 0$},
\\
P_{s^{l - 1} (u)} \oplus P_v & \text{if $i = k + l - 1$, $v < s^l (u)$, and $l > 0$},
\\
P_v & \text{if $i = k + l - 1$, $v < s^l (u)$, and $l = 0$},
\\
0 & \text{otherwise},
\end{cases}
\\
\intertext{and}
d_{k, u, l, v}^i :=
\begin{cases}
P_{\sigma_{s^{i - k} (u)}} & \text{if either $i \in [k, k + l - 3]$}
\\
& \quad \text{or $i = k + l - 2$, $v = s^l (u)$, and $l \geq 2$},
\\
& \quad \text{or $i = k + l - 1$, $v = s^l (u)$, and $l \geq 1$},
\\
[P_{\sigma_{s^{l - 2} (u)}}, 0]^{\tr} & \text{$i = k + l - 2$, $v < s^l (u)$, and $l \geq 2$},
\\
[P_{\sigma_{s^{l - 1} (u)}}, P_{\varsigma_{v, s^l (u)}}] & \text{if $i = k + l - 1$, $v < s^l (u)$, and $l \geq 1$},
\\
P_{\varsigma_{v, s^l (u)}} & \text{if $i = k + l - 1$, $v < s^l (u)$, and $l = 0$},
\\
0 & \text{otherwise}.
\end{cases}
\end{gather*}
Following the convention used in~\cite{ArnesenLakingPauksztello} the complex $C_{k, u, l, v}$ can be `unfold' as
\[
u \xrightarrow{\sigma_u^*} s (u) \to \cdots \to s^{l - 1} (u) \xrightarrow{\sigma_{s^{l - 1} (u)}^*} s^l (u),
\]
if $v = s^l (u)$, and
\[
u \xrightarrow{\sigma_u^*} s (u) \to \cdots \to s^{l - 1} (u) \xrightarrow{\sigma_{s^{l - 1} (u)}^*} s^l (u) \xleftarrow{\varsigma_{v, \alpha_{s^l (u)}}^*} v,
\]
if $v < s^l (u)$, where the leftmost module appears in degree $k$ and, if $l = 0$, the part
\[
u \xrightarrow{\sigma_u^*} s (u) \to \cdots \to s^{l - 1} (u) \xrightarrow{\sigma_{s^{l - 1} (u)}^*} s^l (u)
\]
reduces to $u$. In the above we write $w$ instead of $P_w$ and $\sigma^*$ instead of $P_\sigma$. We will sometimes write the above diagrams in a unified way as
\[
u \xrightarrow{\sigma_u^*} s (u) \to \cdots \to s^{l - 1} (u) \xrightarrow{\sigma_{s^{l - 1} (u)}^*} s^l (u) \xymatrix@C=3em@1{\rule{0.5ex}{0pt} & \rule{0.5ex}{0pt} \ar@{-->}[l]_{\varsigma_{v, \alpha_{s^l (u)}}^*}} v,
\]
i.e.\ the dotted arrow denotes an arrow which (together with its staring vertex $v$) may be not present.

The following is the the first step in describing the category $\bK^b (\proj \Lambda)$.

\begin{prop}
The complexes $C_{k, u, l, v}$, $(k, u, l, v) \in \calC$, form a complete set of representatives of the isomorphism classes of the indecomposable objects in $\bK^b (\proj \Lambda)$.
\end{prop}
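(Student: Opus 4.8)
The goal is to identify the indecomposable objects of $\bK^b(\proj\Lambda)$ with the complexes $C_{k,u,l,v}$. The natural strategy is the one standard for string/gentle algebras: first show every $C_{k,u,l,v}$ is indecomposable, then show every indecomposable complex of projectives is isomorphic to one of them, and finally show the listed complexes are pairwise non-isomorphic. Since $\Lambda$ is gentle, $\bK^b(\proj\Lambda)$ is a Krull--Schmidt category, so it suffices to produce, for an arbitrary bounded complex $P^\bullet$ of projectives, a decomposition into summands drawn from the family $\{C_{k,u,l,v}\}$, and separately to rule out further splitting and further coincidences.

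\textbf{Step 1: the $C_{k,u,l,v}$ are indecomposable.} I would compute $\Hom_{\bK^b(\proj\Lambda)}(C_{k,u,l,v},C_{k,u,l,v})$ and check that its only idempotents are $0$ and the identity, equivalently that the endomorphism algebra is local. Concretely, a chain endomorphism of $C_{k,u,l,v}$ is a tuple of maps $P_{s^{i-k}(u)}\to P_{s^{i-k}(u)}$ (with the obvious modification in the degree $k+l-1$ where a direct sum $P_{s^{l-1}(u)}\oplus P_v$ may occur), compatible with the differentials $d^i_{k,u,l,v}$. Because every $\Hom(P_w,P_{w'})$ has a basis of paths and the algebra is gentle (no ``double'' maps through a commutativity relation), the commutation constraints force all the diagonal entries to be equal scalars up to elements of the radical; the off-diagonal entry $P_v\to P_{s^{l-1}(u)}$ in the mixed case contributes only a radical (non-invertible) endomorphism, or is killed in the homotopy category. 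Hence the endomorphism algebra modulo homotopy is local, so $C_{k,u,l,v}$ is indecomposable. The ``unfolding'' notation from~\cite{ArnesenLakingPauksztello} makes this bookkeeping transparent: these are exactly the ``string complexes'' for the one-cycle gentle algebra $\Lambda$.

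\textbf{Step 2: every indecomposable is of this form.} Here I would use the usual reduction: given $P^\bullet$, first pass to its minimal model (delete contractible summands $P_w\xrightarrow{\Id}P_w$) so that all differentials have entries in the radical, hence are $\Bbbk$-linear combinations of the arrows $\alpha_i$ and the maximal paths $\sigma_u$. Because $\Lambda$ is gentle, through each projective $P_w$ at most one arrow enters and at most one leaves in each ``direction'', so after a change of basis the differential matrices can be put in a normal form whose indecomposable blocks are precisely the complexes described by a walk $u\to s(u)\to\cdots$ in the map $s$, possibly terminated by one extra ``$\varsigma$-arrow'' coming in from a vertex $v<s^l(u)\le 0$. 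Tracking the combinatorics of the map $s$ (it is eventually constant, equal to $1$ if $n>1$ or $0$ if $n=1$, reflecting the single cycle) and of the bound conditions $\alpha_{i-1}\alpha_i=0$ gives exactly the index set $\calC$; the case distinctions in the definition of $C^i_{k,u,l,v}$ and $d^i_{k,u,l,v}$ are precisely the outcomes of this normal-form reduction (whether the walk ends ``cleanly'' at $s^l(u)$ or gets an extra incoming arrow from $v$). This is where one must be careful that no further cancellation is possible, i.e.\ that the produced blocks are already minimal and indecomposable -- but that is Step 1 applied to each block.

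\textbf{Step 3: pairwise non-isomorphism.} Two minimal complexes of projectives are isomorphic in $\bK^b(\proj\Lambda)$ iff they are isomorphic as complexes; so it suffices to read off the invariants $k,u,l,v$ from the complex $C_{k,u,l,v}$: $k$ is the lowest degree with a nonzero term, $l$ together with $v=s^l(u)$ or $v<s^l(u)$ is recovered from the length and shape of the support and which terms are direct sums of two projectives, and $u$ is the top of the projective in degree $k$. Since $s$ is a well-defined map, knowing $u$ and $l$ determines the whole walk, so distinct quadruples give non-isomorphic complexes. \textbf{The main obstacle} I expect is Step 2: carrying out the normal-form reduction of an arbitrary bounded complex of projectives cleanly enough to see that the ``mixed'' blocks (those with a term $P_{s^{l-1}(u)}\oplus P_v$ and the differential $[P_{\sigma_{s^{l-1}(u)}},P_{\varsigma_{v,s^l(u)}}]$) are forced and exhaust the possibilities -- in other words, verifying that the list $\calC$ is complete and not merely sufficient. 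The reference~\cite{Bobinski2011b}*{Section~5} states this without proof, and following~\cite{ArnesenLakingPauksztello} one organizes it by showing these complexes correspond bijectively to the homotopy strings/bands for $\Lambda$, with the bands absent because $\Lambda$ is derived discrete.
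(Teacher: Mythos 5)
Your plan is correct in outline and rests on the same underlying classification, but the paper distributes the work very differently, and the part you compress into your final sentence is in fact the whole of the paper's proof. The paper does not verify indecomposability, completeness, or pairwise non-isomorphism by hand: it invokes the classification of indecomposable objects of $\bK^b (\proj \Lambda)$ for a gentle algebra by homotopy strings and homotopy bands (\cite{BekkertMerklen}*{Theorem~3}, in the formulation of \cite{Bobinski2011a}*{Proposition~3.1}), notes that $\Lambda (n, m)$ admits no homotopy bands (which is exactly its derived discreteness), lists all homotopy strings of $Q (n, m)$ up to inversion, and writes down an explicit dictionary matching each pair $(k, \theta)$ with a quadruple in $\calC$ bijectively. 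By contrast, your Steps 1--3 (local endomorphism rings, minimal models, normal-form reduction of the differential) amount to reproving the Bekkert--Merklen theorem for this particular algebra; your Step 2 in particular is only a description of what such a reduction must achieve --- you flag it yourself as the main obstacle --- and executing it rigorously requires the functorial-filtration or matrix-problem machinery, a substantial undertaking. The citation route reduces everything to combinatorial bookkeeping (enumerating the homotopy strings and matching indices), which is precisely what you would need to supply to finish your argument along the paper's lines; the direct route would buy self-containedness at the cost of redoing a nontrivial known classification. One small slip in your Step 3: for $C_{k, u, 0, v}$ with $v < u$ the complex is supported in degrees $k - 1$ and $k$, so $k$ is not always the lowest nonzero degree; the quadruple is still recoverable from a minimal complex, but the invariants must be read off a little more carefully than you state.
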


\begin{proof}
Let $\ol{Q}$ be the \emph{double quiver} of $Q$, i.e.\ the quiver with the same set of vertices as $Q$ and with an additional arrow $\ol{\alpha} \colon v \to u$, for each arrow $\alpha \colon u \to v$ in $Q$. If $\sigma = \beta_1 \cdots \beta_l$ is a path in $\ol{Q}$, then we put $\ol{\sigma} := \ol{\beta}_l \cdots \ol{\beta_1}$, where $\ol{\ol{\alpha}} = \alpha$, for each arrow $\alpha$ of $Q$. Moreover, $\ol{\sigma} := \sigma$, if $\sigma$ is a stationary path.

By a \emph{homotopy string} in $Q$ we mean a path in $\ol{Q}$ which does not contain a path of the form $\alpha \ol{\alpha}$, where $\alpha$ is an arrow in $\ol{Q}$, as a subpath. Homotopy strings $\theta$ and $\theta'$ are called \emph{equivalent} if $\theta' = \theta$ or $\theta' = \ol{\theta}$. Up to this equivalence, the following is a complete list of homotopy strings in $Q$:
\begin{itemize}

\item
stationary paths,

\item
$\alpha_u \cdots \alpha_{u + l}$, $u \in [-m, n - 1]$, $l \in \bbN$,

\item
$\alpha_u \cdots \alpha_{u + l} \ol{\alpha}_{-1} \cdots \ol{\alpha}_v$, $u \in [-m, n - 1]$, $v \in [-m, -1]$, $l \in \bbN$, $u + l \geq 0$, and $u + l \equiv n - 1 \pmod{n}$.

\end{itemize}
In the above formulas, if $w \geq n$, then $\alpha_w$ denotes $\alpha_r$, where $r$ is the reminder of $w$ divided by $n$.

For a pair $(k, \theta)$ consisting of an integer $k$ and a homotopy string $\theta$ one defines (see~\cite{Bobinski2011a}*{Section~3}) a complex $X_{k, \theta}$ in such a way that:
\begin{itemize}

\item
$X_{k, \theta} = C_{k, u, 0, u}$, if $\theta$ is the stationary path at vertex $u$;

\item
$X_{k, \theta} = C_{k + 1, u + l + 1, 0, u}$, if $\theta = \alpha_u \cdots \alpha_{u + l}$, $u \in [-m, -1]$, $l \in \bbN$, and $u + l < 0$;

\item
$X_{k, \theta} = C_{k, u, u + l + 1, v}$, where $v$ is the reminder of $u + l + 1$ divided by $n$, if $\theta = \alpha_u \cdots \alpha_{u + l}$, $u \in [-m, -1]$, $l \in \bbN$, and $u + l \geq 0$,

\item
$X_{k, \theta} = C_{k, u, l + 1, v}$, where $v$ is the reminder of $u + l + 1$ divided by $n$, if $\theta = \alpha_u \cdots \alpha_{u + l}$, $u \in [0, n - 1]$, $l \in \bbN$,

\item
$X_{k, \theta} = C_{k, u, u + l + 1, v}$, if $\theta = \alpha_u \cdots \alpha_{u + l} \ol{\alpha}_{-1} \cdots \ol{\alpha}_v$, $u, v \in [-m, -1]$, $l \in \bbN$, $u + l \geq 0$, and $u + l \equiv n - 1 \pmod{n}$.

\item
$X_{k, \theta} = C_{k, u, l + 1, v}$, if $\theta = \alpha_u \cdots \alpha_{u + l} \ol{\alpha}_{-1} \cdots \ol{\alpha}_v$, $u \in [0, n]$, $v \in [-m, -1]$, $l \in \bbN$, $u + l \geq 0$, and $u + l \equiv n - 1 \pmod{n}$.

\end{itemize}
Observe that all the quadruples $(k, u, l, v) \in \calC$ and (up to the equivalence of the homotopy strings) all the pairs $(k, \theta)$ appear in the above formulas.

Since $\Lambda$ is derived discrete (which corresponds to the fact that, in the terminology of~\cite{Bobinski2011a}, there are no homotopy bands), it follows from~\cite{BekkertMerklen}*{Theorem~3} (see also~\cite{Bobinski2011a}*{Proposition~3.1} for a formulation tailored to our notation) that the complexes $X_{k, \theta}$, where $k \in \bbZ$ and $\theta$ runs through the above representatives of the equivalence classes of the homotopy strings, form a complete set of representatives of the isomorphism classes of the indecomposable objects in $\bK^b (\proj \Lambda)$. Now the above described correspondence between the complexes $X_{k, \theta}$ and the complexes $C_{k, u, l, v}$ implies the claim.
\end{proof}

\subsection{Morphisms}

Our next aim is to describe bases of the morphism spaces between the complexes defined in subsection~\ref{subsect:objects}.

%First we introduce additional notation: for $u \in [-m, n - 1]$ let $I_u$ be the set of $v \in [-m, n - 1]$ such that $s (v) = s (u)$ and $u \leq v$), i.e.\ $I_u := \{ u, %\ldots, 0 \}$, if $u < 0$, and $I_u = \{ u \}$, otherwise.

For a quadruple $(k, u, l, v) \in \calC$ let $\Phi_{k, u, l, v}$ be the subset of $\calC$ consisting of the quadruples $(k', u', l', v')$ such that:
\begin{itemize}

\item
$k' \leq k \leq k' + l' \leq k + l$,

\item
$s^{l' + 1} (u') = s^{k' + l' + 1 - k} (u)$ (consequently, $s^i (u') = s^{k' + i - k} (u)$, for each $i \in [k - k' + 1, l']$),

\item
if $k' = k$, then $u \leq u'$, and

\item
if $k + l = k' + l'$ and $v < s^l (u)$, then $v \leq v' < s^l (u)$.

\end{itemize}
In the above situation we denote by $\varphi_{C', C}$, where $C := C_{k, u, l, v}$ and $C' := C_{k', u', l', v'}$, a morphism $\varphi_{C', C} \colon C \to C'$ which we define below via its `unfolded diagram' (in the sense of the convention introduced in~\cite{ArnesenLakingPauksztello}). Note that in all the below diagrams the dashed arrows may not appear.

First, if $k' + l' = k$ and $l > 0$, then $\varphi_{C', C}$ is defined by the diagram
\[
\vcenter{\xymatrix{& & u \ar[r] \ar[d]_{\varsigma_{u, s^{l'} (u')}^*} & \cdots \ar[r] & s^l (u) & & v \ar@{-->}[ll]_-{\varsigma_{v, s^l (u)}^*}
\\
u' \ar[r] & \cdots \ar[r] & s^{l'} (u') & & v' \ar@{-->}[ll]_-{\varsigma_{v', s^{l'} (u')}^*}}}.
\]
Similarly, if $k' + l' = k$, $l = 0$, and $v = u$, then $\varphi_{C', C}$ is given by the diagram
\[
\vcenter{\xymatrix{& & u \ar[d]_{\varsigma_{u, s^{l'} (u')}^*}
\\
u' \ar[r] & \cdots \ar[r] & s^{l'} (u') & & v' \ar@{-->}[ll]_-{\varsigma_{v', s^{l'} (u')}^*}}}.
\]
In the terminology of~\cite{ArnesenLakingPauksztello} these are either singleton single maps (if $u < s^{l'} (u')$) or graph maps (if $u = s^{l'} (u')$).

Next, if $k' + l' = k$, $l = 0$, and $v < u$, then $\varphi_{C', C}$ is represented by the diagram
\[
\vcenter{\xymatrix{& & u \ar[d]_{\varsigma_{u, s^{l'} (u')}^*} & & v \ar[ll]_-{\varsigma_{v, u}^*} \ar[d]^{\varsigma_{v, v'}^*}
\\
u' \ar[r] & \cdots \ar[r] & s^{l'} (u') & & v' \ar[ll]_-{\varsigma_{v', s^{l'} (u')}^*}}}.
\]
Again in the terminology of~\cite{ArnesenLakingPauksztello} it is either a singleton double map (if $u < s^{l'} (u')$ and $v < v'$) or a graph map (otherwise).

Finally assume that if $k' + l' > k$. In this case, if $k + l > k' + l'$, then $\varphi_{C', C}$ is defined by the diagram
\[
\vcenter{\xymatrix{& u \ar[rr]^{\sigma_u^*} \ar[d]_{\varsigma_{u, s^{k - k'} (u')}^*} & & s (u) \ar[r] \ar@{=}[d] & \cdots \ar[r] & s^{k' + l' - k} (u) \ar[r] \ar@{=}[d] & \cdots
\\
% u' \ar[r] &
\cdots \ar[r] & s^{k - k'} (u') \ar[rr]^{\sigma_{s^{k - k'} (u')}^*} & & s^{k + 1 - k'} (u') \ar[r] & \cdots \ar[r] & s^{l'} (u') & v' \ar@{-->}[l]_-{\varsigma_{v', s^{l'} (u')}^*}}}.
\]
Next, if $k + l = k' + l'$ and $v = s^l (u)$, then $\varphi_{C', C}$ is represented by the diagram
\[
\vcenter{\xymatrix{& u \ar[rr]^{\sigma_u^*} \ar[d]_{\varsigma_{u, s^{k - k'} (u')}^*} & & s (u) \ar[r] \ar@{=}[d] & \cdots \ar[r] & s^{k' + l' - k} (u) \ar@{=}[d]
\\
% u' \ar[r] &
\cdots \ar[r] & s^{k - k'} (u') \ar[rr]^{\sigma_{s^{k - k'} (u')}^*} & & s^{k + 1 - k'} (u') \ar[r] & \cdots \ar[r] & s^{l'} (u') & v' \ar@{-->}[l]_-{\varsigma_{v', s^{l'} (u')}^*}}}.
\]
Lastly, if $k + l = k' + l'$ and $v < s^l (u)$, then $\varphi_{C', C}$ is given by the diagram
\[
\vcenter{\xymatrix{& u \ar[rr]^{\sigma_u^*} \ar[d]_{\varsigma_{u, s^{k - k'} (u')}^*} & & s (u) \ar[r] \ar@{=}[d] & \cdots \ar[r] & s^l (u) \ar@{=}[d] & v \ar[l]_-{\varsigma_{v, s^l (u)}^*} \ar[d]^{\varsigma_{v, v'}^*}
\\
% u' \ar[r] &
\cdots \ar[r] & s^{k - k'} (u') \ar[rr]^{\sigma_{s^{k - k'} (u')}^*} & & s^{k + 1 - k'} (u') \ar[r] & \cdots \ar[r] & s^{l'} (u') & v' \ar[l]_-{\varsigma_{v', s^{l'} (u')}^*}}}.
\]
In the three above situations the obtained maps are examples of graph maps.

If $(k, u, l, v), (k', u', l', v') \in \calC$, then we write $(k, u, l, v) \preceq_L (k', u', l', v')$ if one of the following conditions is satisfied:
\begin{enumerate}
\renewcommand{\theenumi}{L\arabic{enumi}}

\item
$k \leq k'$;

\item \label{L2}
$k = k' + 1$ and $u' < u$.

\end{enumerate}
Similarly, we write $(k, u, l, v) \preceq_R (k', u', l', v')$ if one of the following conditions is satisfied:
\begin{enumerate}
\renewcommand{\theenumi}{R\arabic{enumi}}

\item \label{R1}
$k' \leq k + l \leq k' + l'$; moreover, if $v < s^l (u)$, then either $k' < k + l - 1$ or $k' = k + l - 1$ and $v \leq u'$.

\item \label{R2}
$k + l = k' + l' + 1$, $v < s^l (u)$, and either $v' < v$ or $v' = s^{l'} (u')$.

\end{enumerate}
For a quadruple $(k, u, l, v) \in \calC$, we define $\Psi_{k, u, l, v}$ to be the subset of $\calC$ consisting of the quadruples $(k', u', l', v')$ such that
\[
(k, u, l, v) \preceq_L (k', u', l', v'), \quad (k, u, l, v) \preceq_R (k', u', l', v'), \quad \text{and} \quad s^{l + 1} (u) = s^{k + l - k'} (u')
\]
(again the last condition implies that $s^{i + 1} (u) = s^{k + i - k'} (u')$, for all $i \in [k' - k + 1, l - 1]$).

We need to consider some cases in order to define, for $(k, u, l, v) \in \calC$ and $(k', u', l', v') \in \Psi_{k, u, l, v}$, a morphism $\psi_{C', C} \colon C \to C'$, where again $C := C_{k, u, l, v}$ and $C' := C_{k', u', l', v'}$. First, if condition~\eqref{R1} is satisfied and $v = s^l (u)$, then $\psi_{C', C}$ is given by the diagram
\[
\vcenter{\xymatrix{\cdots \ar[r] & s^l (u) \ar[d]^{(-1)^{k + l} \cdot \sigma_{s^l (u)}^*}
\\
\cdots \ar[r] & s^{k + l - k'} (u') \ar[r] & \cdots}}
\]
(all the remaining maps between the components of the complexes $C$ and $C'$ are zero). This map is (up to sign) a singleton single map if either $k + l = k'$ or $k + l = k' + 1$ and $u' < s^l (u)$. In the remaining cases it is (up to sign) a single map representing a quasi-graph map.

Next assume that condition~\eqref{R1} is satisfied and $v < s^l (u)$. Consequently $k' < k + l$ and $s^l (u) \leq 0$. By reversing the diagram representing $C$, we get the following presentation of $\psi_{C', C}$:
\[
\vcenter{\xymatrix{& v \ar[rr]^{\varsigma_{v, s^l (u)}^*} \ar[d]_{(-1)^{k + l} \cdot \varsigma_{v, s^{k + l - k' - 1} (u')}^*} & & s^l (u) \ar[d]^{(-1)^{k + l} \cdot \sigma_{s^l (u)}^*} & \cdots \ar[l]
\\
\cdots \ar[r] & s^{k + l - k' - 1} (u') \ar[rr]^{\sigma_{s^{k + l - k' - 1} (u')}^*} & & s^{k + l - k'} (u') \ar[r] & \cdots}}
\]
(recall that if $k' = k + l - 1$, i.e.\ $s^{k + l - k' - 1} (u') = u'$, then $v \leq u'$). It is either a double map representing a quasi-graph map (if $s^l (u) = s^{k + l - k' - 1} (u')$) or a singleton double map (otherwise) -- again the above statements hold up to sign. Note that the latter situation can only occur when $k' = k + l - 1$, hence $s^{k + l - k' - 1} (u') = u'$, and one deduces that $u' < s^l (u)$ in this case (we use condition~\eqref{L2}, if $l = 0$).

Finally assume that condition~\eqref{R2} is satisfied. In this case the morphism $\psi_{C', C}$ can be represented by the following diagram
\[
\vcenter{\xymatrix{& v \ar[rr]^{\varsigma_{v, s^l (u)}^*} \ar[d]_{(-1)^{k + l} \cdot \varsigma_{v, s^{l'} (u')}^*} & & s^l (u) & \cdots \ar[l]
\\
\cdots \ar[r] & s^{l'} (u') & & v' \ar@{-->}[ll]_-{\varsigma_{v', s^{l'} (u')}^*}}}
\]
It is either a single map representing a quasi-graph map (if $s^l (u) = s^{l'} (u')$) or a singleton single map (otherwise).

The main result of this subsection is the following.

\begin{prop}
The maps $\varphi_{C_{k', u', l', v'}, C_{k, u, l, v}}$, $(k, u, l, v) \in \calC$, $(k', u', l', v') \in \Phi_{k, u, l, v}$, together with the maps $\psi_{C_{k', u', l', v'}, C_{k, u, l, v}}$, $(k, u, l, v) \in \calC$, $(k', u', l', v') \in \Psi_{k, u, l, v}$, form a basis of $\bigoplus_{(k, u, l, v), (k', u', l', v') \in \calC} \Hom (C_{k, u, l, v}, C_{k', u', l', v'})$.
\end{prop}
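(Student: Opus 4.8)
The plan is to reduce the whole computation to the known description of morphisms in $\bK^b(\proj\Lambda)$ coming from the theory of homotopy strings. Since $\Lambda$ is a gentle algebra, by~\cite{ArnesenLakingPauksztello} (and the earlier references~\cites{BekkertMerklen, Bobinski2011a}) a basis of $\Hom(X_{k,\theta}, X_{k',\theta'})$ in $\bK^b(\proj\Lambda)$ is given by the \emph{graph maps}, \emph{singleton single maps}, \emph{singleton double maps}, and \emph{single/double maps representing quasi-graph maps} between the homotopy strings $(k,\theta)$ and $(k',\theta')$; moreover there is a combinatorial recipe describing exactly when each such map exists. First I would recall precisely this classification and fix notation for it. Then, using the dictionary established in the proof of the previous proposition — which matches each complex $C_{k,u,l,v}$ with a complex $X_{k,\theta}$ for an explicit homotopy string $\theta$ — I would translate the combinatorial conditions defining $\Phi_{k,u,l,v}$ and $\Psi_{k,u,l,v}$ into the language of homotopy strings, and check that the morphisms $\varphi_{C',C}$ and $\psi_{C',C}$, as given by their unfolded diagrams, are exactly the graph maps / singleton maps / (quasi-graph) single and double maps, with the signs recorded. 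This is essentially a bookkeeping translation: each of the several display diagrams defining $\varphi_{C',C}$ and $\psi_{C',C}$ is tagged in the text with its type in the sense of~\cite{ArnesenLakingPauksztello}, so the content is to verify that nothing is omitted and nothing is double-counted.

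Concretely I would proceed as follows. Step 1: state the basis theorem for morphisms between homotopy-string complexes over a gentle algebra in the form I need, citing~\cite{ArnesenLakingPauksztello}. Step 2: for a fixed pair $C = C_{k,u,l,v}$, $C' = C_{k',u',l',v'}$ with associated homotopy strings, enumerate all graph maps from $(k,\theta)$ to $(k',\theta')$ and all singleton single/double maps, splitting into the cases according to the relative position of the degree windows $[k,k+l]$ and $[k',k'+l']$ and whether the right ends are of ``$v = s^l(u)$'' type or ``$v < s^l(u)$'' type. Step 3: match each such map with exactly one of the diagrams defining $\varphi_{C',C}$, reading off that the index $(k',u',l',v')$ lies in $\Phi_{k,u,l,v}$ precisely when the corresponding graph/singleton map exists; here the four bullet conditions defining $\Phi_{k,u,l,v}$ (the degree-window inclusion $k' \le k \le k'+l' \le k+l$, the agreement $s^{l'+1}(u') = s^{k'+l'+1-k}(u)$, and the two boundary inequalities in the cases $k'=k$ and $k+l=k'+l'$) will fall out as exactly the conditions for a graph map along the overlapping part of the two strings plus compatibility of the dangling ends. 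Step 4: do the analogous matching for the $\psi_{C',C}$, where the homotopy-string side produces the (quasi-graph) single maps, (quasi-graph) double maps, singleton single maps and singleton double maps; the conditions $\preceq_L$, $\preceq_R$ and $s^{l+1}(u) = s^{k+l-k'}(u')$ defining $\Psi_{k,u,l,v}$ translate, in the three listed subcases, into the existence conditions for precisely those maps. Step 5: conclude that $\{\varphi_{C',C}\} \cup \{\psi_{C',C}\}$ is in bijection with the standard basis, hence is itself a basis of $\bigoplus \Hom(C_{k,u,l,v}, C_{k',u',l',v'})$, noting that the $\varphi$'s and $\psi$'s are automatically linearly independent because they land in distinct, explicitly identified types of the homotopy-string basis (graph vs.\ singleton vs.\ quasi-graph) and, within a type, correspond to distinct string-combinatorial data.

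The main obstacle I expect is Step 2–Step 4 done \emph{uniformly}: the homotopy strings attached to the $C_{k,u,l,v}$ come in several shapes (stationary, a ``descending'' string $\alpha_u\cdots\alpha_{u+l}$, and a ``turn-around'' string $\alpha_u\cdots\alpha_{u+l}\ol\alpha_{-1}\cdots\ol\alpha_v$), and the relevant overlap of two such strings splits into many sub-cases depending on how the two windows $[k,k+l]$, $[k',k'+l']$ interleave and on the behaviour of the map $s$ near the vertex $0$ (where $s$ ceases to be injective). Keeping the sign conventions consistent with~\cite{ArnesenLakingPauksztello} across all these cases, and making sure the degenerate situations ($l=0$ or $l'=0$, the right end sitting exactly at $s^l(u)$, the boundary vertex $v\le u'$ constraints) are each hit by exactly one diagram and not by two, is the delicate part; everything else is routine comparison of unfolded diagrams with the definition of graph and singleton maps. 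A clean way to organize the write-up is a table, one row per case, listing the homotopy-string configuration, the type of the resulting basis morphism, and the corresponding $\varphi$ or $\psi$ diagram, and then simply observing that the rows exhaust both lists.
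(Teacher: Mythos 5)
Your proposal is correct and follows essentially the same route as the paper: the paper's own proof explicitly offers the appeal to \cite{ArnesenLakingPauksztello}*{Theorem~3.15} (graph maps, singleton single/double maps, and quasi-graph map representatives form a basis) together with the case-by-case matching of the $\varphi$ and $\psi$ diagrams to these types as its preferred alternative to direct calculation, and leaves the same lengthy verification to the reader that you outline in Steps 2--5. Your observation that each diagram in the text is already tagged with its type in the sense of \cite{ArnesenLakingPauksztello}, and your identification of the delicate points (degenerate cases $l=0$, the behaviour of $s$ near the vertex $0$, and avoiding double-counting), accurately reflect where the real work in the omitted verification lies.
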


\begin{proof}
The claim can be verified by lengthy and technical, but relatively straightforward direct calculations, which we leave to the reader. Alternatively, one can use~\cite{ArnesenLakingPauksztello}*{Theorem~3.15} which states that a basis of $\bigoplus_{(k, u, l, v), (k', u', l', v') \in \calC} \Hom (C_{k, u, l, v}, C_{k', u', l', v'})$ is formed by the singleton single maps, the singleton double maps, the graph maps and representatives of the quasi-graph maps. Again we leave a (lengthy and technical) verification that the above list contains all the required maps to the interested reader (one can also use results of~\cite{ArnesenLakingPauksztello}*{Section~7} to facilitate calculations).
\end{proof}

We record some consequences.

\begin{coro} \label{coro:dimension}
Let $(k, u, l, v), (k', u', l', v') \in \calC$, $C := C_{k, u, l, v}$ and $C' := C_{k', u', l', v'}$.
\begin{enumerate}

\item
If $(k', u', l', v') \not \in \Phi_{k, u, l, v} \cup \Psi_{k, u, l, v}$, then $\Hom (C, C') = 0$.

\item
If $(k', u', l', v') \in \Phi_{k, u, l, v} \setminus \Psi_{k, u, l, v}$, then $\dim_\Bbbk \Hom (C, C') = 1$, with basis formed by $\varphi_{C', C}$.

\item
If $(k', u', l', v') \in \Psi_{k, u, l, v} \setminus \Phi_{k, u, l, v}$, then $\dim_\Bbbk \Hom (C, C') = 1$, with basis formed by $\psi_{C', C}$.

\item
If $(k', u', l', v') \in \Phi_{k, u, l, v} \cap \Psi_{k, u, l, v}$, then $\dim_\Bbbk \Hom (C, C') = 2$, with basis formed by $\varphi_{C', C}$ and $\psi_{C', C}$.

\end{enumerate}
\end{coro}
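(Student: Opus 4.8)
The statement to prove, Corollary~\ref{coro:dimension}, reads off directly from the basis description in the preceding proposition, so the plan is essentially bookkeeping. First I would note that the preceding proposition asserts that the collection $\{\varphi_{C', C} : (k', u', l', v') \in \Phi_{k, u, l, v}\} \cup \{\psi_{C', C} : (k', u', l', v') \in \Psi_{k, u, l, v}\}$, ranging over all pairs in $\calC$, is a basis of the total Hom-space. Since the Hom-spaces $\Hom(C_{k, u, l, v}, C_{k', u', l', v'})$ for distinct target quadruples are direct summands of this total space, and the constructed maps $\varphi_{C', C}$ and $\psi_{C', C}$ each land in the single summand indexed by $(k, u, l, v)$ on the source side and $(k', u', l', v')$ on the target side, the basis of the total space restricts to a basis of each individual $\Hom(C, C')$: namely the subset of those $\varphi$'s and $\psi$'s whose indices match the chosen pair. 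So for a fixed pair $(k, u, l, v), (k', u', l', v')$, a basis of $\Hom(C, C')$ consists precisely of $\varphi_{C', C}$ if $(k', u', l', v') \in \Phi_{k, u, l, v}$, together with $\psi_{C', C}$ if $(k', u', l', v') \in \Psi_{k, u, l, v}$.

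From this, parts (1)--(4) follow by a trivial case split on membership of $(k', u', l', v')$ in $\Phi_{k, u, l, v}$ and in $\Psi_{k, u, l, v}$. If it lies in neither, the basis of $\Hom(C, C')$ is empty, giving (1). If it lies in $\Phi$ but not $\Psi$ (resp.\ $\Psi$ but not $\Phi$), the basis is the single element $\varphi_{C', C}$ (resp.\ $\psi_{C', C}$), giving (2) (resp.\ (3)). If it lies in both, the basis has the two elements $\varphi_{C', C}$ and $\psi_{C', C}$, giving (4); here one only needs the fact that these two maps are genuinely distinct basis vectors, which is already contained in the statement of the proposition (they are listed as separate elements of a basis, hence in particular linearly independent).

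The one point deserving a sentence of care is exactly that last linear independence in case (4): a priori one might worry that for some overlap quadruple $(k', u', l', v') \in \Phi_{k, u, l, v} \cap \Psi_{k, u, l, v}$ the two constructed maps $\varphi_{C', C}$ and $\psi_{C', C}$ coincide or are proportional in $\bK^b(\proj\Lambda)$. But this cannot happen, since the preceding proposition lists all the $\varphi$'s and all the $\psi$'s together as a \emph{basis} of the total Hom-space, so no two of them can be homotopic up to scalar; restricting to the summand $\Hom(C, C')$ preserves this. (Concretely, one can also see the distinction from the unfolded diagrams: the $\varphi$-type maps are supported as ``downward'' graph/singleton maps while the $\psi$-type maps, carrying the signs $(-1)^{k+l}$, represent quasi-graph maps or point in the opposite direction, so they are visibly not scalar multiples of one another -- but invoking the proposition is cleaner.)

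I do not anticipate any real obstacle: the corollary is a formal consequence of the proposition via the direct-sum decomposition of the Hom-bifunctor, and the only substantive input -- that the listed maps really do form a basis -- has been granted. The proof can therefore be written in a few lines, with the case analysis on $\Phi$ versus $\Psi$ membership made explicit for the four parts.
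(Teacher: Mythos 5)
Your proposal is correct and matches the paper's (implicit) reasoning: the paper states this corollary without proof as an immediate consequence of the preceding basis proposition, and your argument -- that the basis of the total Hom-space splits across the direct summands $\Hom(C,C')$, followed by the four-way case split on membership in $\Phi_{k,u,l,v}$ and $\Psi_{k,u,l,v}$ -- is exactly the bookkeeping the authors leave to the reader.
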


\section{A model of the homotopy category of $\proj \Lambda (n, m)$} \label{sect:category}

Throughout this section we still put $\Lambda := \Lambda (n, m)$, for $n \in \bbN_+$ and $m \in \bbN$. The aim of this section is to present a model of the category $\bK^b (\proj \Lambda)$. We will follow the description presented in~\cite{Bobinski2011b}*{Section~5}, given there without a proof. For convenience of the reader, we include the main steps of the proof below. We also prove necessary facts about irreducible morphisms in $\bK^b (\proj \Lambda)$ here.

\subsection{The category} \label{subsect:category}

We are now ready to describe the full subcategory $\ind \bK^b (\proj \Lambda)$ of $\bK^b (\proj \Lambda)$ formed by the indecomposable objects. We will present it as the quotient $\Bbbk \Gamma / \calI$ of the path category $\Bbbk \Gamma$ of a quiver $\Gamma$ by an ideal $\calI$.

First we put
\[
\Gamma_0 := \{ (i, a, b) : \text{$i \in [0, n - 1]$, $a, b \in \bbZ$, $a \leq b + \delta_{i, 0} \cdot m$} \},
\]
where $\delta_{x, y}$ is the Kronecker delta. For each $V = (i, a, b) \in \Gamma_0$, let
\begin{gather*}
\calF_V := \{ (i, x, y) : \text{$x \in [a, b + \delta_{i, 0} \cdot m]$, $y \in [b, \infty)$} \}
\\
\intertext{and}
\calG_V := \{ (i + 1, x, y) : \text{$x \in (-\infty, a + \delta_{i, n - 1} \cdot m]$, $y \in [a, b + \delta_{i, 0} \cdot m]$} \},
\end{gather*}
where $i + 1$ is calculated modulo $n$. Then
%\begin{multline*}
\[
\Gamma_1 := \{ f_{U, V} \colon V \to U : \text{$V \in \Gamma_0$, $U \in \calF_V$, $U \neq V$} \} \cup \{ g_{U, V} \colon V \to U : \text{$V \in \Gamma_0$, $U \in \calG_V$} \}.
\]
%\end{multline*}
Finally, let $\calI$ be the ideal generated by the relations
\begin{align*}
& f_{W, U} \circ f_{U, V} - f_{W, V}, & & V \in \Gamma_0, \, U \in \calF_V, \, U \neq V, \, W \in \calF_U, \, W \neq U,
\\
& g_{W, U} \circ f_{U, V} - g_{W, V}, & & V \in \Gamma_0, \, U \in \calF_V, \, U \neq V, \, W \in \calG_U,
\\
& f_{W, U} \circ g_{U, V} - g_{W, V}, & & V \in \Gamma_0, \, U \in \calG_V, \, W \in \calF_U, \, W \neq U,
\\
& g_{W, U} \circ g_{U, V}, & & V \in \Gamma_0, \, U \in \calG_V, \, W \in \calG_U,
\end{align*}
where $f_{W, V} := 0$ ($g_{W, V} := 0$) if $W \not \in \calF_V$ ($W \not \in \calG_V$ respectively).

We will denote by $f_{V, V}$ the identify morphism $\Id_V$, for $V \in \Gamma_0$. Obviously $f_{U, V} \circ f_{V, V} = f_{U, V}$ = $f_{U, U} \circ f_{U, V}$, $g_{W, V} \circ f_{V, V} = g_{W, V} = f_{W, W} \circ g_{W, V}$, for all $V \in \Gamma_0$, $U \in \calF_U$, $W \in \calG_V$. If $V \in \Gamma_0$ and $U \in \calF_V$, then we put
\[
g_{U, V}' :=
\begin{cases}
g_{U, V} & \text{if $U \in \calG_V$},
\\
0 & \text{otherwise}.
\end{cases}
\]
Observe that $n = 1$ provided $g_{U, V}' \neq 0$.

We will often use the following observation about the morphism spaces in $\Bbbk \Gamma / \calI$ without reference.

\begin{lemm} \label{lemm:dimhom}
Let $V, U \in \Gamma_0$.
\begin{enumerate}
\item
If $U \not \in \calF_V \cup \calG_V$, then $\Hom (V, U) = 0$.
\item
If $U \in \calF_V \setminus \calG_V$, then $\dim_\Bbbk \Hom (V, U) = 1$, with basis formed by $f_{U, V}$.
\item
If $U \in \calG_V \setminus \calF_V$, then $\dim_\Bbbk \Hom (V, U) = 1$, with basis formed by $g_{U, V}$.
\item
If $U \in \calF_V \cap \calG_V$, then $\dim_\Bbbk \Hom (V, U) = 2$, with basis formed by $f_{U, V}$ and $g_{U, V}$.
\end{enumerate}
In particular, if $U \in \calF_V$, then $f_{U, V}$ and $g_{U, V}'$ span $\Hom (V, U)$.
\end{lemm}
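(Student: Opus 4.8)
The plan is to produce an explicit $\Bbbk$-basis of $\Bbbk\Gamma/\calI$, namely the identities $\Id_V$ ($V\in\Gamma_0$) together with the arrows $f_{U,V},g_{U,V}$ of $\Gamma$, and then read off all four assertions, as well as the final sentence, by restricting to morphisms $V\to U$. The basis is obtained in two stages: spanning (the easy direction, yielding the upper bounds on the Hom‑dimensions) and linear independence (the substantive one). For spanning, orient the four families of defining relations of $\calI$ from left to right as rewriting rules that replace a composite of two arrows by an arrow or by $0$: $f_{W,U}\circ f_{U,V}\rightsquigarrow f_{W,V}$, $g_{W,U}\circ f_{U,V}\rightsquigarrow g_{W,V}$, $f_{W,U}\circ g_{U,V}\rightsquigarrow g_{W,V}$, and $g_{W,U}\circ g_{U,V}\rightsquigarrow 0$, where $f_{W,V}$ (resp.\ $g_{W,V}$) is read as $0$ when $W\notin\calF_V$ (resp.\ $W\notin\calG_V$). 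Every composite of two composable arrows of $\Gamma$ is the left‑hand side of exactly one of these rules — the four cases being told apart by the types of the two arrows — so any path of length $\ge 2$ is reducible, and since each rule strictly decreases length the process terminates. Tracing the rules, a path of positive length made only of $f$‑arrows reduces to $f_{U,V}$, a path containing exactly one $g$‑arrow reduces to $g_{U,V}$, and a path containing at least two $g$‑arrows reduces to $0$; hence $\Hom(V,U)$ is spanned by $f_{U,V}$ and $g_{U,V}$. This settles part~(1) completely (both symbols then denote $0$), gives the bounds $\le 1,\le 1,\le 2$ in parts~(2)--(4), and gives the final sentence as well (when $U\in\calF_V$, $\Bbbk f_{U,V}+\Bbbk g_{U,V}=\Bbbk f_{U,V}+\Bbbk g_{U,V}'$).

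For linear independence, observe that $\{\Id_V : V\in\Gamma_0\}\cup\Gamma_1$ is exactly the set of monomials that are irreducible for this terminating rewriting system (the left‑hand sides all have length two, so a path is irreducible iff it has length $\le 1$). By the Diamond Lemma this set is a $\Bbbk$‑basis of $\Bbbk\Gamma/\calI$ once every overlap ambiguity is resolvable; in particular this forces $f_{U,V}\ne g_{U,V}$ in the situation of part~(4), as these are distinct elements of $\Gamma_1$. Since all rules have left‑hand sides of length exactly two, the ambiguities are precisely the overlaps of length‑three composites $\gamma_3\circ\gamma_2\circ\gamma_1$ of arrows, reduced by first applying a rule to $\gamma_3\circ\gamma_2$ or to $\gamma_2\circ\gamma_1$; there are eight cases according to the types of $\gamma_1,\gamma_2,\gamma_3$. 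In each case resolvability unwinds to a short list of combinatorial statements about the sets $\calF_V$ and $\calG_V$ of ``transitivity with $0$‑fall‑back'' type — for instance, for the all‑$f$ overlap one needs that $W\in\calF_U$, $U\in\calF_V$, $X\in\calF_W$, and $W\notin\calF_V$ together force $X\notin\calF_V$, which is immediate from the defining inequalities of the vertices. Carrying out all eight checks yields the desired basis, and restricting to morphisms $V\to U$ gives precisely the stated bases and dimensions in~(2)--(4).

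The main obstacle is this confluence verification: a finite but laborious case analysis, complicated by the Kronecker deltas $\delta_{i,0}$ and $\delta_{i,n-1}$ (which single out the two ends of the linear part of $Q(n,m)$), by the index $i+1$ being taken modulo $n$ — so that for $n=1$ the sets $\calF_V$ and $\calG_V$ overlap and the $g$‑arrows become loops, which is exactly where the two‑dimensional Hom‑spaces of part~(4) occur and which has to be handled on its own — and by the asymmetry between the second coordinate of a vertex (bounded below in the definitions of $\calF_V,\calG_V$) and the third (bounded above). If one prefers to avoid the overlap bookkeeping, an alternative route to the same lower bounds is to exhibit, for each pair $(V,U)$ covered by~(2)--(4), an explicit ``thin'' representation of $\Bbbk\Gamma/\calI$ — one‑dimensional at the relevant vertices, with arrows acting as the identity or as zero — on which $f_{U,V}$ and $g_{U,V}$ act as $\Bbbk$‑linearly independent maps; this trades the eight overlap checks for a comparable amount of case‑checking.
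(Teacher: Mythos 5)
Your proof is correct, but it reaches the basis by a genuinely different route than the paper. The paper grades the morphism spaces of $\Bbbk \Gamma$ by the number of $g$-arrows occurring in a path; since all four families of relations are homogeneous, the grading descends to $\Bbbk \Gamma / \calI$, the components of degree $\geq 2$ vanish (your observation that any path with two or more $g$-arrows rewrites to zero), and the degree-$0$ and degree-$1$ components are spanned by $[f_{U, V}]$ and $[g_{U, V}]$ respectively. This reduces everything to showing that $[f_{U, V}]$ and $[g_{U, V}]$ are individually nonzero --- the grading already separates them, so no joint independence check is needed --- which the paper does by exhibiting an explicit spanning set of the degree-$0$ part of $\calI (V, U)$ and running an upper-triangularity (leading-term) argument with respect to an ordering of the sequences by length. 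You instead invoke the Diamond Lemma: termination is immediate, and the completeness of the set of identities and arrows as a basis follows once the eight families of length-three overlap ambiguities are shown to be resolvable. The two approaches trade one case analysis for another: the paper avoids confluence entirely but must justify its spanning set of $\calI^{(0)} (V, U)$ and the ``analogous'' degree-$1$ statement; you avoid that but must check the eight overlaps, of which the delicate ones are exactly those containing at most one $g$-arrow, where a fall-back to $0$ on one side of an association must be matched on the other. These do all go through, using the monotonicity of the vertex coordinates along $f$-arrows and the inequalities defining $\calF_V$ and $\calG_V$ (including the $\delta_{i, 0}$ and $\delta_{i, n - 1}$ terms), while every overlap with at least two $g$-arrows reduces to $0$ on both sides; so your argument is complete at the same level of deferred verification that the paper itself adopts.
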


\begin{proof}
In the paper, we usually do not distinguish between morphisms in $\Bbbk \Gamma$ (in particular, arrows in $\Gamma$) and the corresponding morphisms in $\Bbbk \Gamma / \calI$, since it should not lead to misunderstanding. However, in this proof, for a morphism $\chi$ in $\Bbbk \Gamma$ we will denote by $[\chi]$ the corresponding morphism in $\Bbbk \Gamma / \calI$.

We introduce a grading in the homomorphism spaces in $\Bbbk \Gamma$ by putting $|f_{U, V}| = 0$ and $|g_{W, V}| = 1$, for $V \in \Gamma_0$, $U \in \calF_V$, and $W \in \calG_U$. Since the relations defining the ideal $\calI$ are homogenous with respect to the above grading, we have the induced grading in the homomorphism spaces in $\Bbbk \Gamma / \calI$. If, for $V, U \in \Gamma_0$ and $d \in \bbN$, $\Hom^{(d)} (V, U)$ denotes the space of morphisms $V \to U$ in $\Bbbk \Gamma / \calI$ of degree $d$ (with respect to the above grading), then we easily see that $\Hom^{(d)} (V, U) = 0$, for $d \geq 2$. Moreover, $\Hom^{(0)} (V, U)$ is either $0$ (if $U \not \in \calF_V$) or is spanned by $[f_{U, V}]$ (if $U \in \calF_V$). Similarly, $\Hom^{(1)} (V, U)$ is either $0$ (if $U \not \in \calG_V$) or is spanned by $[g_{U, V}]$ (if $U \in \calG_V$). Thus in order to finish the proof, we need to show that $[f_{U, V}] \neq 0$ and $[g_{W, V}] \neq 0$, for all $V \in \Gamma_0$, $U \in \calF_V$, and $W \in \calG_U$.

Fix $V \in \Gamma_0$ and $U \in \calF_V$. Let $\calP := \calP_{U, V}$ be the set of the sequences $(W_0, \ldots, W_l)$ such that $W_0 = V$, $W_l = U$, and $W_p \in \calF_{W_{p - 1}}$ and $W_p \neq W_{p - 1}$, for all $p \in [1, l]$. Note that the assumption $U \in \calF_V$ implies that $W_p \in \calF_{W_q}$, for all $q \leq p$. For $(W_0, \ldots, W_l) \in \calP$, we put
\[
\omega_{W_0, \ldots, W_l} := f_{W_l, W_{l - 1}} \circ \cdots \circ f_{W_1, W_0}
\]
(in particular, if $V = U$, then $\omega_{V} := \Id_V$). If, moreover, $l \geq 2$ and $p \in [0, l - 2]$, then we put
\[
\rho_{W_0, \ldots, W_l, p} := \omega_{W_0, \ldots, W_l} - \omega_{W_0, \ldots, W_p, W_{p + 2}, \ldots, W_l}.
\]

Let $(\Bbbk \Gamma)^{(0)} (V, U)$ the space of morphisms $V \to U$ in $\Bbbk \Gamma$ of degree $0$ and $\calI^{(0)} (V, U) := (\Bbbk \Gamma)^{(0)} (V, U) \cap \calI$. One easily observes that $(\Bbbk \Gamma)^{(0)} (V, U)$ is spanned by the morphisms $\omega_{W_0, \ldots, W_l}$, for $(W_0, \ldots, W_l) \in \calP$, while $\calI^{(0)} (V, U)$ is spanned by the morphisms $\rho_{W_0, \ldots, W_l, p}$, for $(W_0, \ldots, W_l) \in \calP$ with $l \geq 2$ and $p \in [0, l - 2]$. In fact, we can find a smaller spanning set of $\calI^{(0)} (V, U)$. Namely, $\calI^{(0)} (V, U)$ is spanned by the morphisms $\rho_{W_0, \ldots, W_l, 0}$, for $(W_0, \ldots, W_l) \in \calP$ with $l \geq 2$, which follows from the equality
\[
\rho_{W_0, \ldots, W_l, p} = \sum_{q \in [1, p + 1]} \rho_{W_0, W_q, \ldots W_l, 0} - \sum_{q \in [1, p]} \rho_{W_0, W_q, \ldots, W_p, W_{p + 2}, \ldots, W_l, 0}.
\]
It is an easy exercise in linear algebra to show that
\[
\calI^{(0)} (V, U) + \Bbbk \cdot f_{U, V} = (\Bbbk \Gamma)^{(0)} (V, U) \qquad \text{and} \qquad \calI^{(0)} (V, U) \cap \Bbbk \cdot f_{U, V} = 0
\]
(we use here, that if we order the elements of $\calP$ in such a way that longer sequences precede shorter ones, then $\omega_{W_0, \ldots, W_l}$ is the leading term of $\rho_{W_0, \ldots, W_l, 0}$). Now the Second Isomorphism Theorem implies that the canonical map $\Bbbk \cdot f_{U, V} \to (\Bbbk \Gamma)^{(0)} (V, U) / \calI^{(0)} (V, U)$ is an isomorphism, hence in particular $[f_{U, V}] \neq 0$.

The proof that $[g_{W, V}] \neq 0$, for all $V \in \Gamma_0$ and $W \in \calG_V$, is analogous.
\end{proof}

The following proposition is the main result of this subsection and describes the aforementioned equivalence between $\Bbbk \Gamma / \calI$ and $\ind \bK^b (\proj \Lambda)$.

\begin{prop} \label{prop:category}
Let
\begin{multline*}
\Theta (i, p \cdot (m + n) + r + i, q \cdot (m + n) + t + i - \delta_{i, 0} \cdot m)
\\
:= \begin{cases}
C_{- q \cdot n - t - i, -t, (q - p) \cdot n + (t - r), -r} & \text{if $r \in [-n + 1, 0]$ and $t \in [- n + 1, -1]$},
\\
C_{- q \cdot n - i, - m + t, (q - p) \cdot n - r, -r} & \text{if $r \in [- n + 1, 0]$, $t \in [0, m]$},
\\
& \qquad \text{and $(q - p) \cdot n - r > 0$},
\\
C_{- q \cdot n - i, - m + t, 0, -m + t} & \text{if $r \in [- n + 1, 0]$, $t \in [0, m]$},
\\
& \qquad \text{and $(q - p) \cdot n - r = 0$},
\\
C_{- q \cdot n - t - i, - t, (q - p) \cdot n + t, - m - 1 + r} & \text{if $r \in [1, m]$ and $t \in [-n + 1, -1]$},
\\
C_{- q \cdot n - i, - m + t, (q - p) \cdot n, - m - 1 + r} & \text{if $r \in [1, m]$ and $t \in [0, m]$},
\end{cases}
\end{multline*}
for $i \in [0, n  - 1]$, $p, q \in \bbZ$, $r, t \in [- n + 1, m]$, such that $p \cdot (m + n) + r \leq q \cdot (m + n) + t$. If
\[
\Theta (f_{U, V}) := \varphi_{\Theta (U), \Theta (V)} \qquad \text{and} \qquad \Theta (g_{W, V}) := \psi_{\Theta (W), \Theta (V)},
\]
for $V \in \Gamma_0$, $U \in \calF_V$, $W \in \calG_V$, then $\Theta$ induces an equivalence $\Bbbk \Gamma / \calI \xrightarrow{\sim} \ind \bK^b (\proj \Lambda)$.
\end{prop}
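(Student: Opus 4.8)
The plan is to establish, in this order, that $\Theta$ is well defined as a functor $\Bbbk\Gamma/\calI \to \ind\bK^b(\proj\Lambda)$, that it is essentially surjective, and that it is fully faithful. Since the source is a quiver with relations, well-definedness requires two things: first, that for $V \in \Gamma_0$, $U \in \calF_V$ and $W \in \calG_V$ the quadruples $\Theta(U)$ and $\Theta(W)$ actually lie in $\Phi_{\Theta(V)}$ and $\Psi_{\Theta(V)}$ (so that $\varphi_{\Theta(U),\Theta(V)}$ and $\psi_{\Theta(W),\Theta(V)}$ are defined); and second, that the four families of relations generating $\calI$ are sent to zero. The two previous propositions already describe the objects and the morphism spaces of $\ind\bK^b(\proj\Lambda)$, so all three tasks come down to matching the combinatorics of $\Gamma$ with that of $\calC$.

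First I would check that the assignment on objects is a well-defined bijection $\Gamma_0 \to \calC$. Every $V = (i,a,b) \in \Gamma_0$ admits a unique normal form $a = p\cdot(m+n)+r+i$, $b = q\cdot(m+n)+t+i-\delta_{i,0}\cdot m$ with $p,q\in\bbZ$ and $r,t\in[-n+1,m]$, and one checks (a division-with-remainder computation) that the quadruple produced by $\Theta$ genuinely lies in $\calC$; the five cases in the definition of $\Theta$ correspond to the signs of $r$ and $t$ and to the dichotomy $v = s^l(u)$ versus $v < s^l(u)$ built into the definition of $\calC$. Inverting these formulas recovers $(i,p,q,r,t)$ from a given $(k,u,l,v)\in\calC$, which yields surjectivity, hence -- once $\Theta$ is known to be a functor -- essential surjectivity; injectivity is immediate from uniqueness of the normal form.

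The core of the argument is the following dictionary. Under the bijection above, $U \in \calF_V$ if and only if $\Theta(U) \in \Phi_{\Theta(V)}$, and $W \in \calG_V$ if and only if $\Theta(W) \in \Psi_{\Theta(V)}$; that is, the inequalities cutting out $\calF_V$ and $\calG_V$ in the coordinates $(i,x,y)$ translate precisely into the conditions defining $\Phi$ and $\Psi$ (the relations $\preceq_L$ and $\preceq_R$ together with the matching condition $s^{l+1}(u) = s^{k+l-k'}(u')$). Moreover, the basic morphisms compose according to
\begin{gather*}
\varphi_{C'',C'}\circ\varphi_{C',C} = \varphi_{C'',C}, \qquad \psi_{C'',C'}\circ\varphi_{C',C} = \psi_{C'',C},
\\
\varphi_{C'',C'}\circ\psi_{C',C} = \psi_{C'',C}, \qquad \psi_{C'',C'}\circ\psi_{C',C} = 0,
\end{gather*}
with the convention that $\varphi_{C'',C}$ (resp.\ $\psi_{C'',C}$) is read as $0$ when the relevant quadruple is not in $\Phi$ (resp.\ $\Psi$) -- exactly the convention $f_{W,V}=0$, $g_{W,V}=0$ used in writing the generators of $\calI$. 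These identities are verified by composing the explicit unfolded diagrams component by component, using the calculus of graph maps, singleton single and double maps, and representatives of quasi-graph maps from \cite{ArnesenLakingPauksztello}; the vanishing $\psi\circ\psi=0$ reflects the fact that composing two consecutive ``wrap-around'' morphisms runs into one of the zero relations of $\Lambda$, while the mixed identities $\psi\circ\varphi=\psi$ and $\varphi\circ\psi=\psi$ require carefully tracking the signs $(-1)^{k+l}$ occurring in the definition of $\psi$. I expect this case analysis -- keeping track of which of the finitely many shapes of $C$ and which subcase of the $\varphi$/$\psi$ diagrams applies, together with the signs -- to be the main obstacle.

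Granting the dictionary, the conclusion is short. The four composition identities say exactly that $\Theta$ kills each generator of $\calI$, so $\Theta$ descends to a functor $\Bbbk\Gamma/\calI \to \ind\bK^b(\proj\Lambda)$ (one also notes $\Theta(f_{V,V}) = \varphi_{\Theta(V),\Theta(V)} = \Id_{\Theta(V)}$, since this graph map has all components equal to the identity). Essential surjectivity follows from the second step together with the proposition identifying the complexes $C_{k,u,l,v}$ as representatives of the indecomposable objects of $\bK^b(\proj\Lambda)$. Finally, for $V,U\in\Gamma_0$, Lemma~\ref{lemm:dimhom} provides a basis of $\Hom(V,U)$ consisting of those among $f_{U,V}$ and $g_{U,V}$ that are nonzero, while Corollary~\ref{coro:dimension}, combined with the dictionary $\calF_V \leftrightarrow \Phi_{\Theta(V)}$, $\calG_V \leftrightarrow \Psi_{\Theta(V)}$, provides a basis of $\Hom(\Theta(V),\Theta(U))$ consisting of the corresponding nonzero ones among $\varphi_{\Theta(U),\Theta(V)}$ and $\psi_{\Theta(U),\Theta(V)}$. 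As $\Theta$ carries the first basis onto the second, it is bijective on each morphism space; hence $\Theta$ is fully faithful, and therefore an equivalence.
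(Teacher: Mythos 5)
Your proposal follows essentially the same route as the paper: well-definedness via a bijection on objects, the dictionary $\Theta(\calF_V)=\Phi_{\Theta(V)}$ and $\Theta(\calG_V)=\Psi_{\Theta(V)}$, verification of the four composition identities for $\varphi$ and $\psi$ so that the generators of $\calI$ are killed, and full faithfulness by matching the bases of Lemma~\ref{lemm:dimhom} against those of Corollary~\ref{coro:dimension}. The only caveat is that the composition identities hold only \emph{up to homotopy} (the composite chain map is homotopic, not equal, to the diagram representing $\varphi_{C'',C}$ or $\psi_{C'',C}$), a point the paper's worked example makes explicit and which your ``composing the diagrams component by component'' phrasing glosses over slightly.
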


Before giving the proof we explain how $\Theta$ on vertices of $\Gamma$ is constructed. One gets from \cite{Bobinski2011a}*{Corollary~6.3} (see also~\cite{BobinskiGeissSkowronski}*{Lemma~3.1}) that for each $i \in \{ -m \} \cup [1, n - 1]$ there exists a unique (up to scalar and radical square) irreducible map in $\bK^b (\proj \Lambda)$ starting at $P_i$ (which we identify with $C_{0, i, 0, i}$). On the other hand, we will prove (see~Corollary~\ref{coro:irr}) that there is a unique (up to scalar and radical square) irreducible map in $\Bbbk \Gamma / \calI$ starting at $V$ if and only if $V = (i, a, a - \delta_{i, 0} \cdot m)$, for some $i \in [0, n - 1]$ and $a \in \bbZ$. Thus we can start defining  $\Theta$ by putting $\Theta (0, 0, -m) := C_{0, -m, 0, -m}$. Observe that there are nonzero maps $P_{-m} \to P_1$ and $P_{i - 1} \to P_i$, $i \in [2, n- 1]$, provided $n > 1$. Since $(i, 0, 0)$ and $(i - 1, 0, - \delta_{i - 1, 0} \cdot m)$ are the only objects of the form $(j, a, a - \delta_{j, 0} \cdot m)$ with nonzero maps from $(i - 1, 0, - \delta_{i - 1, 0} \cdot m)$, by easy induction it follows that we have to put $\Theta (i, 0, 0) := C_{0, i, 0, i}$, $i \in [1, n - 1]$.

We will show in Corollary~\ref{coro:irreducible} that an irreducible map starting at $(i, a, b)$ terminates at either $(i, a, b + 1)$ or $(i, a + 1, b)$ (note that the latter object may not exist). On the other hand, using~\cite{Bobinski2011a}*{Main Theorem, Part II} we can obtain a similar result about irreducible maps in $\bK^b (\proj \Lambda)$. More precisely, if $C := C_{k, u, l, v}$ then there is an irreducible map $C \to C'$, where
\[
C' :=
\begin{cases}
C_{k - 1, u - 1, l + 1, v} & \text{if $u > 1$},
\\
C_{k - 1, -m, l + 1, v} & \text{if either $n > 1$ and $u = 1$ or $n = 1$ and $u = 0$},
\\
C_{k - 1, n - 1, l + 1, v} & \text{if $n > 1$ and $u = 0$},
\\
C_{k, u + 1, l, v} & \text{if $u < 0$}.
\end{cases}
\]
Moreover,
%if either $l > 0$ or $l = 0$ and either $-m < u = v \leq 0$ or $v + 1 < u$,
unless $l = 0$ and either $u = v \in \{ -m \} \cup [1, n - 1]$ or $v + 1 = u$, we also have an irreducible map $C \to C''$, where
\[
C'' :=
\begin{cases}
C_{k, u, l - 1, v - 1} & \text{if $l > 0$ and $v > 0$},
\\
C_{k, u, l, -m} & \text{if either $l > 0$, $v = 0$, and $m > 0$},
\\
& \qquad \text{or $l = 0$ and $-m < u = v \leq 0$},
\\
C_{k, u, l - 1, n - 1} & \text{if $l > 0$ and either $v = 0$ and $m = 0$ or $v = -1$},
\\
C_{k, u, l, v + 1} & \text{if either $l > 0$ and $v < -1$ or $l = 0$ and $v + 1 < u$}.
\end{cases}
\]
Using the above observations, one can deduce the required formula for $\Theta$.

\begin{proof}[Proof of Proposition~\ref{prop:category}]
Observe that if $p, q \in \bbZ$, $r, t \in [-n + 1, 0]$, then $p \cdot (m + n) + r \leq q \cdot (m + n) + t$ if and only if $p \cdot n + r \leq q \cdot n + t$. As a first consequence we obtain that $\Theta$ is well-defined on the vertices of $\Gamma$. Secondly, we use the above observation to show that, for each $V \in \Gamma_0$,
\begin{equation} \label{eq:category}
\Theta (\calF_V) = \Phi_{\Theta (V)} \qquad \text{and} \qquad \Theta (\calF_V) = \Psi_{\Theta (V)},
\end{equation}
where $\Phi_{C_{k, u, l, v}} := \Phi_{k, u, l, v}$ and $\Psi_{C_{k, u, l, v}} := \Psi_{k, u, l, v}$. This implies in particular that $\Theta$ is well-defined on the arrows of $\Gamma$.  Since obviously $\Theta (\Id_V) = \Theta (f_{V, V}) = \varphi_{\Theta (V), \Theta (V)} = \Id_{\Theta (V)}$, we get a functor $\Bbbk \Gamma \to \ind \bK^b (\proj \Lambda)$, which we also denote by $\Theta$. Again by checking the indices we see that $\Theta$ is dense. Moreover, formulas~\eqref{eq:category} imply that $\Theta$ is full.

%By inspecting the indices appearing in the formula for $\Theta$, one checks that $\Theta$ is well-defined. Since obviously $\Theta (f_{V, V}) = \varphi_{\Theta (V), \Theta (V)} = \Id_{\Theta (V)}$, we get a functor $\Bbbk \Gamma \to \ind \bK^b (\proj \Lambda)$, which we also denote by $\Theta$. Again by checking the indices we see that $\Theta$ is dense. Next, one shows that, for each $V \in \Gamma_0$, $\Theta (\calF_V) = \Phi_{\Theta (V)}$ and $\Theta (\calF_V) = \Psi_{\Theta (V)}$, where $\Phi_{C_{k, u, l, v}} := \Phi_{k, u, l, v}$ and $\Psi_{C_{k, u, l, v}} := \Psi_{k, u, l, v}$ (here we use that if $p, q \in \bbZ$, $r, t \in [-n + 1, 0]$, then $p \cdot (m + n) + r \leq q \cdot (m + n) + t$ if and only if $p \cdot n + r \leq q \cdot n + t$). This implies that the functor $\Theta$ is full.

In order to show that $\Theta$ induces a functor $\Bbbk \Gamma / \calI \to \ind \bK^b (\proj \Lambda)$, we have to verify that the relations defining the ideal $\calI$ are satisfied, i.e.\
\begin{align*}
\varphi_{\Theta (W), \Theta (U)} \circ \varphi_{\Theta (U), \Theta (V)} & = \varphi_{\Theta (W), \Theta (V)}, & & V \in \Gamma_0, \, U \in \calF_V, \, W \in \calF_U, \,
\\
\psi_{\Theta (W), \Theta (U)} \circ \varphi_{\Theta (U), \Theta (V)} & = \psi_{\Theta (W), \Theta (V)}, & & V \in \Gamma_0, \, U \in \calF_V, \, W \in \calG_U,
\\
\varphi_{\Theta (W), \Theta (U)} \circ \psi_{\Theta (U), \Theta (V)} & = \psi_{\Theta (W), \Theta (V)}, & & V \in \Gamma_0, \, U \in \calG_V, \, W \in \calF_U, \,
\\
\psi_{\Theta (W), \Theta (U)} \circ \psi_{\Theta (U), \Theta (V)} & = 0, & & V \in \Gamma_0, \, U \in \calG_V, \, W \in \calG_U,
\end{align*}
where $\varphi_{C', C} := 0$ ($\psi_{C', C} := 0$) if $C' \not \in \Phi_C$ ($C' \not \in \Psi_C$, respectively). This is done by simple, but lengthy case by case analysis, so we only give an example here, which will also illustrate that in general these relations hold only up to homotopy.

Take $V \in \Gamma_0$, $U \in \calF_V$ and $W \in \calG_U$, and write
\begin{gather*}
V = (i, p \cdot (m + n) + r + i, q \cdot (m + n) + t + i - \delta_{i, 0} \cdot m),
\\
U = (i, p' \cdot (m + n) + r' + i, q' \cdot (m + n) + t' + i - \delta_{i, 0} \cdot m),
\intertext{and}
W = (j, p'' \cdot (m + n) + r'' + j, q'' \cdot (m + n) + t'' + j - \delta_{j, 0} \cdot m),
\end{gather*}
for $i \in [0, n - 1]$, $p, q, p', q', p'', q'' \in \bbZ$, $r, t, r', t', r'', t'' \in [-n + 1, m]$, where $j := i + 1$, if $i < n - 1$, and $j := 0$, if $i = n - 1$. Assume also that $r, r', r'' \in [-n + 1, 0]$ and $t, t', t'' \in [-n + 1, -1] \cup \{ m \}$. Put
\begin{gather*}
u :=
\begin{cases}
-t & \text{if $t < 0$},
\\
0 & \text{if $t = m$},
\end{cases}
\quad
u' :=
\begin{cases}
-t' & \text{if $t' < 0$},
\\
0 & \text{if $t' = m$},
\end{cases}
\quad
u'' :=
\begin{cases}
-t'' & \text{if $t'' < 0$},
\\
0 & \text{if $t'' = m$},
\end{cases}
\\
k = - q \cdot n + u - i, \quad l := (q - p) \cdot n - u - r,
\\
k' = - q' \cdot n + u' - i, \quad l' := (q' - p') \cdot n - u' - r',
\\
k'' =
\begin{cases}
- q'' \cdot n + u'' - i - 1 & \text{if $i < n - 1$},
\\
- q'' \cdot n + u'' & \text{if $i = n - 1$},
\end{cases} \quad l'' := (q'' - p'') \cdot n - u'' - r''.
\end{gather*}
In this case the composition $\psi_{\Theta (W), \Theta (U)} \circ \varphi_{\Theta (U), \Theta (V)}$ is given by the diagram
\[
\vcenter{\xymatrix{u \ar[r] & \cdots \ar[rr] & & s^{k' + l' - k} (u) \ar[rr]^-{\sigma_{s^{k' + l' - k} (u)}^*} \ar[d]^{(-1)^{k' + l'} \cdot \sigma_{s^{k' + l' - k} (u)}^*} & & \cdots \ar[r] & s^l (u)
\\
u'' \ar[r] & \cdots \ar[rr]^-{\sigma_{s^{k' + l' - k} (u)}^*} & & s^{k' + l' - k''} (u'') \ar[rr] & & \cdots \ar[r] & s^{l''} (u'')}}.
\]
If $W \not \in \calG_U$, then $k'' < k$ or $k'' + l'' < k + l$, and the above map is homotopic to $0$. Otherwise, it is homotopic to
\[
\vcenter{\xymatrix{u \ar[r] & \cdots \ar[r] & s^l (u) \ar[d]^{(-1)^{k + l} \cdot \sigma_{s^l (u)}^*}
\\
u'' \ar[r] & \cdots \ar[r] & s^{k + l - k''} (u'') \ar[r] & \cdots \ar[r] & s^{l''} (u'')}},
\]
which is $\psi_{\Theta (W), \Theta (V)}$.

By abuse of notation denote by $\Theta$ the induced functor $\Bbbk \Gamma / \calI \to \ind \bK^b (\proj \Lambda)$. We already know that $\Theta$ is dense and full. Moreover, formula~\eqref{eq:category}, Lemma~\ref{lemm:dimhom} and Corollary~\ref{coro:dimension} imply that $\Theta$ is also faithful. Indeed, the above-mentioned facts imply that $\dim_\Bbbk \Hom (V, U) = \dim_\Bbbk \Hom (\Phi (V), \Phi (U)) < \infty$ for all objects $V$ and $U$ of $\Bbbk \Gamma / \calI$. Since we already know that, for each $V$ and $U$, the map $\Hom (V, U) \to \Hom (\Phi (V), \Phi (U))$ induced by the functor $\Phi$ is surjective, it also has to be injective as well. Consequently, $\Theta$ is an equivalence.
\end{proof}

From now on we will identify $\Bbbk \Gamma / \calI$ with its image under this equivalence, treat the vertices of $\Gamma$ as complexes of projective $\Lambda$-modules. Since $\Sigma C_{k, u, l, v} = C_{k - 1, u, l, v}$, one easily checks (using the formula for $\Theta$ from Proposition~\ref{prop:category}) that $\Sigma V = (i + 1, a + 1 + \delta_{i, n - 1} \cdot m, b + 1 + \delta_{i, 0} \cdot m)$ provided $V = (i, a, b)$.

As pseudo-identities act trivially on morphisms between projective modules, it is important to identify the vertices of $\Gamma$ corresponding to the modules $P_i$, $i \in [-m, n + 1]$. The following is an easy consequence of Proposition~\ref{prop:category}.

\begin{coro} \label{coro:projective}
Up to the equivalence $\Theta$ described in Proposition~\ref{prop:category} we have: $P_i = (0, 0, i)$, $i \in [-m, 0]$, and $P_i = (i, 0, 0)$, $i \in [1, n - 1]$.
\end{coro}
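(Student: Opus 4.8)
The plan is to invert the explicit formula for $\Theta$ from Proposition~\ref{prop:category} on the two families of stalk complexes representing the indecomposable projectives. Recall from Subsection~\ref{subsect:objects} that for $u \in [-m, n - 1]$ the complex $C_{0, u, 0, u}$ is precisely the stalk complex with $P_u$ in degree zero; so the task is to find, for each relevant $u$, the vertex of $\Gamma$ that $\Theta$ carries to $C_{0, u, 0, u}$.

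First I would record that the parametrization of $\Gamma_0$ appearing in Proposition~\ref{prop:category} is bijective: the interval $[-n + 1, m]$ has exactly $m + n$ elements, so each $(i, a, b) \in \Gamma_0$ is written uniquely in the form $(i, p (m + n) + r + i, q (m + n) + t + i - \delta_{i, 0} m)$ with $p, q \in \bbZ$ and $r, t \in [-n + 1, m]$, and the inequality $a \leq b + \delta_{i, 0} m$ defining $\Gamma_0$ translates into $p (m + n) + r \leq q (m + n) + t$. Hence, to identify the preimage of a given $C_{k, u, l, v}$ it suffices to exhibit one admissible choice of parameters, check that it lies on the correct branch of the case distinction defining $\Theta$, and read off the resulting vertex.

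For $u \in [1, n - 1]$ I would take $i := u$, $p := q := 0$ and $r := t := -u$; then $\delta_{i, 0} = 0$ and $r = t = -u \in [-n + 1, -1]$, so the first branch applies and produces
\[
C_{-q n - t - i,\, -t,\, (q - p) n + (t - r),\, -r} = C_{0, u, 0, u},
\]
while the corresponding vertex is $(u, 0, 0)$ (which lies in $\Gamma_0$ since $0 \leq 0$). For $u \in [-m, 0]$ I would take $i := 0$, $p := q := 0$, $r := 0$ and $t := u + m \in [0, m]$; then $(q - p) n - r = 0$, so the third branch applies and produces
\[
C_{-q n - i,\, -m + t,\, 0,\, -m + t} = C_{0, u, 0, u},
\]
with corresponding vertex $(0,\, 0,\, (u + m) - m) = (0, 0, u)$ (which lies in $\Gamma_0$ since $0 \leq u + m$). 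Together with the identification $P_u = C_{0, u, 0, u}$ this gives the statement.

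There is no real obstacle: the only thing that requires attention is choosing the parameters so that one lands on the intended piece of the piecewise formula for $\Theta$ — in particular, getting the sign of $(q - p) n - r$ right in the middle family and placing $r$ and $t$ in the correct subintervals of $[-n + 1, m]$ — and noting that $n \geq 1$ guarantees $0 \in [-n + 1, 0]$, which is used in the case $u \in [-m, 0]$.
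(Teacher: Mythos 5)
Your proposal is correct and matches the paper's approach: the paper's proof is just the remark that the claim ``follows immediately from the formula for $\Theta$,'' and you have carried out exactly that verification, choosing admissible parameters $(p,q,r,t)$ for the vertices $(u,0,0)$ and $(0,0,u)$, landing on the right branches of the piecewise definition, and recovering the stalk complexes $C_{0,u,0,u} = P_u$. The computations check out.
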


\begin{proof}
This follows immediately from the formula for $\Theta$.
\end{proof}

\subsection{Irreducible morphisms}

As a final step in this section we describe the irreducible morphisms between the objects of $\Bbbk \Gamma / \calI$. We start with the following easy observation, whose proof is left to the reader.

% Note that formally we have to work in the additive closure of $\Bbbk \Gamma / \calI$ in order to stay in the setup of~\cite{Ringel1984}*{subsection~2.2}.

\begin{lemm} \label{lemm:auto}
Let $V, U \in \Gamma_0$ and $f \in \Hom (V, U)$. Then $f$ is an isomorphism if and only if $U = V$ and $f = \lambda \cdot f_{V, V} + \mu \cdot g_{V, V}'$, for some $\lambda, \mu \in \Bbbk$, $\lambda \neq 0$. \qed
\end{lemm}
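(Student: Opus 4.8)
The plan is to read off the statement directly from Lemma~\ref{lemm:dimhom} together with the $\bbN$-grading on the morphism spaces of $\Bbbk\Gamma/\calI$ introduced in its proof. I write $f = f_0 + f_1$ for the decomposition of a morphism into its degree~$0$ and degree~$1$ components, and recall that $\Id_V = f_{V,V}$ is homogeneous of degree~$0$, that $g_{V,V}'$ is homogeneous of degree~$1$, that $\Hom^{(d)}(V,U) = 0$ for $d \geq 2$, and that $\Hom^{(0)}(V,U)$ is nonzero precisely when $U \in \calF_V$, in which case it is spanned by $f_{U,V}$.

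For the implication ``$\Leftarrow$'' I would first show that $\End(V) = \Hom(V,V)$ is a local $\Bbbk$-algebra. By Lemma~\ref{lemm:dimhom} it is spanned by $f_{V,V}$ and $g_{V,V}'$; moreover $(g_{V,V}')^2 = 0$, since either $g_{V,V}' = 0$ or else (when $n = 1$) the relation $g_{V,V} \circ g_{V,V} = 0$ is an instance of the fourth family of defining relations of $\calI$ with $V = U = W$. Hence $\End(V)$ is isomorphic to $\Bbbk$ or to $\Bbbk[\varepsilon]/(\varepsilon^2)$, and $f = \lambda \cdot f_{V,V} + \mu \cdot g_{V,V}'$ is invertible if and only if $\lambda \neq 0$; when it is, an explicit two-sided inverse is $\lambda^{-1} \cdot f_{V,V} - \lambda^{-2}\mu \cdot g_{V,V}'$, as one checks using $f_{V,V} = \Id_V$ and $(g_{V,V}')^2 = 0$.

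For the implication ``$\Rightarrow$'' suppose $f \colon V \to U$ is an isomorphism with inverse $g \colon U \to V$. Comparing degree~$0$ components in $\Id_V = g \circ f$ (the degree~$2$ term $g_1 \circ f_1$ vanishes) gives $g_0 \circ f_0 = \Id_V \neq 0$, so $f_0 \neq 0$ and $g_0 \neq 0$. By Lemma~\ref{lemm:dimhom} the first forces $U \in \calF_V$ and the second forces $V \in \calF_U$; writing $U = (i,x,y)$ and $V = (i,a,b)$, membership $U \in \calF_V$ yields $a \leq x$ and $b \leq y$, while $V \in \calF_U$ yields $x \leq a$ and $y \leq b$, whence $U = V$. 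Once $U = V$, Lemma~\ref{lemm:dimhom} gives $f = \lambda \cdot f_{V,V} + \mu \cdot g_{V,V}'$ for some $\lambda, \mu \in \Bbbk$, and $f_0 = \lambda \cdot f_{V,V} \neq 0$ forces $\lambda \neq 0$.

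The only mildly delicate point is the ``$\Rightarrow$'' direction: one must make sure the relevant composites of the $f$'s are not annihilated by $\calI$, which is handled by passing to the degree~$0$ part, where $\Hom^{(0)}(V,V) = \Bbbk \cdot f_{V,V}$ and the composite of two graph maps $f_{V,U} \circ f_{U,V}$ equals $f_{V,V}$ by the first family of defining relations. Alternatively, the equality $U = V$ follows at once from Proposition~\ref{prop:category}, since $\Theta$ is an equivalence and distinct vertices of $\Gamma$ are sent to pairwise non-isomorphic indecomposable complexes; but the grading argument above is self-contained and slightly shorter.
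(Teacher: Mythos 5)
Your proposal is correct. The paper gives no argument for this lemma (it is explicitly ``left to the reader''), so there is nothing to compare against; your verification via the degree grading from the proof of Lemma~\ref{lemm:dimhom} is exactly the kind of routine check the authors had in mind. Both directions are sound: the explicit inverse $\lambda^{-1}\cdot f_{V,V}-\lambda^{-2}\mu\cdot g_{V,V}'$ works because $(g_{V,V}')^2=0$ is an instance of the fourth family of relations, and in the forward direction extracting the degree-zero component of $g\circ f=\Id_V$ correctly forces $U\in\calF_V$ and $V\in\calF_U$, whence $U=V$ by comparing the defining inequalities of $\calF$.
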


Recall that by the \emph{radical} of a Krull--Schmidt category we mean the ideal consisting of the maps $f \colon X \to Y$ such that, for each split monomorphism $\iota \colon X' \to X$ and each split epimorphism with $\pi \colon Y \to Y'$, with $X'$ and $Y'$ indecomposable, $\pi \circ f \circ \iota$ is not an isomorphism. As a first immediate consequence of Lemma~\ref{lemm:auto} we get the following (recall the we treat the category $\Bbbk \Gamma / \calI$ as a subcategory $\bK^b (\proj \Lambda)$).

\begin{coro} \label{coro:radical}
The radical of the category $\bK^b (\proj \Lambda)$ coincides with the ideal $\langle \Gamma_1 \rangle$ generated by the arrows in $\Gamma$. \qed
\end{coro}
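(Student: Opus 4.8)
The plan is to prove the two inclusions $\langle\Gamma_1\rangle \subseteq \rad\bK^b(\proj\Lambda)$ and $\rad\bK^b(\proj\Lambda) \subseteq \langle\Gamma_1\rangle$ separately. The first reduction is that both sides are ideals of the Krull--Schmidt category $\bK^b(\proj\Lambda)$, and every ideal $\mathcal{J}$ of an additive category is compatible with finite biproduct decompositions: under the canonical isomorphism $\Hom(\bigoplus_a X_a,\bigoplus_b Y_b)\cong\bigoplus_{a,b}\Hom(X_a,Y_b)$ one has $\mathcal{J}(\bigoplus_a X_a,\bigoplus_b Y_b)$ corresponding to $\bigoplus_{a,b}\mathcal{J}(X_a,Y_b)$ (one direction by composing with structural projections and inclusions, the other since $\mathcal{J}$ is closed under sums). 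Hence it suffices to compare $\rad\bK^b(\proj\Lambda)$ and $\langle\Gamma_1\rangle$ on the full subcategory $\ind\bK^b(\proj\Lambda)=\Bbbk\Gamma/\calI$, i.e.\ on morphisms between vertices of $\Gamma$, which is where Lemmas~\ref{lemm:auto} and~\ref{lemm:dimhom} apply.

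For $\langle\Gamma_1\rangle\subseteq\rad$, since $\rad$ is an ideal it is enough to show that each arrow of $\Gamma$ lies in $\rad$. An arrow $f_{U,V}$ or $g_{U,V}$ with $U\neq V$ is a morphism between the two indecomposables $V$ and $U$, which are non-isomorphic: by Lemma~\ref{lemm:auto} a morphism between vertices of $\Gamma$ can be invertible only when its source equals its target. Any morphism between non-isomorphic indecomposables lies in the radical directly from the definition recalled in the text, because a split monomorphism from an indecomposable into $V$ and a split epimorphism from $U$ onto an indecomposable are isomorphisms, so $\pi\circ f\circ\iota$ invertible would force $V\cong U$. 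The only loop arrows in $\Gamma_1$ are the arrows $g_{V,V}$, which exist only when $n=1$; in that case the last family of relations generating $\calI$ gives $g_{V,V}\circ g_{V,V}=0$, so by Lemma~\ref{lemm:auto} the endomorphism ring $\Hom(V,V)$ is local with $g_{V,V}$ spanning its Jacobson radical, whence $g_{V,V}\in\rad(V,V)$. Thus all arrows, and therefore all of $\langle\Gamma_1\rangle$, lie in $\rad$.

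For the reverse inclusion, let $f\in\rad(V,U)$ with $V,U$ vertices of $\Gamma$. If $U\neq V$, then $\Hom(V,U)$ in $\Bbbk\Gamma/\calI$ is spanned by the images of paths of positive length (the stationary paths contribute only to endomorphism spaces), so $\Hom(V,U)=\langle\Gamma_1\rangle(V,U)$ and $f$ lies there. If $U=V$, then $f$ belongs to the Jacobson radical of the local ring $\Hom(V,V)$; writing $f=\lambda f_{V,V}+\mu g'_{V,V}$ by the final assertion of Lemma~\ref{lemm:dimhom} and using that $f$ is not invertible, Lemma~\ref{lemm:auto} forces $\lambda=0$, so $f=\mu g'_{V,V}$, which lies in $\langle\Gamma_1\rangle(V,V)$ since $g'_{V,V}$ is either $0$ or the arrow $g_{V,V}$. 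This gives $\rad\subseteq\langle\Gamma_1\rangle$ and completes the proof.

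There is no real obstacle here — which is why the statement is recorded without proof above — but the one point deserving a moment of care is the case $n=1$, where the endomorphism ring of a vertex of $\Gamma$ may be two-dimensional: one must note that the extra endomorphism $g'_{V,V}$, although an endomorphism of an \emph{indecomposable} object, is nilpotent (by the relation $g_{W,U}\circ g_{U,V}=0$) and hence still lies in the radical. Everything else is a routine translation between the combinatorial description of $\Bbbk\Gamma/\calI$ provided by Lemmas~\ref{lemm:auto} and~\ref{lemm:dimhom} and the definition of the radical of a Krull--Schmidt category.
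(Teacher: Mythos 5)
Your proof is correct and follows exactly the route the paper has in mind: the paper records this corollary as an immediate consequence of Lemma~\ref{lemm:auto} and omits the argument, and your write-up is just the careful expansion of that (reduction to indecomposables, morphisms between distinct vertices lie in the radical automatically, and the only subtle point being the nilpotent loop $g_{V,V}$ when $n=1$, which you handle correctly via $g_{V,V}^2=0$ and the locality of the endomorphism ring).
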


For each arrow $\gamma$ in $\Gamma_1$ we define its \emph{degree} $\deg \gamma$ in the following way. If $V = (i, a, b) \in \Gamma_0$ and $U = (j, x, y) \in \calF_V$ (in particular, $j = i$), $U \neq V$, then we put $\deg f_{U, V} := (x - a) + (y - b)$. Moreover, if $V \in \Gamma_0$ and $U \in \calG_V$, then $\deg g_{U, V} := \infty$. These definitions extend naturally to the paths in $\Gamma$ (trivial paths having degree $0$). We have the following.

\begin{lemm} \label{lemm:rad}
Let $V \in \Gamma_0$. If $U \in \calF_V$, then $f_{U, V} \in \rad^{\deg f_{U, V}} (V, U) \setminus \rad^{\deg f_{U, V} + 1} (V, U)$. Similarly, if $U \in \calG_V$, then $g_{U, V} \in \rad^\infty (V, U)$.
\end{lemm}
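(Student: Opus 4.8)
The key is to understand what "radical power" means concretely in $\Bbbk\Gamma/\calI$. By Corollary~\ref{coro:radical}, $\rad = \langle\Gamma_1\rangle$, so $\rad^N(V,U)$ is spanned by the images in $\Bbbk\Gamma/\calI$ of paths $\gamma_N\circ\cdots\circ\gamma_1$ of length $N$ from $V$ to $U$ in $\Gamma$. The grading from the proof of Lemma~\ref{lemm:dimhom} (degree $0$ on the $f$'s, degree $1$ on the $g$'s, and no homomorphisms of degree $\geq 2$) is what controls everything. First I would treat the $g$-part: if $U\in\calG_V$, then $g_{U,V}$ lies in every $\rad^N$ because we can always lengthen a path hitting $U$ by precomposing with a nontrivial $f$; concretely, for any $N$ we can find a chain $V=W_0,W_1,\dots,W_{N-1}$ in $\Gamma_0$ with $W_{p}\in\calF_{W_{p-1}}$, $W_p\neq W_{p-1}$, and $U\in\calG_{W_{N-1}}$ (just keep increasing the second coordinate $x$, since $\calF_V$ is stable under that and $\calG$ only constrains $y$), and then $g_{U,W_{N-1}}\circ f_{W_{N-1},W_{N-2}}\circ\cdots\circ f_{W_1,V}=g_{U,V}$ by the relations defining $\calI$ (the middle two families of relations). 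Hence $g_{U,V}\in\rad^N(V,U)$ for all $N$, i.e.\ $g_{U,V}\in\rad^\infty(V,U)$.

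For the $f$-part, write $d:=\deg f_{U,V}=(x-a)+(y-b)$ where $V=(i,a,b)$, $U=(i,x,y)$. The upper bound $f_{U,V}\in\rad^{d}(V,U)$ I would get by exhibiting an explicit factorization of $f_{U,V}$ as a composite of $d$ arrows of degree $1$: each such arrow either increments the first or the second "$\calF$-coordinate" by one, and since $U\in\calF_V$ one can go from $(i,a,b)$ up to $(i,x,y)$ in exactly $d$ unit steps staying inside $\Gamma_0$; the composition of the corresponding $f$'s equals $f_{U,V}$ by the first family of relations. For the lower bound $f_{U,V}\notin\rad^{d+1}(V,U)$, I would argue by the grading: any path from $V$ to $U$ of length $N>d$ whose image is a nonzero multiple of $f_{U,V}$ must consist only of $f$-arrows (a single $g$ would push the degree in the Lemma~\ref{lemm:dimhom} grading to $1$, but $f_{U,V}$ has grading $0$, and two $g$'s give $0$). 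So it is a composite of $N$ nontrivial $f$-arrows $f_{W_p,W_{p-1}}$; by the first relation family its image equals $f_{U,V}$, but the total degree telescopes to $\sum_p\deg f_{W_p,W_{p-1}}=(x-a)+(y-b)=d$ with each summand $\geq 1$, forcing $N\leq d$, a contradiction. Thus $f_{U,V}\in\rad^{d}(V,U)\setminus\rad^{d+1}(V,U)$.

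The one point that needs care — and which I expect to be the main technical obstacle — is making the lower-bound argument fully rigorous, since $\rad^{d+1}(V,U)$ is spanned by images of \emph{linear combinations} of length-$(d+1)$ paths, and one must rule out the possibility that such a combination, while individually landing in $\rad^{d+1}$, equals $f_{U,V}$ only after applying relations from $\calI$ that lower the path length. The clean way around this is to observe that the relations in $\calI$ of degree $0$ are \emph{length-preserving up to the reductions $\rho_{\dots}$ analyzed in Lemma~\ref{lemm:dimhom}}: in the degree-$0$ part, every relator $\rho_{W_0,\dots,W_l,p}$ has all its monomials of the same total degree $d$ but of lengths $l$ and $l-1$; so modulo $\calI^{(0)}$, the class of $f_{U,V}$ is represented only by degree-$d$ combinations, and a degree-$d$ monomial of length $\geq d+1$ would need a factor of degree $0$, i.e.\ a trivial path, which we have excluded. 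Equivalently, one can invoke the explicit basis: Lemma~\ref{lemm:dimhom} says $\Hom(V,U)$ is at most two-dimensional with basis among $\{f_{U,V},g_{U,V}\}$, and a direct check using the ordering of $\calP$ by length (exactly as in the proof of Lemma~\ref{lemm:dimhom}) shows the class of any length-$(d+1)$ degree-$0$ path lies in $\Bbbk\cdot f_{U,V}$ only via the $\rho_{\dots,0}$ reductions, all of which strictly decrease length; iterating, a length-$(d+1)$ path cannot reduce to $f_{U,V}$ itself since $d+1>d$. Spelling this out carefully is the crux; the rest is the bookkeeping of unit steps in $\Gamma_0$, which I would leave to the reader.
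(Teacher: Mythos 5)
Your proof is correct and follows essentially the same route as the paper's: both rest on the homogeneity of the relations with respect to the degree function, the telescoping identity $\sum_p \deg f_{W_p, W_{p-1}} = \deg f_{U, V}$ along a chain of $f$-arrows, the factorization of $f_{U, V}$ into $\deg f_{U, V}$ arrows of degree one, and the factorization of $g_{U, V}$ through arbitrarily long chains of nontrivial $f$'s. Two remarks. First, in your chain $W_0, \ldots, W_{N - 1}$ for the $g$-part the coordinate you must increase is the \emph{last} one, not the middle one: for $V = (i, a, b)$ the set $\calF_V$ bounds the middle coordinate by $b + \delta_{i, 0} \cdot m$, and raising it strengthens the lower bound $y \geq a'$ in the definition of $\calG_W$; taking $W_p = (i, a, b + p)$ works, since $\calF_V$ is unbounded in the last coordinate and enlarging it only weakens the membership condition for $\calG_{W_p}$. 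Second, the ``crux'' you isolate at the end is not actually an obstacle: $\rad^{d + 1} (V, U)$ is spanned by the images \emph{in the quotient} of individual paths of length at least $d + 1$; your telescoping argument shows outright that no chain of more than $d$ nontrivial $f$-arrows from $V$ to $U$ exists in $\Gamma$, so every such path contains a $g$-arrow and its image is either zero or a scalar multiple of $g_{U, V}$ (being of degree one for the grading in the proof of Lemma~\ref{lemm:dimhom}), whence $\rad^{d + 1} (V, U)$ cannot contain $f_{U, V} \neq 0$. No analysis of whether the relations preserve path length is required.
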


\begin{proof}
In this proof, similarly as in the proof of Lemma~\ref{lemm:dimhom}, we will distinguish between morphisms in $\Bbbk \Gamma$ and $\Bbbk \Gamma / \calI$. In particular, for a morphism $\chi$ in $\Bbbk \Gamma$ we will denote by $[\chi]$ the corresponding morphism in $\Bbbk \Gamma / \calI$.

For $V, U \in \Gamma_0$ and $k \in \bbN \cup \{ \infty \}$, let $(\Bbbk \Gamma)_k (V, U)$ be the subspace of $\Hom_{\Bbbk \Gamma} (V, U)$ spanned by the paths of degree $k$. Obviously, $\Hom_{\Bbbk \Gamma} (V, U) = \bigoplus_{k \in \bbN \cup \{ \infty \}} (\Bbbk \Gamma)_k (V, U)$. Observe that the ideal $\calI$ is homogenous with respect to the above grading, i.e.\ if we put $\calI_k (V, U) := (\Bbbk \Gamma)_k (V, U) \cap \calI (V, U)$, then $\calI (V, U) = \bigoplus_{k \in \bbN \cup \{ \infty \}} \calI_k (V, U)$. Consequently, $\Hom_{\Bbbk \Gamma / \calI} (V, U) = \bigoplus_{k \in \bbN \cup \{ \infty \}} \calR_k (V, U)$, where $\calR_k (V, U) := (\Bbbk \Gamma)_k (V, U) / \calI_k (V, U)$, for $k \in \bbN \cup \{ \infty \}$.

We know from Corollary~\ref{coro:radical} that $\rad_{\Bbbk \Gamma / \calI} (V, U) = \bigoplus_{k \in \bbN_+ \cup \{ \infty \}} \calR_k (V, U)$. Now it is fairly easy to show that $\rad_{\Bbbk \Gamma / \calI}^d (V, U) = \bigoplus_{k \geq d} \calR_k (V, U)$, for each $d \in \bbN_+ \cup \{ \infty \}$ -- in order to prove this, we use two easy observations: first, if $\gamma \in \Gamma_1$ and $\deg \gamma < \infty$, then the map $[\gamma]$ is a composition in $\Bbbk \Gamma / \calI$ of $\deg \gamma$ morphisms (arrows) of degree $1$; secondly, if $\gamma \in \Gamma_1$ and $\deg \gamma = \infty$, then there exist arrows $\gamma', \gamma'' \in \Gamma_1$ such that $\deg \gamma' = \infty$, $\deg \gamma'' = 1$, and $[\gamma] = [\gamma'] \circ [\gamma'']$.

If $U \in \calF_V$, then by definition $[f_{U, V}] \in \calR_{\deg f_{U, V}} (V, U)$ and $[f_{U, V}] \not \in \bigoplus_{k > \deg_{f_{U, V}}} \calR_k (V, U)$ (here we use that $[f_{U, V}] \neq 0$ by Lemma~\ref{lemm:dimhom}). Similarly, if $U \in \calG_V$, then $[g_{U, V}] \in \calR_\infty (U, V)$. This finishes the proof.
\end{proof}

% A morphism $f$ is called irreducible if $f$ is neither a split monomorphism nor a split epimorphism, but for each factorization $f = f'' \circ f'$, $f'$ is a split monomorphism or $f''$ is a split epimorphism.

Since a morphism $f$ between indecomposable objects $X$ and $Y$ is irreducible if and only if $f \in \rad (X, Y) \setminus \rad^2 (X, Y)$ (see~\cite{Ringel1984}*{subsection~2.2}), Lemma~\ref{lemm:rad} implies the following.

\begin{coro} \label{coro:irreducible}
Let $V = (i, a, b), U \in \Gamma_0$, and $f \in \Hom (V, U)$. Then $f$ is irreducible if and only if either $U = (i, a, b + 1)$ or $U = (i, a + 1, b)$, and $f = \lambda \cdot f_{U, V} + \mu \cdot g_{U, V}'$, for some $\lambda, \mu \in \Bbbk$, $\lambda \neq 0$. \qed
\end{coro}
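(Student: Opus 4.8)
The plan is to derive Corollary~\ref{coro:irreducible} purely formally from Lemma~\ref{lemm:rad}, Lemma~\ref{lemm:auto}, and the characterisation of irreducible maps between indecomposables as those lying in $\rad \setminus \rad^2$, which is the content of the sentence preceding the statement. First I would record what Lemma~\ref{lemm:rad} tells us concretely: writing $\Hom(V,U) = \bigoplus_{k \in \bbN \cup \{\infty\}} \calR_k(V,U)$ with $\rad^d(V,U) = \bigoplus_{k \geq d} \calR_k(V,U)$ (for $d \in \bbN_+ \cup \{\infty\}$), the space $\rad(V,U)/\rad^2(V,U)$ is identified with $\calR_1(V,U)$. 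By Lemma~\ref{lemm:dimhom} together with the degree bookkeeping, $\calR_1(V,U)$ is nonzero precisely when $U \in \calF_V$ with $\deg f_{U,V} = 1$ — i.e.\ $U = (i,a,b+1)$ or $U = (i,a+1,b)$ — in which case it is spanned by (the class of) $f_{U,V}$; or when $n=1$ and $U \in \calG_V$ with $U = (i,a,b+1)$ or $(i,a+1,b)$, in which case $g_{U,V}' = g_{U,V}$ also contributes (note $g_{U,V}$ has degree $\infty$, so it lies in $\rad^\infty \subseteq \rad^2$ and hence is zero in the quotient $\calR_1$, but it is still a legitimate representative of a class only if that class already comes from $f_{U,V}$; more carefully, when $U \in \calF_V \cap \calG_V$, $\dim \Hom(V,U) = 2$ and $\calR_1$ is still one-dimensional, spanned by $f_{U,V}$, while $g_{U,V} \in \rad^\infty \subseteq \rad^2$).

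Second, I would translate "$f$ is irreducible" into "$f \in \rad(V,U)$ and the image of $f$ in $\rad(V,U)/\rad^2(V,U) = \calR_1(V,U)$ is nonzero". Since $f$ must land in a one-dimensional space spanned by $f_{U,V}$, this forces $U = (i,a,b+1)$ or $U=(i,a+1,b)$ (else $\calR_1 = 0$ and no irreducible maps exist), and forces $f = \lambda f_{U,V} + (\text{something in } \rad^2)$ with $\lambda \neq 0$. Finally I would argue that the "something in $\rad^2$" part can only be a scalar multiple of $g_{U,V}'$: by Lemma~\ref{lemm:dimhom} any element of $\Hom(V,U)$ is of the form $\lambda f_{U,V} + \mu g_{U,V}'$, and $g_{U,V}' \in \rad^\infty \subseteq \rad^2$ by Lemma~\ref{lemm:rad}, so this term lies in $\rad^2$ and does not affect irreducibility. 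Conversely, any $f$ of this shape with $\lambda \neq 0$ lies in $\rad$ (since both $f_{U,V}$ and $g_{U,V}'$ do, the former by Lemma~\ref{lemm:rad} because $\deg f_{U,V} = 1 \geq 1$, the latter trivially) and has nonzero image in $\calR_1$, hence is irreducible. This gives the stated equivalence.

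I do not expect a genuine obstacle here — the statement is explicitly flagged as a corollary of Lemma~\ref{lemm:rad}, and the only real content is the reduction of the condition "$U = (i,a,b+1)$ or $(i,a+1,b)$" to the condition "$\deg f_{U,V} = 1$", which is a direct unwinding of the definition $\deg f_{U,V} = (x-a)+(y-b)$ for $U = (i,x,y) \in \calF_V$: this sum equals $1$ exactly when $(x,y) \in \{(a,b+1),(a+1,b)\}$, using that $x \geq a$, $y \geq b$ (here one should keep in mind the shift by $\delta_{i,0} \cdot m$ in the definition of $\calF_V$, but it affects both the allowed range and the base point consistently, so the conclusion is unchanged). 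The mildly delicate point worth stating carefully is that when $n = 1$ the target $(i,a+1,b)$ or $(i,a,b+1)$ may simultaneously lie in $\calG_V$, so $\Hom(V,U)$ is two-dimensional and one must be explicit that the extra basis vector $g_{U,V}$ sits in $\rad^\infty$ and therefore neither obstructs nor produces irreducibility — this is exactly why $g_{U,V}'$, rather than $g_{U,V}$, appears in the final formula, and why the coefficient $\mu$ is unconstrained while $\lambda$ must be nonzero.
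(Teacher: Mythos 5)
Your argument is correct and is exactly the route the paper intends: the corollary carries a \qed because it is meant to follow immediately from the characterisation of irreducible maps as elements of $\rad \setminus \rad^2$ together with the grading $\rad^d(V,U) = \bigoplus_{k \geq d} \calR_k(V,U)$ established in the proof of Lemma~\ref{lemm:rad}, which identifies $\rad/\rad^2$ with $\calR_1(V,U)$, spanned by $f_{U,V}$ precisely when $\deg f_{U,V} = 1$. Your careful handling of the $g_{U,V}'$ term (lying in $\rad^\infty \subseteq \rad^2$, hence with unconstrained coefficient $\mu$) matches the role it plays in the paper's statement.
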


If $X$ and $Y$ are indecomposable objects, then we put $\Irr (X, Y) := \rad (X, Y) / \rad^2 (X, Y)$. We obtain the following from the above considerations.

\begin{coro} \label{coro:irr}
Let $V \in \Gamma_0$.
\begin{enumerate}

\item
If $U \in \Gamma_0$, then $\dim_\Bbbk \Irr (V, U) \leq 1$.

\item
There are at most two $U \in \Gamma_0$ such that $\Irr (V, U) \neq 0$.

\item \label{point:irr3}
There is a unique object $U \in \Gamma_0$ such that $\Irr (V, U) \neq 0$ if and only if $V = (i, a, a - \delta_{i, 0} \cdot m)$, for some $i \in [0, n - 1]$, $a \in \bbZ$. \qed

\end{enumerate}
\end{coro}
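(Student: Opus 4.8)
The plan is to deduce everything from the $\bbN \cup \{\infty\}$-grading on $\Bbbk\Gamma/\calI$ introduced in the proof of Lemma~\ref{lemm:rad}. Recall from there that $\Hom(V,U) = \bigoplus_{k \in \bbN \cup \{\infty\}} \calR_k(V,U)$, that $\rad(V,U) = \bigoplus_{k \geq 1}\calR_k(V,U)$ by Corollary~\ref{coro:radical}, and that $\rad^d(V,U) = \bigoplus_{k \geq d}\calR_k(V,U)$ for $d \in \bbN_+ \cup \{\infty\}$, with $\calR_\infty(V,U)$ counted inside every finite power. In particular $\rad^2(V,U) = \bigoplus_{k \geq 2}\calR_k(V,U)$, so
\[
\Irr(V,U) = \rad(V,U)/\rad^2(V,U) \cong \calR_1(V,U),
\]
and the whole corollary reduces to identifying the degree-one part of each morphism space.

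I would then compute $\calR_1(V,U)$ using Lemma~\ref{lemm:dimhom}: the space $\Hom(V,U)$ is spanned by $f_{U,V}$ and $g_{U,V}'$ when $U \in \calF_V$, by $g_{U,V}$ when $U \in \calG_V \setminus \calF_V$, and is zero otherwise, while every $g$-arrow has degree $\infty$. Hence $\calR_1(V,U)$ is zero unless $U \in \calF_V$ and $\deg f_{U,V} = 1$, in which case it is one-dimensional, spanned by the image of $f_{U,V}$ (nonzero by Lemma~\ref{lemm:dimhom}). Writing $V = (i,a,b)$ and $U = (i,x,y)$, membership $U \in \calF_V$ means $a \leq x \leq b + \delta_{i,0}\cdot m$ and $y \geq b$, while $\deg f_{U,V} = (x - a) + (y - b)$; so $\calR_1(V,U) \neq 0$ forces $(x,y) \in \{(a, b+1), (a+1, b)\}$, and conversely each of these two objects, once checked to lie in $\Gamma_0$, automatically lies in $\calF_V$ and yields $\calR_1(V,U)$ of dimension $1$. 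Parts~(1) and~(2) are immediate from this.

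For part~(3) I would observe that $(i,a,b+1)$ always belongs to $\Gamma_0$, since $a \leq b + \delta_{i,0}\cdot m \leq (b+1) + \delta_{i,0}\cdot m$, and therefore always satisfies $\Irr(V,(i,a,b+1)) \neq 0$. Hence the set of objects $U$ with $\Irr(V,U) \neq 0$ is a singleton precisely when the other candidate $(i,a+1,b)$ fails to lie in $\Gamma_0$, i.e.\ when $a+1 > b + \delta_{i,0}\cdot m$; combined with the standing inequality $a \leq b + \delta_{i,0}\cdot m$ this forces $a = b + \delta_{i,0}\cdot m$, that is $V = (i, a, a - \delta_{i,0}\cdot m)$. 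Conversely, for $V$ of this shape the object $(i,a+1,b)$ is not in $\Gamma_0$, so $(i,a,b+1)$ is the unique target, giving the stated equivalence.

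There is no genuine obstacle here; the argument is pure bookkeeping. The only points requiring care are the Kronecker-delta shifts at $i = 0$ and $i = n-1$ in the definitions of $\Gamma_0$ and $\calF_V$, the case $n = 1$ (where $\calG_V$ may meet $\calF_V$, which is harmless since $g$-arrows contribute nothing in degree $1$), and invoking the description of the radical powers from the proof of Lemma~\ref{lemm:rad} rather than re-deriving it. One could instead argue directly from Corollary~\ref{coro:irreducible}, which already lists the possible targets of an irreducible map out of $V$ as $(i,a,b+1)$ and $(i,a+1,b)$; but passing through $\calR_1$ makes the uniform bound $\dim_\Bbbk \Irr(V,U) \leq 1$ most transparent.
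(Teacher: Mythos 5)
Your proof is correct and follows essentially the same route as the paper, which states the corollary without a separate argument as an immediate consequence ``from the above considerations'', namely the grading and radical-power description in Lemma~\ref{lemm:rad} and the classification of irreducible maps in Corollary~\ref{coro:irreducible}. Your identification $\Irr(V,U) \cong \calR_1(V,U)$ and the subsequent bookkeeping with $\calF_V$ and $\deg f_{U,V}$ is exactly the intended justification.
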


We conclude this subsection with a proposition which will play a crucial role in constructing an isomorphism between $F$ and $\Id_{\bK^b (\proj \Lambda)}$.

\begin{prop} \label{prop:auto}
Let $V = (i, a, b) \in \Gamma_0$.
\begin{enumerate}

\item \label{point:auto1}
If $U = (i, a, b + 1)$ and $f \colon V \to U$ is irreducible, then there exists an automorphism $\phi \in \Aut (U)$ such that $\phi \circ f = f_{U, V}$.

\item \label{point:auto2}
If $b = a + 1 - \delta_{i, 0} \cdot m$, $U = (i, a + 1, a + 1 - \delta_{i, 0} \cdot m)$, and $f \colon V \to U$ is irreducible with $f \circ f_{V, W} = 0$, where $W := (i, a, a - \delta_{i, 0})$, then there exists an automorphism $\phi \in \Aut (U)$ such that $\phi \circ f = f_{U, V}$.

\end{enumerate}
Dually:
\begin{enumerate}
\renewcommand{\theenumi}{\arabic{enumi}'}

\item \label{point:auto1prim}
If $U = (i, a - 1, b)$ and $f \colon U \to V$ is irreducible, then there exists an automorphism $\phi \in \Aut (U)$ such that $f \circ \phi = f_{U, V}$.

\item \label{point:auto2prim}
If $b = a + 1 - \delta_{i, 0} \cdot m$, $U = (i, a, a - \delta_{i, 0} \cdot m)$, and $f \colon U \to V$ is irreducible with $f_{W, V} \circ f = 0$, where $W := (i, a + 1, a + 1 - \delta_{i, 0})$, then there exists an automorphism $\phi \in \Aut (U)$ such that $f \circ \phi = f_{U, V}$.

\end{enumerate}
\end{prop}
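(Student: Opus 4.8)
The plan is to prove parts~\eqref{point:auto1} and~\eqref{point:auto2} directly using the explicit description of morphism spaces from Lemma~\ref{lemm:dimhom} and the structure of the automorphism group given by Lemma~\ref{lemm:auto}; the primed statements then follow by a formal dualization. Throughout, recall that $\Aut(U)$ consists precisely of the maps $\lambda \cdot f_{U,U} + \mu \cdot g_{U,U}'$ with $\lambda \neq 0$, and that $g_{U,U}' \neq 0$ only when $n = 1$ (so when $n > 1$ every automorphism is a nonzero scalar and the claims are nearly immediate). The main point is to track what the ``correction'' summand $g'$ contributes when $n = 1$.

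First I would handle part~\eqref{point:auto1}. By Corollary~\ref{coro:irreducible}, an irreducible $f\colon V \to U = (i,a,b+1)$ has the form $f = \lambda \cdot f_{U,V} + \mu \cdot g_{U,V}'$ with $\lambda \neq 0$. If $n > 1$ then $g_{U,V}' = 0$ and $\phi := \lambda^{-1} \cdot f_{U,U}$ works. If $n = 1$, I need to check that $U \in \calF_U \cap \calG_U$ and that the composite $(\lambda^{-1} f_{U,U} + \nu g_{U,U}') \circ (\lambda f_{U,V} + \mu g_{U,V}')$ can be made equal to $f_{U,V}$ by a suitable choice of $\nu$: expanding using the defining relations of $\calI$ (the $f\circ f$, $g\circ f$, $f \circ g$, $g \circ g$ rules), the $g_{U,U}' \circ g_{U,V}'$ term vanishes, so the composite is $f_{U,V} + (\mu/\lambda + \nu) \cdot g_{U,V}'$, and $\nu := -\mu/\lambda$ finishes it — provided $g_{U,U}'$ actually lands where needed, which is exactly the statement that $U \in \calG_U$ when $n=1$, something one reads off from the definition of $\calG_V$ with $i = 0$. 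So the real content here is just bookkeeping with the relations.

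For part~\eqref{point:auto2} the situation is the genuinely delicate one: now $U = (i, a+1, a+1 - \delta_{i,0}m)$ sits at a ``turning'' vertex, and $f\colon V \to U$ is irreducible with the extra hypothesis $f \circ f_{V,W} = 0$ where $W = (i,a,a-\delta_{i,0}m)$. Again write $f = \lambda f_{U,V} + \mu g_{U,V}'$ with $\lambda \neq 0$. The hypothesis $f \circ f_{V,W} = 0$ is meant to pin down $\mu$ (or rather to force a relation between $\mu$ and $\lambda$), by computing $f_{U,V} \circ f_{V,W}$ and $g_{U,V}' \circ f_{V,W}$ separately: one of these is $f_{U,W}$ (hence nonzero, by Lemma~\ref{lemm:dimhom}, since $U \in \calF_W$), the other is $g_{U,W}'$ or $0$. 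Working out which is which using the relations and the positions of $V$, $W$, $U$ in $\Gamma_0$ — in particular using that $b = a+1-\delta_{i,0}m$ forces $V$ to lie one step past $W$ in the $\calF$-direction and $U$ one step past $V$ — I expect the vanishing condition to say that $\mu = 0$ up to a scalar tied to $\lambda$, after which $\phi := \lambda^{-1} f_{U,U}$ does the job. I expect \textbf{this step — correctly identifying which composites are nonzero and extracting the constraint the hypothesis $f \circ f_{V,W} = 0$ imposes on $(\lambda,\mu)$ — to be the main obstacle}, since it requires carefully distinguishing the $n = 1$ and $n > 1$ cases and checking membership in the various $\calF_V$, $\calG_V$ at the relevant vertices.

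Finally, the primed statements~\eqref{point:auto1prim} and~\eqref{point:auto2prim} follow by applying the unprimed statements in the opposite category. Concretely, $\bK^b(\proj \Lambda)^{\mathrm{op}} \simeq \bK^b(\proj \Lambda^{\mathrm{op}})$ and $\Lambda^{\mathrm{op}} \cong \Lambda(n,m)$ again (the quiver $Q(n,m)$ has an evident symmetry reversing all arrows, or one simply notes the combinatorial description of $\Gamma$ and $\calI$ is self-dual up to relabeling $(i,a,b) \mapsto (-i, -b, -a)$ or similar); under this duality $f_{U,V}$ and $g_{U,V}$ are interchanged with the analogous generators, irreducible maps go to irreducible maps, and automorphisms to automorphisms, so~\eqref{point:auto1} transforms into~\eqref{point:auto1prim} and~\eqref{point:auto2} into~\eqref{point:auto2prim}. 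Alternatively, and perhaps more cleanly, one repeats the computation of the previous two paragraphs verbatim with the direction of all arrows reversed — the relations in $\calI$ being two-sided, all the composites computed above have mirror-image analogues, so no new ideas are needed.
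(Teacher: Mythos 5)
Your overall strategy coincides with the paper's: write $f=\lambda\cdot f_{U,V}+\mu\cdot g_{U,V}'$ with $\lambda\neq 0$ via Corollary~\ref{coro:irreducible} and then correct by an automorphism of the form $\lambda^{-1}\cdot f_{U,U}+\nu\cdot g_{U,U}'$. Part~\eqref{point:auto1} is essentially right (up to a harmless scalar slip: the coefficient of $g_{U,U}'\circ f_{U,V}$ in the composite is $\nu\lambda$, not $\nu$, so the correct choice is $\nu=-\lambda^{-2}\mu$, as in the paper).

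However, in part~\eqref{point:auto2} — which you yourself flag as the crux — the computation you sketch points in the wrong direction, and this is a genuine gap. You assert that $U\in\calF_W$ and hence that $f_{U,V}\circ f_{V,W}=f_{U,W}\neq 0$. In fact $U\notin\calF_W$: with $W=(i,a,a-\delta_{i,0}m)$ one has $\calF_W=\{(i,a,y):y\geq a-\delta_{i,0}m\}$ (the first coordinate is pinned to $a$ because the upper bound $b+\delta_{i,0}m$ equals $a$ here), whereas $U$ has first coordinate $a+1$. So $f_{U,V}\circ f_{V,W}=f_{U,W}=0$, and the hypothesis $f\circ f_{V,W}=0$ constrains only the $g'$-component: assuming $g_{U,V}'\neq 0$ forces $n=1$, $i=0$ and then $m=1$, whence $U\in\calG_W$ and $g_{U,V}'\circ f_{V,W}=g_{U,W}\neq 0$, so $\mu=0$ and $\phi:=\lambda^{-1}f_{U,U}$ works. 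Had your identification been correct, the hypothesis would force $\lambda=0$ and contradict irreducibility, i.e.\ the statement would be vacuous — so getting this membership right is not optional bookkeeping but the content of the proof. The paper handles the dual statements by simply dualizing the computation; your proposed self-duality $(i,a,b)\mapsto(-i,-b,-a)$ of $\Bbbk\Gamma/\calI$ does not obviously preserve the sets $\calF_V$ (the two coordinates play asymmetric roles, one bounded and one unbounded), so your fallback of redoing the computation with arrows reversed is the safer route.
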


\begin{proof}
\eqref{point:auto2}~We know from Corollary~\ref{coro:irreducible} that $f = \lambda \cdot f_{U, V} + \mu \cdot g_{U, V}'$, for some $\lambda, \mu \in \Bbbk$, $\lambda \neq 0$. We show that the condition $f \circ f_{V, W} = 0$ implies $\mu \cdot g_{U, V}' = 0$. Indeed, assume that $g_{U, V}' \neq 0$, i.e.\ $U \in \calG_V$. This means $n = 1$, $i = 0$, and
\[
a + 1 \leq a + m \qquad \text{and} \qquad a \leq a + 1 - m \leq a + 1,
\]
which implies $m = 1$. Consequently, $U \in \calG_W$, thus $g_{U, V}' \circ f_{V, W} = g_{U, V} \circ f_{V, W} = g_{U, W} \neq 0$. Since $U \not \in \calF_W$, we get $0 = f \circ f_{V, W} = \mu \cdot g_{U, W}$, hence $\mu = 0$, thus $\mu \cdot g_{U, V}' = 0$.

As $\mu \cdot g_{U, V}' = 0$, we can put $\phi := \lambda^{-1} \cdot f_{U, U}$, and get $\phi \in \Aut (U)$ such that $\phi \circ f = f_{U, V}$.

\eqref{point:auto1}~Again $f = \lambda \cdot f_{U, V} + \mu \cdot g_{U, V}'$, for some $\lambda, \mu \in \Bbbk$, $\lambda \neq 0$, by Corollary~\ref{coro:irreducible}. We put
$\phi := \lambda^{-1} \cdot f_{U, U}  - \lambda^{-2} \cdot \mu \cdot g_{U, U}'$. Then $\phi \in \Aut (U)$ (by Lemma~\ref{lemm:auto}) and by direct calculations (similarly as above we show that $g_{U, U}' \neq 0$ provided $g_{U, V}' \neq 0$) we get $\phi \circ f = f_{U, V}$.

Proofs of statements~\eqref{point:auto1prim} and~\eqref{point:auto2prim} are dual.
\end{proof}

\section{Proof of the main result} \label{sect:proof}

Throughout this section again $\Lambda := \Lambda (n, m)$, for $n \in \bbN_+$ and $m \in \bbN$. We also fix a pseudo-identity $(F, \omega)$ on $\bK^b (\proj \Lambda)$. Our aim is to show that $(F, \omega)$ is isomorphic, as a triangle functor, to the identity functor $(\Id_{\mathbf{K}^b(\proj \Lambda)}, \Id_\Sigma)$ on $\bK^b (\proj \Lambda)$, where $\Sigma$ is (as usual) the suspension functor.

We will freely use notation introduced in Section~\ref{sect:category}. In particular, by $\Gamma$ we mean the quiver introduced in subsection~\ref{subsect:category}.

% Additionally, the \ Auslander--Reiten translation $\tau$ acts on $\Bbbk \Gamma / \calI$ in such a way that $\tau V = (i, a - 1, b - 1)$ (again, if $V = (i, a, b)$).

\subsection{Construction of the isomorphism}

In this subsection we construct automorphisms $\phi_U \in \Aut (U)$, $U \in \Gamma_0$, which satisfy (as we will show in the next subsection) the condition $\phi_U \circ F (f) = f \circ \phi_V$, for all $V, U \in \Gamma_0$ and morphisms $f \colon V \to U$. We will then use these automorphisms to construct a natural isomorphism between $(F, \omega)$ and $(\Id_{\bK^b (\proj \Lambda)}, \omega')$, for some connecting isomorphism $\omega' \colon \Sigma \to \Sigma$.

Before we proceed with the construction, we explain how our method differs from the one used by Chen and Zhang in~\cite{ChenZhang}. In the cases considered in~\cite{ChenZhang}, for each indecomposable object $U$ of the homotopy category which is not (homotopy equivalent to) a stalk complex, there exists a stalk complex $V$ such that $\dim_{\Bbbk} \Hom (V, U) = 1$. Consequently, if $f \colon V \to U$ is a nonzero map, then $F (f) = \lambda \cdot f$, for some scalar $\lambda$, and they use $\lambda$ to define the required automorphism $\phi_U$.

In our case, the above property may not hold. Thus in order to construct $\phi_U$, we use an inductive procedure based on Proposition~\ref{prop:auto}, which uses a properly chosen irreducible morphism starting or terminating at $U$.

Fix $i \in [0, n - 1]$. As mentioned above we construct automorphisms $\phi_U$ for $U = (i, a, b)$, $a, b \in \bbZ$, $a \leq b + \delta_{i, 0} \cdot m$, inductively.

First we put $\phi_U := f_{U, U} (= \Id_U)$, for $U = (i, 0, b)$, $b \in [-\delta_{i, 0} \cdot m, 0]$. Note that if $i = 0$ and $j \in [-m, -1]$, then $\phi_U \circ F (f_{U, V}) \circ \phi_V^{-1} = f_{U, V}$, where $V := (0, 0, j)$ and $U := (0, 0, j + 1)$, since $F (f_{U, V}) = f_{U, V}$, as $F$ is a pseudo-identity and $V$ and $U$ are projective modules (by Corollary~\ref{coro:projective}).

Next assume that $b \geq 0$, $U = (i, 0, b + 1)$, and $\phi_V$, where $V := (i, 0, b)$, is already constructed. Since $f_{U, V}$ is irreducible by Corollary~\ref{coro:irreducible}, $F$ is an equivalence, and $\phi_V$ is an automorphism, $F (f_{U, V}) \circ \phi_V^{-1}$ is irreducible. Using Proposition~\ref{prop:auto}\eqref{point:auto1} we find an automorphism $\phi_U \in \Aut (U)$ such that $\phi_U \circ F (f_{U, V}) \circ \phi_V^{-1} = f_{U, V}$.

Now assume that $a \geq 0$, $U = (i, a + 1, a + 1 - \delta_{i, 0} \cdot m)$, and $\phi_V$ and $\phi_W$ such that $\phi_V \circ F (f_{V, W}) \circ \phi_W^{-1} = f_{V, W}$, where $V := (i, a, a + 1 - \delta_{i, 0} \cdot m)$ and $W := (i, a, a - \delta_{i, 0} \cdot m)$, are already known. Similarly as above $F (f_{U, V}) \circ \phi_V^{-1}$ is irreducible. Moreover,
\begin{multline*}
F (f_{U, V}) \circ \phi_V^{-1} \circ f_{V, W} = F (f_{U, V}) \circ \phi_V^{-1} \circ \phi_V \circ F (f_{V, W}) \circ \phi_W^{-1}
\\
= F (f_{U, V} \circ f_{V, W}) \circ \phi_W^{-1} = F (0) \circ \phi_W^{-1} = 0,
\end{multline*}
as $U \not \in \calF_W$. Consequently there exists an automorphism $\phi_U \in \Aut (U)$ such that $\phi_U \circ F (f_{U, V}) \circ \phi_V^{-1} = f_{U, V}$ by Proposition~\ref{prop:auto}\eqref{point:auto2}.

Finally assume that $a > 0$, $b \geq a - \delta_{i, 0} \cdot m$, $U = (i, a, b + 1)$, and $\phi_V$, where $V := (i, a, b)$, is already constructed. In this case we use Proposition~\ref{prop:auto}\eqref{point:auto1} again in order to find an automorphism $\phi_U \in \Aut (U)$ such that $\phi_U \circ F (f_{U, V}) \circ \phi_V^{-1} = f_{U, V}$.

The automorphisms $\phi_U$, for $U = (i, a, b)$ with $a < 0$, are defined similarly. Namely, for a given $a < 0$ we first define $\phi_U$, for $U = (i, a, a + 1 - \delta_{i, 0} \cdot m)$, using Proposition~\ref{prop:auto}\eqref{point:auto1prim} and that $\phi_V$, where $V := (i, a + 1, a + 1 - \delta_{i, 0} \cdot m)$, is already defined. Next we construct $\phi_U$, for $U = (i, a, a - \delta_{i, 0} \cdot m)$ using Proposition~\ref{prop:auto}\eqref{point:auto2prim}, and finally $\phi_U$, for $U = (i, a, b)$ with $b > a + 1 - \delta_{i, 0} \cdot m$, using Proposition~\ref{prop:auto}\eqref{point:auto1} and induction on $b$.

\subsection{Verification}

We verify now that $\phi_U \circ F (f) = f \circ \phi_V$ (equivalently, $F' (f) = f$, where $F' (f) := \phi_U \circ F (f) \circ \phi_V^{-1}$), for all $V, U \in \Gamma_0$ and morphisms $f \colon V \to U$, where $\phi_U$, $U \in \Gamma_0$, are the automorphisms constructed in the previous subsection. In fact it is enough to prove the above equalities for $f = f_{U, V}$ (if $U \in \calF_V$, $U \neq V$) and $f = g_{U, V}$ (if $U \in \calG_V$).

By the above construction we know that $F' (f_{U, V}) = f_{U, V}$ in the following cases:
\begin{enumerate}

\item
$V = (i, a, b)$ and $U = (i, a, b + 1)$,

\item
$V = (i, a, a + 1 - \delta_{i, 0} \cdot m)$ and $U = (i, a + 1, a + 1 - \delta_{i, 0} \cdot m)$.

\end{enumerate}
We show first that the above formula holds for the remaining arrows of degree $1$, i.e.\ for $V = (i, a, b)$ and $U = (i, a + 1, b)$, where $b > a + 1 - \delta_{i, 0} \cdot m$. Put $W := (i, a, b - 1)$ and $V' := (i, a + 1, b - 1)$. By the above observations and the induction hypothesis
\[
F' (f_{V, W}) = f_{V, W}, \qquad F' (f_{V', W}) = f_{V', W}, \qquad F' (f_{U, V'}) = f_{U, V'}.
\]
Since $F' (f_{U, V}) \in \Hom (U, V)$, $F' (f_{U, V}) = \lambda \cdot f_{U, V} + \mu \cdot g_{U, V}'$, for some $\lambda, \mu \in \Bbbk$. Using that $f_{U, V} \circ f_{V, W} = f_{U, W} = f_{U, V'} \circ f_{V', W}$, we get
\[
F' (f_{U, V}) \circ F' (f_{V, W}) = F' (f_{U, V'}) \circ F' (f_{V', W}).
\]
Consequently,
\[
\lambda \cdot f_{U, W} + \mu \cdot g_{U, V}' \circ f_{V, W} = f_{U, W}.
\]
This immediately implies $\lambda = 1$ and $\mu \cdot g_{U, V}' \circ f_{V, W} = 0$. As a result $F' (f_{U, V}) = f_{U, V}$, provided $g_{U, V}' = 0$, thus assume that $g_{U, V}' \neq 0$. Then $U \in \calG_V$ and this implies $U \in \calG_W$, thus $g_{U, V}' \circ f_{V, W} = g_{U, V} \circ f_{V, W} = g_{U, W} \neq 0$. Consequently, $\mu = 0$ and again $F' (f_{U, V}) = f_{U, V}$.

Now let $V$ and $U$ be arbitrary such that $U \in \calF_V$, $U \neq V$. We know that $f_{U, V}$ is a composition of $\deg f_{U, V}$ arrows of degree $1$ (see the discussion after Corollary~\ref{coro:radical}), hence $F' (f_{U, V}) = f_{U, V}$.

Finally we show that $F' (g_{U, V}) = g_{U, V}$, for all $V \in \Gamma_0$, $U \in \calG_V$. Since $g_{U, V} \in \rad^\infty (V, U)$, while $f_{U, V} \not \in \rad^\infty (V, U)$ (provided $U \in \calF_V$) by Lemma~\eqref{lemm:rad}, there exists $\lambda_{U, V} \in \Bbbk$ such that $F' (g_{U, V}) = \lambda_{U, V} \cdot g_{U, V}$. Our aim is to show that $\lambda_{U, V} = 1$, for all possible $V$ and $U$. The following will be useful.

\begin{lemm} \label{lemm:lambdag}
Fix $i \in [0, n - 1]$ and $a \in \bbZ$. If there exist $b \in [a - \delta_{i, 0} \cdot m, \infty)$ and $x \in (-\infty, a + \delta_{i, n - 1} \cdot m]$, such that $\lambda_{(i + 1, x, a), (i, a, b)} = 1$, then $\lambda_{(i + 1, x', a), (i, a, b')} = 1$, for all $b' \in [a - \delta_{i, 0} \cdot m, \infty)$ and $x' \in (-\infty, a + \delta_{i, n - 1} \cdot m]$.
\end{lemm}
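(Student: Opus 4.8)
The plan is to propagate the single equality $\lambda_{(i+1,x,a),(i,a,b)} = 1$ across all admissible pairs $(x,b)$ by means of the two mixed relations defining $\calI$ (the ones of the form $g\circ f - g$ and $f\circ g - g$), using that $F'$ respects composition — which is immediate from $F'(h) = \phi_U \circ F(h) \circ \phi_V^{-1}$ for $h\colon V\to U$ together with the functoriality of $F$ — and that $F'(f_{U,V}) = f_{U,V}$ for every $f$-arrow, already established. Recall also that for $g$-arrows we already know $F'(g_{U,V}) = \lambda_{U,V}\cdot g_{U,V}$.

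First I would vary the source of the $g$-arrow. Fix $i\in[0,n-1]$ and $a\in\bbZ$, and for $b\geq a - \delta_{i,0}\cdot m$ and $x\leq a + \delta_{i,n-1}\cdot m$ put $V := (i,a,b)$, $V' := (i,a,b+1)$ and $W := (i+1,x,a)$. From the descriptions of $\calF$ and $\calG$ one checks that $V'\in\calF_V$, $V'\neq V$, and $W\in\calG_V\cap\calG_{V'}$, so the relation $g_{W,V'}\circ f_{V',V} = g_{W,V}$ holds in $\Bbbk\Gamma/\calI$. Applying $F'$ and inserting $F'(f_{V',V}) = f_{V',V}$ and $F'(g_{W,V'}) = \lambda_{W,V'}\cdot g_{W,V'}$ yields $\lambda_{(i+1,x,a),(i,a,b)} = \lambda_{(i+1,x,a),(i,a,b+1)}$; chaining these equalities shows that, for fixed $x$, the scalar $\lambda_{(i+1,x,a),(i,a,b)}$ does not depend on $b$.

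Next I would vary the target. With $U := (i+1,x,a)$, $U' := (i+1,x+1,a)$ (now assuming $x+1\leq a + \delta_{i,n-1}\cdot m$) and $V := (i,a,b)$ one has $U\in\calG_V$ and $U'\in\calF_U$, $U'\neq U$, so $f_{U',U}\circ g_{U,V} = g_{U',V}$ holds; applying $F'$ and using $F'(f_{U',U}) = f_{U',U}$ gives $\lambda_{(i+1,x+1,a),(i,a,b)} = \lambda_{(i+1,x,a),(i,a,b)}$. Hence $\lambda_{(i+1,x,a),(i,a,b)}$ is also independent of $x$.

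Combining the two, $\lambda_{(i+1,x,a),(i,a,b)}$ takes a common value over the whole index region $\{(x,b) : x\leq a + \delta_{i,n-1}\cdot m,\ b\geq a - \delta_{i,0}\cdot m\}$, which is connected by the elementary moves $b\leftrightarrow b+1$ and $x\leftrightarrow x+1$ just described; in particular, if it equals $1$ at one admissible pair it equals $1$ at every admissible pair, which is exactly the assertion. I do not expect a substantive obstacle here: the only point requiring care is the routine bookkeeping that all the quadruples invoked lie in $\Gamma_0$ and that the $g$-arrows appearing in the relations are genuinely nonzero, and both follow directly from the definitions of $\calF_V$ and $\calG_V$ (using that $a + \delta_{i,n-1}\cdot m = a + \delta_{i+1,0}\cdot m$, since $i+1$ is taken modulo $n$).
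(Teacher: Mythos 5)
Your argument is correct and rests on exactly the same mechanism as the paper's proof: apply $F'$ to the mixed relations $g_{W,U}\circ f_{U,V}=g_{W,V}$ and $f_{W,U}\circ g_{U,V}=g_{W,V}$, use that $F'$ is multiplicative and fixes all $f$-arrows, and cancel the nonzero $g$-arrow to compare scalars. The only difference is organizational: the paper routes every admissible pair through the single pivot $g_{U,V}$ with $U=(i+1,a+\delta_{i,n-1}\cdot m,a)$ and $V=(i,a,a-\delta_{i,0}\cdot m)$ via the three-fold factorization $g_{U,V}=f_{U,U'}\circ g_{U',V'}\circ f_{V',V}$, whereas you connect the index region by adjacent moves in $b$ and $x$; both are complete and the bookkeeping you flag (membership in $\calF$ and $\calG$, nonvanishing of the $g$-arrows) checks out.
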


Recall that as usual $i + 1$ is calculated modulo $n$.

\begin{proof}
We first show that $\lambda_{(i + 1, a + \delta_{i, n - 1} \cdot m, a), (i, a, a - \delta_{i, 0} \cdot m)} = 1$. Put $V := (i, a, a - \delta_{i, 0} \cdot m)$, $V' := (i, a, b)$, $U' := (i + 1, x, a)$, and $U := (i + 1, a + \delta_{i, n - 1} \cdot m, a)$. Then $g_{U, V} = f_{U, U'} \circ g_{U', V'} \circ f_{V', V}$, hence
\begin{multline*}
\lambda_{(i + 1, a + \delta_{i, n - 1} \cdot m, a), (i, a, a - \delta_{i, 0} \cdot m)} \cdot g_{U, V} = F' (g_{U, V})
\\
= F' (f_{U, U'}) \circ F' (g_{U', V'}) \circ F' (f_{V', V}) = f_{U, U'} \circ g_{U', V'} \circ f_{V', V} = g_{U, V},
\end{multline*}
thus $\lambda_{(i, a, a - \delta_{i, 0} \cdot m), (i + 1, a + \delta_{i, n - 1} \cdot m, a)} = 1$.

Now let $b'$ and $x'$ be arbitrary, and put $V'' := (i, a, b')$ and $U'' := (i + 1, x', a)$. Similarly as above $g_{U, V} = f_{U, U''} \circ g_{U'', V''} \circ f_{V'', V}$, hence
\begin{multline*}
g_{U, V} = F' (g_{U, V}) = F' (f_{U, U''}) \circ F' (g_{U'', V''}) \circ F' (f_{V'', V})
\\
= f_{U, U''} \circ (\lambda_{(i + 1, x', a), (i, a, b')} \cdot g_{U'', V''}) \circ f_{V'', V} = \lambda_{(i + 1, x', a), (i, a, b')} \cdot g_{U, V},
\end{multline*}
thus $\lambda_{(i + 1, x', a), (i, a, b')} = 1$.
\end{proof}

Fix $i \in [0, n - 1]$. We show that $\lambda_{(i + 1, x, y), (i, a, b)} = 1$, for all $a \in \bbZ$, $b \in [a - \delta_{i, 0} \cdot m, \infty)$, $x \in (-\infty, a + \delta_{i, n - 1} \cdot m]$, $y \in [a, b + \delta_{i, 0} \cdot m]$, in several steps.

$\mathbf{0}^\circ$. $a = 0 = y$.

Put $V := (i, 0, 0)$ and $U := (i + 1, 0, 0)$. By the construction, $\phi_V = \Id_V$ and $\phi_U = \Id_U$. Since $V$ and $U$ are projective modules by Corollary~\ref{coro:projective}, $F (g_{U, V}) = g_{U, V}$. Consequently, $F' (g_{U, V}) = \phi_U \circ F (g_{U, V}) \circ \phi_V^{-1} = g_{U, V}$, i.e.\ $\lambda_{U, V} = 1$. Now the claim in this case follows for arbitrary $b$ and $x$ from Lemma~\ref{lemm:lambdag}.

$\mathbf{1}^\circ$. $a = y$ arbitrary.

Assume first that $a > 0$. Put $V' := (i, 0, a - \delta_{i, 0} \cdot m)$, $V := (i, a, a - \delta_{i, 0} \cdot m)$, $U' := (i + 1, \delta_{i, n - 1} \cdot m, 0)$, and $U := (i + 1, \delta_{i, n - 1} \cdot m, a)$. Then $f_{U, U'} \circ g_{U', V'} = g_{U, V'} = g_{U, V} \circ f_{V, V'}$. By earlier steps
\[
F' (f_{V, V'}) = f_{V, V'}, \qquad F' (f_{U, U'}) = f_{U, U'}, \qquad F' (g_{U', V'}) = g_{U', V'}.
\]
Consequently,
\begin{multline*}
g_{U, V'} = f_{U, U'} \circ g_{U', V'} = F' (f_{U, U'} \circ g_{U', V'})
\\
= F' (g_{U, V} \circ f_{V, V'}) = (\lambda_{U, V} \cdot g_{U, V}) \circ f_{V, V'} = \lambda_{U, V} \cdot g_{U, V'},
\end{multline*}
hence $\lambda_{U, V} = 1$. We use Lemma~\ref{lemm:lambdag} again and the claim follows.

The proof for $a < 0$ is analogous.

$\mathbf{2}^\circ$. $a$ and $y$ are arbitrary.

Put $V := (i, a, b)$, $W := (i, y, b)$, and $U := (i, x, y)$. Then $g_{U, V} = g_{U, W} \circ f_{W, V}$. We already know that $F' (g_{U, W}) = g_{U, W}$ and $F' (f_{W, V}) = f_{W, V}$. Consequently, $F' (g_{U, V}) = g_{U, V}$, i.e.\ $\lambda_{U, V} = 1$.

The following proposition, which in view of Proposition~\ref{prop:standard} constitutes the first important step in the proof of the main result, summarizes the above calculations.

\begin{prop} \label{prop:iso}
There exists a natural isomorphism $\phi \colon F \to \Id_{\bK^b (\proj \Lambda)}$ and a natural isomorphism $\omega' \colon \Sigma \to \Sigma$ such that $\phi$ is a natural isomorphism between triangle functors $(F, \omega)$ and $(\Id_{\bK^b (\proj \Lambda)}, \omega')$.
\end{prop}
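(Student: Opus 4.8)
The plan is to package the work of the two previous subsections. The verification subsection establishes precisely that the automorphisms $\phi_U$, $U \in \Gamma_0$, satisfy $\phi_U \circ F(f) = f \circ \phi_V$ for every morphism $f \colon V \to U$ with $V, U \in \Gamma_0$; since by Lemma~\ref{lemm:dimhom} such $f$ span $\Hom(V, U)$, the family $(\phi_U)_{U \in \Gamma_0}$ is a natural isomorphism between the restrictions of $F$ and $\Id_{\bK^b(\proj \Lambda)}$ to the full subcategory $\ind \bK^b(\proj \Lambda)$ of indecomposable objects. Thus two things remain: to extend $\phi$ to all of $\bK^b(\proj \Lambda)$, and to promote it to an isomorphism of triangle functors.

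For the extension I would use that $\bK^b(\proj \Lambda)$ is a Krull--Schmidt category: fix, for each object $X$, a biproduct decomposition $X = \bigoplus_i U_i$ into indecomposables, with structure morphisms $\iota_i \colon U_i \to X$ and $\pi_i \colon X \to U_i$. Since $F$, being a triangle functor, is additive and fixes objects, applying $F$ to the biproduct relations shows that the $F(\iota_i)$ and $F(\pi_i)$ again form a biproduct decomposition of $X = F(X)$ with the same summands, so
\[
\phi_X := \sum_i \iota_i \circ \phi_{U_i} \circ F(\pi_i)
\]
is an automorphism of $X$ (with inverse $\sum_i F(\iota_i) \circ \phi_{U_i}^{-1} \circ \pi_i$) that reduces to $\phi_U$ when $X = U$ is indecomposable. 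Naturality, $\phi_Y \circ F(f) = f \circ \phi_X$ for $f \colon X \to Y$, is then a short matrix computation: expressing $f$ through its components between the indecomposable summands of $X$ and $Y$, one combines the fact that $F$ preserves the biproduct relations with the naturality already known for morphisms between indecomposables. This yields a natural isomorphism $\phi \colon F \to \Id_{\bK^b(\proj \Lambda)}$ of ordinary functors.

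To make $\phi$ an isomorphism of triangle functors, the connecting isomorphism on the target is forced: one puts $\omega'_X := \Sigma \phi_X \circ \omega_X \circ \phi_{\Sigma X}^{-1} \colon \Sigma X \to \Sigma X$, which is a natural automorphism of $\Sigma$ (a composite of natural isomorphisms), and then the compatibility $\omega'_X \circ \phi_{\Sigma X} = \Sigma \phi_X \circ \omega_X$ required of a natural transformation of triangle functors holds by definition. The only substantive point left is that $(\Id_{\bK^b(\proj \Lambda)}, \omega')$ is genuinely a triangle functor. For this, given an exact triangle $X \xrightarrow{f} Y \xrightarrow{g} Z \xrightarrow{h} \Sigma X$, the triangle $X \xrightarrow{Ff} Y \xrightarrow{Fg} Z \xrightarrow{\omega_X \circ Fh} \Sigma X$ is exact because $(F, \omega)$ is a triangle functor, and the isomorphisms $\phi_X, \phi_Y, \phi_Z, \Sigma \phi_X$ form a commuting ladder from it to $X \xrightarrow{f} Y \xrightarrow{g} Z \xrightarrow{\omega'_X \circ h} \Sigma X$; here the first two squares commute by naturality of $\phi$ at $f$ and $g$, and the last one because naturality of $\phi$ at $h$ gives $F(h) = \phi_{\Sigma X}^{-1} \circ h \circ \phi_Z$, whence $\Sigma \phi_X \circ (\omega_X \circ Fh) = (\omega'_X \circ h) \circ \phi_Z$. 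Since a triangle isomorphic to an exact one is exact, $(\Id_{\bK^b(\proj \Lambda)}, \omega')$ is a triangle functor and $\phi$ is the desired isomorphism.

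I expect no real obstacle here: the substantive content is the construction and verification of the $\phi_U$ carried out in the preceding subsections, and the only step needing a moment's care is the observation that a pseudo-identity need not fix biproduct structure morphisms, which is why the extension formula for $\phi_X$ must be written with $F(\pi_i)$ rather than $\pi_i$. The triangle-functor bookkeeping is then purely formal.
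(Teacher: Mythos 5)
Your proposal is correct and follows essentially the same route as the paper: the substantive input is the family $(\phi_U)_{U \in \Gamma_0}$ and its naturality on $\ind \bK^b(\proj \Lambda)$ established in the preceding subsections, and the two remaining steps are exactly the ones the paper takes. The only difference is that the paper disposes of those steps by citing \cite{ChenYe}*{Lemma~2.4} (extension of the natural isomorphism to the additive closure) and \cite{ChenYe}*{Lemma~2.3} (existence of the induced connecting isomorphism $\omega'$), whereas you prove both by hand -- correctly, including the necessary care in writing $\phi_X$ with $F(\pi_i)$ rather than $\pi_i$.
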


\begin{proof}
We know from the preceding discussion that we have a natural isomorphism $\phi \colon F |_{\Bbbk \Gamma / \calI} \to \Id_{\Bbbk \Gamma / \calI}$. The isomorphism $\phi$ extends to a natural isomorphism $\phi \colon F \to \Id_{\bK^b (\proj \Lambda)}$ by~\cite{ChenYe}*{Lemma~2.4}. The existence of $\omega'$ is a consequence of~\cite{ChenYe}*{Lemma~2.3}.
\end{proof}

\subsection{Connecting isomorphism}

Let $(\Id_{\bK^b (\proj \Lambda)}, \omega')$ be a triangle functor. Our aim is to show that the functor $(\Id_{\bK^b (\proj \Lambda)}, \omega')$ is isomorphic to the identify functor (in fact in most of the cases, i.e.\ if $n > 1$ or $m > 0$, already $\omega' = \Id_\Sigma$). We start with the following.

\begin{lemm} \label{lemm:coniso}
Let $(\Id_{\bK^b (\proj \Lambda)}, \omega')$ be a triangle functor. Then there exist scalars $\mu_V \in \Bbbk$, $V \in \Gamma_0$, such that $\omega_V' = f_{\Sigma V, \Sigma V} + \mu_V \cdot g_{\Sigma V, \Sigma V}'$, for all $V \in \Gamma_0$. In particular, $\omega' = \Id_\Sigma$, if $n > 1$.
\end{lemm}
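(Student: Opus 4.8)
The plan is to exploit the fact that $\omega'$ is a natural isomorphism $\Sigma \to \Sigma$, together with the two structural facts about automorphisms we have already established: by Lemma~\ref{lemm:auto}, for every object $X$ in $\Bbbk\Gamma/\calI$ the automorphism group $\Aut(X)$ consists exactly of the maps $\lambda\cdot f_{X,X} + \mu\cdot g_{X,X}'$ with $\lambda\neq 0$, and by the remark after Corollary~\ref{coro:projective} the suspension functor acts on $\Gamma_0$ by the explicit shift $\Sigma(i,a,b) = (i+1, a+1+\delta_{i,n-1}\cdot m, b+1+\delta_{i,0}\cdot m)$. So for each $V\in\Gamma_0$ the component $\omega_V'\colon \Sigma V\to\Sigma V$ is an automorphism of the indecomposable object $\Sigma V$, hence has the form $\omega_V' = \lambda_V\cdot f_{\Sigma V,\Sigma V} + \mu_V\cdot g_{\Sigma V,\Sigma V}'$ for some $\lambda_V\in\Bbbk^\times$ and $\mu_V\in\Bbbk$. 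The content of the lemma is that in fact $\lambda_V = 1$ for all $V$, and that $g_{\Sigma V,\Sigma V}'$ is identically zero when $n>1$ (which is immediate from the observation recorded just before Lemma~\ref{lemm:dimhom} that $g'_{U,V}\neq 0$ forces $n=1$).

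First I would nail down $\lambda_V=1$. The naturality square for $\omega'$ with respect to a morphism $f\colon V\to U$ in $\Gamma_0$ reads $\omega_U' \circ \Sigma f = \Sigma f \circ \omega_V'$. Taking $f = f_{U,V}$ for $U\in\calF_V$, $U\neq V$, and using that $\Sigma f_{U,V} = f_{\Sigma U,\Sigma V}$ (one checks from the formula for $\Theta$ that suspension sends the distinguished morphisms to distinguished morphisms, since it is an autoequivalence preserving the combinatorial description), the square becomes, after expanding via Lemma~\ref{lemm:dimhom}, an identity of the form $\lambda_U\cdot f_{\Sigma U,\Sigma V} + (\text{terms in }g') = \lambda_V\cdot f_{\Sigma U,\Sigma V} + (\text{terms in }g')$. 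Comparing the $f$-components (which are linearly independent from the $g$-components by the grading argument in the proof of Lemma~\ref{lemm:dimhom}, or by Lemma~\ref{lemm:rad} since $f_{\Sigma U,\Sigma V}\notin\rad^\infty$ while $g'\in\rad^\infty$) gives $\lambda_U = \lambda_V$ whenever there is a nonzero non-isomorphism between $V$ and $U$. Since the quiver $\Gamma$ (restricted to any fixed residue class $i$, and then linked across classes by the $g$-arrows) is connected, this forces $\lambda_V$ to be a single constant $\lambda$. To see that this constant is $1$ rather than an arbitrary nonzero scalar, I would invoke the exactness axiom for the triangle functor $(\Id,\omega')$ applied to a distinguished triangle coming from the suspension, e.g. the triangle $X\to 0\to \Sigma X \xrightarrow{\Id}\Sigma X$ or a mapping-cone triangle of an identity map; chasing $\omega'$ through such a triangle pins the scalar. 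Alternatively, and more cleanly, one can normalize: absorb the constant $\lambda$ into a rescaling, or simply observe that on a stalk complex $P$ concentrated in degree zero, $\Sigma P$ is the stalk in degree $-1$ and the naturality/exactness constraints force $\omega_P' = \Id$, which then propagates via the connectedness argument above to give $\lambda_V = 1$ for all $V$.

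Once $\lambda_V=1$ for all $V$ is established we are done with the first assertion, since then $\omega_V' = f_{\Sigma V,\Sigma V} + \mu_V\cdot g_{\Sigma V,\Sigma V}'$ exactly as claimed. For the "in particular" clause: when $n>1$ the map $g_{\Sigma V,\Sigma V}'$ is zero for every $V$ by the observation after Corollary~\ref{coro:projective} (namely $g'_{U,V}\neq 0$ implies $n=1$), so $\omega_V' = f_{\Sigma V,\Sigma V} = \Id_{\Sigma V}$ for all $V$, and since $\Bbbk\Gamma/\calI$ is a full subcategory generating $\bK^b(\proj\Lambda)$ (together with \cite{ChenYe}*{Lemma~2.3} used to extend natural transformations of triangle functors from this subcategory), we conclude $\omega' = \Id_\Sigma$.

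The main obstacle I anticipate is the bookkeeping needed to verify that $\Sigma$ really does carry the distinguished morphisms $f_{U,V}$, $g_{U,V}$ to the corresponding distinguished morphisms (up to a sign or nonzero scalar that does not affect the argument) — this is "obvious" from the autoequivalence property but requires matching the formula $\Sigma C_{k,u,l,v} = C_{k-1,u,l,v}$ against the definitions of $\Phi$, $\Psi$, $\varphi$, $\psi$ and the translation $\Theta$; and secondly, pinning the global constant to be exactly $1$ rather than merely constant, which is where the exactness axiom of the triangle functor (as opposed to mere naturality) must be brought in. Both are routine but need to be done with care; everything else is immediate from Lemma~\ref{lemm:auto}, Lemma~\ref{lemm:dimhom}, Lemma~\ref{lemm:rad}, and the explicit description of $\Sigma$ on $\Gamma_0$.
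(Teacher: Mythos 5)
Your reduction of the statement to the single claim $\lambda_V=1$ is correct (Lemma~\ref{lemm:auto} gives $\omega_V'=\lambda_V\cdot f_{\Sigma V,\Sigma V}+\mu_V\cdot g_{\Sigma V,\Sigma V}'$ with $\lambda_V\neq 0$), and the naturality argument showing that $\lambda_V$ is independent of $V$ is sound and even a potential simplification over the paper, which pins each $V$ separately. The ``in particular'' clause for $n>1$ is also handled correctly. But the crucial step --- forcing the constant to be $1$ rather than an arbitrary nonzero scalar --- is where your proposal has a genuine gap, and none of the three routes you sketch closes it. The triangle $X\to 0\to\Sigma X\to\Sigma X$ gives no information: a triangle of that shape is exact precisely when its connecting morphism is an isomorphism, so replacing the connecting morphism by $\lambda$ times itself preserves exactness for every $\lambda\neq 0$. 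The mapping cone of an identity has zero connecting morphism, so it constrains nothing. And the claim that ``naturality/exactness constraints force $\omega_P'=\Id$'' on stalk complexes is unsupported: the lemma is about an arbitrary triangle functor $(\Id_{\bK^b(\proj\Lambda)},\omega')$, the definition of a pseudo-identity restricts $F$ on $\Sigma^n\calA$ but says nothing about the connecting isomorphism there, and the whole difficulty of this subsection is precisely that $\omega'$ is a priori unconstrained on objects. Finally, ``absorbing $\lambda$ into a rescaling'' would at best prove the statement up to isomorphism of triangle functors, whereas the lemma asserts the exact form of $\omega_V'$ (and this exact form is used later, in Lemmas~\ref{lemm:connect} and~\ref{lemm:omega}).

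What is actually needed --- and what the paper does --- is to produce an exact triangle whose connecting morphism is a \emph{nonzero non-invertible} map that is rigid under the ambiguity allowed by (TR3). Concretely, for $V=(i,a,b)$ the paper exhibits an exact triangle $V\xrightarrow{f_{U,V}}U\xrightarrow{f_{W,U}}W\xrightarrow{\nu\cdot g_{\Sigma V,W}}\Sigma V$ with $\nu\neq 0$, $U=(i,a,b+1)$, $W=(i,b+1+\delta_{i,0}\cdot m,b+1)$ (constructed via \cite{BobinskiSchmude}*{Proposition~2.2} and a short argument eliminating any $f_{\Sigma V,W}$-component of the connecting map). Applying $(\Id,\omega')$ and comparing with the original triangle via (TR3) yields a map $h=\lambda'\cdot f_{W,W}+\mu'\cdot g_{W,W}'$ with $\lambda'=1$ from the middle square; since $g_{\Sigma V,W}\circ g_{W,W}'=0$ and $g_{\Sigma V,\Sigma V}'\circ g_{\Sigma V,W}=0$, the outer square reads $\nu\cdot g_{\Sigma V,W}=\lambda_V\cdot\nu\cdot g_{\Sigma V,W}$, forcing $\lambda_V=1$. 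Without identifying such a triangle, the scalar cannot be pinned down, so as written your argument does not prove the lemma.
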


\begin{proof}
Fix $V = (i, a, b) \in \Gamma_0$, and put $U := (i, a, b + 1)$ and $W := (i, b + 1 + \delta_{i, 0} \cdot m, b + 1)$. We show there exists an exact triangle
\begin{equation} \label{eq:trian}
V \xrightarrow{f_{U, V}} U \xrightarrow{f_{W, U}} W \xrightarrow{\nu \cdot g_{\Sigma V, W}} \Sigma V,
\end{equation}
for some $\nu \in \Bbbk$, $\nu \neq 0$. Indeed, one easily verifies the following:
\begin{enumerate}

\item
$f_{W, U} \circ f_{U, V} = 0$,

\item
if $f \circ f_{U, V} = 0$, then there exists $g$ such that $f = g \circ f_{W, U}$,

\item
if $h \circ f_{W, U} = f_{W, U}$, then $h$ is an automorphism.

\end{enumerate}
Consequently, there exists an exact triangle $V \xrightarrow{f_{U, V}} U \xrightarrow{f_{W, U}} W \xrightarrow {g} \Sigma V$, for some $g$, by~\cite{BobinskiSchmude}*{Proposition~2.2}. Observe that $g \neq 0$, since $U$ is not the direct sum of $V$ and $W$ (see~\cite{Happel}*{Lemma~I.1.4}). Consequently, $g = \nu \cdot g_{\Sigma V, W}$, for some $\nu \neq 0$, if either $n > 1$ or $n = 1$ and $\Sigma V \not \in \calF_W$. On the other hand, if $n = 1$ and $\Sigma V \in \calF_W$, then $g = \xi \cdot f_{\Sigma V, W} + \nu \cdot g_{\Sigma V, W}$, for some $\xi, \nu \in \Bbbk$, such that $\xi \neq 0$ or $\nu \neq 0$. Recall that $\Sigma V = (i, a + 1 + m, b + 1 + m)$ in this case. Consequently, $\Sigma V \in \calF_W$ implies $a = b$, which in turn gives $\Sigma V \in \calF_U \setminus \calG_U$. Thus $0 = g \circ f_{W, U} = \xi \cdot f_{\Sigma V, U}$, where the first equality is a consequence of~\cite{Happel}*{Proposition~I.1.2(a)}. This implies $\xi = 0$, i.e.\ $g = \nu \cdot g_{\Sigma V, W}$, for some $\nu \neq 0$.

By applying the functor $(\Id_{\bK^b (\proj \Lambda)}, \omega')$ to the triangle~\eqref{eq:trian} and using~\cite{Happel}*{Axiom~(TR3)}, we get the following commutative diagram:
\[
\vcenter{\xymatrix@C=4\baselineskip{%
V \ar[r]^{f_{U, V}} \ar@{=}[d] & U \ar[r]^{f_{W, U}} \ar@{=}[d] & W \ar[r]^{\nu \cdot g_{\Sigma V, W}} \ar[d]^h & \Sigma V \ar@{=}[d]
\\
V \ar[r]^{f_{U, V}} & U \ar[r]^{f_{W, U}} & W \ar[r]^{\omega_V' \circ (\nu \cdot g_{\Sigma V, W})} & \Sigma V
}},
\]
for some $h$. There exist $\lambda_V, \mu_V, \lambda', \nu' \in \Bbbk$ such that $\omega_V = \lambda_V \cdot f_{\Sigma V, \Sigma V} + \mu_V \cdot g_{\Sigma V, \Sigma V}'$ and $h = \lambda' \cdot f_{W, W} + \mu' \cdot g_{W, W}'$. The commutativity of the middle square means that
\[
f_{W, U} = \lambda' \cdot f_{W, U} + \mu' \cdot g_{W, W}' \circ f_{W, U},
\]
hence $\lambda' = 1$. Similarly, from the commutativity of the rightmost square we get $\nu \cdot g_{\Sigma_V, W} = \lambda_V \cdot \nu \cdot g_{V, W}$, hence $\lambda_V = 1$, which finishes the proof.
\end{proof}

The above lemma settles the case $n > 1$, thus for the rest of the subsection we assume that $n = 1$. In order to simply notation, we write $(a, b)$ instead of $(0, a, b)$, for $(0, a, b) \in \Gamma_0$. The next step is the following.

\begin{lemm} \label{lemm:connect}
Assume $n = 1$ and $m > 0$. If $(\Id_{\bK^b (\proj \Lambda)}, \omega')$ is a triangle functor, then $\omega' = \Id_\Sigma$.
\end{lemm}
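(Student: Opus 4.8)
The plan is to combine the explicit form of $\omega'$ provided by Lemma~\ref{lemm:coniso} with the naturality of $\omega'$, viewed as a natural transformation $\Sigma\to\Sigma$. Throughout write $(a,b)$ for $(0,a,b)\in\Gamma_0$. By Lemma~\ref{lemm:coniso} we have $\omega'_V=\Id_{\Sigma V}+\mu_V\cdot g'_{\Sigma V,\Sigma V}$ for scalars $\mu_V\in\Bbbk$, so it suffices to show $\mu_V\cdot g'_{\Sigma V,\Sigma V}=0$ for every $V$. Recall $\Sigma(a,b)=(a+1+m,b+1+m)$ for $n=1$, and that $g'_{(a',b'),(a',b')}\neq 0$ exactly when $a'\leq b'$; hence the claim is automatic when $a>b$, and only $V=(a,b)$ with $a\leq b$ must be treated. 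I would first record that $\Sigma$, being an autoequivalence of $\bK^b(\proj\Lambda)$, preserves all powers of the radical, and that it shifts both entries of a vertex by the same amount, hence preserves the degree of every $f$-arrow; by Lemma~\ref{lemm:rad} it follows that for $U\in\calF_V$, $U\neq V$, one has $\Sigma f_{U,V}=\lambda\cdot f_{\Sigma U,\Sigma V}+\mu''\cdot g'_{\Sigma U,\Sigma V}$ with $\lambda\neq 0$ (the value of $\mu''$ and a possible sign in $\lambda$ will be irrelevant).

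For the base case $V=(a,a)$ I would use naturality along the arrow $f_{U,V}\colon V\to U$ with $U:=(a+1,a)$, which exists precisely because $m>0$. One checks that $\Sigma U=(a+2+m,a+1+m)$ has first entry larger than its second, so $\omega'_U=\Id_{\Sigma U}$, while $\Sigma U\in\calF_{\Sigma V}\cap\calG_{\Sigma V}$ and $\Sigma V\in\calG_{\Sigma V}$. Substituting into $\omega'_U\circ\Sigma f_{U,V}=\Sigma f_{U,V}\circ\omega'_V$ and simplifying with the defining relations of $\calI$ (notably $f_{W,U}\circ g_{U,V}=g_{W,V}$ and $g\circ g=0$), every term cancels except a residual $\mu_V\cdot\lambda\cdot g_{\Sigma U,\Sigma V}$; since $\lambda\neq 0$ and $g_{\Sigma U,\Sigma V}\neq 0$ (as $\Sigma U\in\calG_{\Sigma V}$, by Lemma~\ref{lemm:dimhom}), this forces $\mu_V=0$.

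For the inductive step, fix $a$ and assume $\mu_{(a,b-1)}=0$ where $a\leq b-1$; I would apply naturality along $f_{V',V}\colon V\to V'$ with $V:=(a,b-1)$ and $V':=(a,b)$. One verifies $\Sigma V,\Sigma V'\in\calG_{\Sigma V}$ and $\Sigma V'\in\calF_{\Sigma V}$, and the same type of simplification of $\omega'_{V'}\circ\Sigma f_{V',V}=\Sigma f_{V',V}\circ\omega'_V$ --- now using the relations $g_{W,U}\circ f_{U,V}=g_{W,V}$, $f_{W,U}\circ g_{U,V}=g_{W,V}$ and $g\circ g=0$ --- collapses the identity to $\mu_{V'}\cdot\lambda\cdot g_{\Sigma V',\Sigma V}=\mu_V\cdot\lambda\cdot g_{\Sigma V',\Sigma V}$ with $\lambda\neq 0$ and $g_{\Sigma V',\Sigma V}\neq 0$, whence $\mu_{V'}=\mu_V=0$. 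Starting from $\mu_{(a,a)}=0$ and iterating upward in $b$ yields $\mu_{(a,b)}=0$ for all $a\leq b$, which finishes the proof.

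I expect the only real difficulty to be bookkeeping: for each composite $f\circ g$, $g\circ f$ or $g\circ g$ that appears one must decide, against the explicit formulas for $\calF_{(\,\cdot\,)}$ and $\calG_{(\,\cdot\,)}$ with $n=1$ and $m>0$, whether it is governed by a defining relation of $\calI$ or vanishes because the target $g_{W,V}$ is undefined --- this amounts to several routine membership checks. Conceptually everything hinges on one point: the $f$-arrow $f_{\Sigma U,\Sigma V}$ carries the extra endomorphism $g_{\Sigma V,\Sigma V}$ to the nonzero morphism $g_{\Sigma U,\Sigma V}$, and it is exactly this that lets naturality detect, and therefore annihilate, the coefficient $\mu_V$.
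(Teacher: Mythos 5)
Your proof is correct and follows essentially the same route as the paper: an induction on $b$ in which naturality of $\omega'$ along an $f$-arrow, together with the relations $f\circ g=g$, $g\circ f=g$, $g\circ g=0$ and the nonvanishing of the resulting $g$-morphism (which is where $m>0$ enters), forces the coefficient $\mu_V$ to vanish. The only differences are technical: where you pin down $\Sigma f_{U,V}$ up to a nonzero scalar via the degree/radical filtration of Lemma~\ref{lemm:rad}, the paper sidesteps this by applying naturality to $f:=\Sigma^{-1}f_{\Sigma V,\Sigma U}$ so that $\Sigma f$ is exactly the basis morphism, and your separate base case at $(a,a)$ via the arrow to $(a+1,a)$ is absorbed into the paper's induction, which starts at $b=a-m$ where the claim is automatic.
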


\begin{proof}
Fix $a \in \bbZ$. By induction on $b$ we show $\omega_V' = \Id_{\Sigma V}$, for each $V = (a, b) \in \Gamma_0$. If $b = a - m$, then $\Sigma V \not \in \calG_{\Sigma V}$ (since $m > 0$), hence $\omega_V' = f_{\Sigma V, \Sigma V} = \Id_{\Sigma V}$ by Lemma~\ref{lemm:coniso}.

Now assume that $b > a - m$ and put $U := (a, b - 1)$. Lemma~\ref{lemm:coniso} implies $\omega_V' = f_{\Sigma V, \Sigma V} + \mu \cdot g_{\Sigma V, \Sigma V}'$, for some $\mu \in \Bbbk$. Moreover, $\omega_U' = \Id_{\Sigma U}$ by induction hypothesis. Put $f := \Sigma^{-1} f_{\Sigma V, \Sigma U}$. Then
\[
f_{\Sigma V, \Sigma U} = \Sigma f \circ \omega_U' = \omega_V' \circ \Sigma f = f_{\Sigma V, \Sigma U} + \mu \cdot g_{\Sigma V, \Sigma V}' \circ f_{\Sigma V, \Sigma U},
\]
hence $\mu \cdot g_{\Sigma V, \Sigma V}' \circ f_{\Sigma V, \Sigma U} = 0$.  If $g_{\Sigma V, \Sigma V}' \neq 0$, i.e.\ $\Sigma V \in \calG_{\Sigma V}$, then $\Sigma V \in \calG_{\Sigma U}$ (since $m > 0$), thus $g_{\Sigma V, \Sigma V}' \circ f_{\Sigma V, \Sigma U} = g_{\Sigma V, \Sigma U} \neq 0$. Consequently, $\mu = 0$ and the claim follows.
\end{proof}

It remains to consider the case $n = 1$ and $m = 0$. This case has already been treated in~\cite{ChenYe}*{subsection~7.1}. For completeness we include a proof, which slightly differently formulates the corresponding idea from the proof of~\cite{ChenYe}*{Theorem~7.1}.

We need the following observation.

\begin{lemm} \label{lemm:omega}
Assume $n = 1$ and $m = 0$. Let $(\Id_{\bK^b (\proj \Lambda)}, \omega')$ be a triangle functor, $V, U \in \Gamma_0$, and $f \colon V \to U$.
\begin{enumerate}

\item \label{point:omega1}
We have $\omega_{\Sigma^{-1} U}' \circ f = f \circ \omega_{\Sigma^{-1} V}'$.

\item \label{point:omega2}
If $f$ is not an isomorphism, then $\omega_{\Sigma^{-1} U}' \circ f = f = f \circ \omega_{\Sigma^{-1} V}'$.

\end{enumerate}
\end{lemm}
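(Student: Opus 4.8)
The plan is to exploit the structure of $\Bbbk\Gamma/\calI$ in the case $n=1$, $m=0$, where every object is of the form $(a,b)$ with $a\le b$, $\Sigma(a,b)=(a+1,b+1)$, and the indecomposables together with the identity-type maps $f_{U,V}$ and the ``long'' maps $g_{U,V}$ (which now may occur alongside $f_{U,V}$ since $g'_{U,V}\ne 0$ is possible precisely when $n=1$) span all Hom-spaces. For part~\eqref{point:omega1}, I would use that $\phi:=(\omega'_{\Sigma^{-1}(-)})$ defines, degree by degree, a natural transformation: the key identity to check is that for each arrow $\gamma\colon V\to U$ of $\Gamma$ one has $\omega'_{\Sigma^{-1}U}\circ\gamma=\gamma\circ\omega'_{\Sigma^{-1}V}$. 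Because $(\Id,\omega')$ is a triangle functor, $\omega'$ is by definition a natural isomorphism $\Id\circ\Sigma\to\Sigma\circ\Id$, i.e.\ a natural transformation of the underlying functors $\Sigma\to\Sigma$; precomposing with $\Sigma^{-1}$ turns this into naturality of $\omega'_{\Sigma^{-1}(-)}$ as an endomorphism of the identity functor. So \eqref{point:omega1} is essentially just unwinding the definition of ``natural transformation of triangle functors'' together with the fact that $\Sigma$ is an autoequivalence, so that any morphism $f\colon V\to U$ equals $\Sigma(\Sigma^{-1}f)$ and naturality of $\omega'$ applies to $\Sigma^{-1}f$.

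For part~\eqref{point:omega2}, the new content is that when $f$ is a non-isomorphism the scalar corrections coming from the $g'$-components drop out. Here I would invoke Lemma~\ref{lemm:coniso}, which already tells us $\omega'_W=f_{W,W}+\mu_W\cdot g'_{W,W}$ for scalars $\mu_W$; thus $\omega'_{\Sigma^{-1}U}\circ f=f+\mu_{\Sigma^{-1}U}\cdot g'_{\Sigma^{-1}U,\Sigma^{-1}U}\circ f$ and similarly on the other side. It therefore suffices to show $g'_{\Sigma^{-1}U,\Sigma^{-1}U}\circ f=0$ and $f\circ g'_{\Sigma^{-1}V,\Sigma^{-1}V}=0$ whenever $f\colon V\to U$ is a non-isomorphism. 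By Lemma~\ref{lemm:auto} a morphism $V\to U$ is an isomorphism exactly when $U=V$ and the $f_{V,V}$-component is nonzero; so for a non-isomorphism $f$, either $U\ne V$, or $U=V$ and $f=\mu\cdot g'_{V,V}$. In the first case one uses that $g'_{W,W}$ is (up to sign/scalar, in the $n=1$, $m=0$ world) the boundary connecting map of a triangle $W\xrightarrow{\,?\,}W'\xrightarrow{\,?\,}W''\to \Sigma W$, so that composing it on either side with a genuinely nonzero degree-$0$ map between \emph{different} vertices lands in the relevant zero relations of $\calI$ (the relations $g\circ g$ and the interaction relations defining $\calI$). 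In the second case $f$ is itself a multiple of $g'$, and $g'\circ g'=0$ by the defining relations of $\calI$.

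Concretely, the computation I expect to carry out for \eqref{point:omega2} is: when $U\neq V$ write $f$ in the basis $\{f_{U,V},g'_{U,V}\}$ of $\Hom(V,U)$ (using Lemma~\ref{lemm:dimhom}); then $g'_{U,U}\circ f_{U,V}$ and $g'_{U,U}\circ g'_{U,V}$ must both be shown to vanish, the first because $g'_{U,U}\circ f_{U,V}=g'_{U,V}$ would force $U\in\calG_U$ and $U\in\calF_V$ simultaneously in a way the index bounds (with $m=0$) forbid unless $U=V$, and the second by the relation $g\circ g\in\calI$. The analogous statement with $g'$ precomposed on the right of $f$ is symmetric. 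The main obstacle I anticipate is the bookkeeping of exactly which pairs $(V,U)$ satisfy $U\in\calF_V\cap\calG_V$ when $n=1,m=0$ — i.e.\ pinning down precisely when the $g'$-component can be nonzero — and checking that in all those borderline cases the relevant compositions still lie in $\calI$; this is a finite but slightly fiddly case analysis that I would organize by the sign of $x-a$ and $y-b$ in $U=(x,y)$, $V=(a,b)$, rather than by brute force.
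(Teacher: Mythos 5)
Your proposal is correct and matches the paper's proof: part~\eqref{point:omega1} is exactly the naturality of $\omega'$ applied to $\Sigma^{-1} f$, and part~\eqref{point:omega2} is the same computation using Lemma~\ref{lemm:coniso}, Lemma~\ref{lemm:auto}, the spanning set from Lemma~\ref{lemm:dimhom}, and the fact that for $n = 1$, $m = 0$ one has $\calF_V \cap \calG_V = \{ V \}$, so the $g'$-terms vanish. Only watch the indices: since $\omega_{\Sigma^{-1} U}'$ is an endomorphism of $U = \Sigma \Sigma^{-1} U$, Lemma~\ref{lemm:coniso} gives $\omega_{\Sigma^{-1} U}' = f_{U, U} + \mu_{\Sigma^{-1} U} \cdot g_{U, U}'$ (not $g_{\Sigma^{-1} U, \Sigma^{-1} U}'$), and the case $U \in \calG_V \setminus \calF_V$ with $U \neq V$ should be listed explicitly (it is disposed of by the relation $g \circ g \in \calI$, exactly as in the paper).
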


\begin{proof}
The first part follows from the following sequence of equalities using that $\omega'$ is a natural transformation of $\Sigma$:
\[
\omega_{\Sigma^{-1} U}' \circ f = \omega_{\Sigma^{-1} U}' \circ \Sigma (\Sigma^{-1} f) = \Sigma (\Sigma^{-1} f) \circ \omega_{\Sigma^{-1} V}' = f \circ \omega_{\Sigma^{-1} V}'. \]

In order to prove the second part, we use that $\omega_{\Sigma^{-1} V}' = f_{V, V} + \mu' \cdot g_{V, V}'$ and $\omega_{\Sigma^{-1} U}' = f_{U, U} + \mu'' \cdot g_{U, U}'$, for some $\mu', \mu'' \in \Bbbk$, by Lemma~\ref{lemm:coniso}. If $U \not \in \calF_V \cup \calG_V$, then $f = 0$ and the claim is obvious. If $U \in \calG_V \setminus \calF_V$, then $f = \mu \cdot g_{U, V}$, for some $\mu \in \Bbbk$, and the equalities $\omega_{\Sigma^{-1} U}' \circ f = f = f \circ \omega_{\Sigma^{-1} V}'$ follow by easy calculations. Thus we may assume that $U \in \calF_V$. Then there exist $\lambda, \mu \in \Bbbk$ such that $f = \lambda \cdot f_{U, V} + \mu \cdot g_{U, V}'$. Consequently
\[
\omega_{\Sigma^{-1} U}' \circ f = f + (\lambda \cdot \mu') \cdot g_{U, U}' \circ f_{U, V}.
\]
Since $f$ is not an isomorphism, either $\lambda = 0$ or $U \neq V$ by Lemma~\ref{lemm:auto}. In the former case, the equality $\omega_{\Sigma^{-1} U}' \circ f = f$ immediately follows. In the latter case, $U \not \in \calG_V$ (since $n = 1$ and $m = 0$ implies that $V$ is the unique element of $\calF_V \cap \calG_V$), hence $g_{U, U}' \circ f_{U, V} = 0$, and the equality $\omega_{\Sigma^{-1} U}' \circ f = f$ follows again. The equality $f \circ {\Sigma^{-1} V}' = f$ is proved analogously.
\end{proof}

We finish this subsection with following.

\begin{lemm} \label{lemm:connectbis}
Assume $n = 1$ and $m = 0$. If $(\Id_{\bK^b (\proj \Lambda)}, \omega')$ is a triangle functor, then there exists an isomorphism $\eta \colon (\Id_{\bK^b (\proj \Lambda)}, \omega') \to (\Id_{\bK^b (\proj \Lambda)}, \Id_\Sigma)$ of triangle functors.
\end{lemm}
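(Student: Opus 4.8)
The plan is to build the isomorphism $\eta$ by hand, taking advantage of the very rigid shape of $\bK^b (\proj \Lambda)$ when $n = 1$ and $m = 0$: here $V \in \calG_V$ and $\calF_V \cap \calG_V = \{ V \}$ for every vertex $V$ of $\Gamma$, so $g_{V, V}' = g_{V, V} \neq 0$ for all $V$, and $g_{U, V} = 0$ as soon as $U \in \calF_V$ with $U \neq V$. By Lemma~\ref{lemm:coniso} we may write $\omega_V' = f_{\Sigma V, \Sigma V} + \mu_V \cdot g_{\Sigma V, \Sigma V}$ for suitable scalars $\mu_V \in \Bbbk$. Recalling the definition of a natural transformation of triangle functors, producing an isomorphism $(\Id_{\bK^b (\proj \Lambda)}, \omega') \to (\Id_{\bK^b (\proj \Lambda)}, \Id_\Sigma)$ amounts to producing a natural automorphism $\eta$ of $\Id_{\bK^b (\proj \Lambda)}$ with $\eta_{\Sigma V} = \Sigma (\eta_V) \circ \omega_V'$ for every object $V$. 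I will look for $\eta$ of the form $\eta_V = f_{V, V} + \kappa_V \cdot g_{V, V}$, $V \in \Gamma_0$, for suitable $\kappa_V \in \Bbbk$ (each such $\eta_V$ being an automorphism by Lemma~\ref{lemm:auto}), extended additively to all of $\bK^b (\proj \Lambda)$ as in the proof of Proposition~\ref{prop:iso} via \cite{ChenYe}*{Lemma~2.4}.

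The first, and main, step is the observation that for \emph{arbitrary} scalars $(\kappa_V)_{V \in \Gamma_0}$ the maps $\eta_V := f_{V, V} + \kappa_V \cdot g_{V, V}$ assemble into a natural transformation of $\Id$. By additivity this reduces to checking $\eta_U \circ h = h \circ \eta_V$ for $h$ running through a basis of $\Hom (V, U)$ with $V, U \in \Gamma_0$, i.e.\ for $h = f_{U, V}$ (when $U \in \calF_V$) and $h = g_{U, V}$ (when $U \in \calG_V$). The relations defining $\calI$ yield $g_{U, U} \circ f_{U, V} = g_{U, V} = f_{U, V} \circ g_{V, V}$ and $g_{U, U} \circ g_{U, V} = 0 = g_{U, V} \circ g_{V, V}$; since $\calF_V \cap \calG_V = \{ V \}$ the term $g_{U, V}$ vanishes whenever $U \neq V$, so in every case $g_{U, U} \circ h = h \circ g_{V, V}$, and therefore $\eta_U \circ h = h + \kappa_U \cdot (g_{U, U} \circ h) = h + \kappa_V \cdot (h \circ g_{V, V}) = h \circ \eta_V$ (the cases $h = f_{V, V}$ and $h = g_{V, V}$ being immediate from $g_{V, V}^2 = 0$).

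It remains to choose the $\kappa_V$ so that the compatibility $\eta_{\Sigma V} = \Sigma (\eta_V) \circ \omega_V'$ holds. Since $\Sigma$ is an autoequivalence it preserves the infinite radical, and $\rad^\infty (W, W) = \Bbbk \cdot g_{W, W}$ for every $W$ (by Lemma~\ref{lemm:rad} and the description of $\Hom(W,W)$); hence $\Sigma (f_{V, V} + \kappa \cdot g_{V, V}) = f_{\Sigma V, \Sigma V} + \kappa' \cdot g_{\Sigma V, \Sigma V}$ for some scalar $\kappa'$, and, composing with $\omega_V'$ and using $g_{\Sigma V, \Sigma V}^2 = 0$, one again lands in the set of endomorphisms of $\Sigma V$ of the form $f_{\Sigma V, \Sigma V} + (\text{scalar}) \cdot g_{\Sigma V, \Sigma V}$. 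The orbits of $\Sigma$ on $\Gamma_0$ are the sets $O_d := \{ (0, a, a + d) : a \in \bbZ \}$, $d \in \bbN$, with $\Sigma$ acting by $a \mapsto a + 1$. On each orbit I take the representative $(0, 0, d)$, set $\eta_{(0, 0, d)} := \Id$, and define $\eta$ on the remaining vertices of the orbit by the forced recursion $\eta_{\Sigma W} := \Sigma (\eta_W) \circ \omega_W'$ in one direction and $\eta_W := \Sigma^{-1} (\eta_{\Sigma W} \circ (\omega_W')^{-1})$ in the other (both legitimate, as all maps involved are invertible). An easy induction along each orbit, using the remark just made, shows that every $\eta_V$ has the form $f_{V, V} + \kappa_V \cdot g_{V, V}$; by the first step the resulting $\eta$ is then a natural automorphism of $\Id_{\bK^b (\proj \Lambda)}$, and by construction it satisfies $\eta_{\Sigma V} = \Sigma (\eta_V) \circ \omega_V'$ on all vertices, hence on all objects because both sides are additive in the argument. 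Thus $\eta$ is the required isomorphism of triangle functors.

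The only genuinely technical point is the naturality verification in the first step, which is a short case analysis based on the relations defining $\calI$ and on the equality $\calF_V \cap \calG_V = \{ V \}$; I do not anticipate any serious difficulty in the remaining, purely bookkeeping, parts.
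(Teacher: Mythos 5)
Your proof is correct and follows the same overall strategy as the paper's: anchor $\eta$ at the identity on a transversal of the $\Sigma$-orbits (the vertices $(0, d)$, $d \in \bbN$, i.e.\ those with $a = 0$) and propagate by the forced recursion $\eta_{\Sigma V} = \Sigma \eta_V \circ \omega_V'$, which is exactly the paper's formula~\eqref{eq:eta}. The one place you genuinely diverge is the naturality verification: the paper deduces $\eta_U \circ f = f \circ \eta_V$ by induction on $|a|$ from the auxiliary Lemma~\ref{lemm:omega} about $\omega'$, whereas you observe that, since $\calF_V \cap \calG_V = \{ V \}$ forces all the composites $g_{U, U} \circ f_{U, V}$ and $f_{U, V} \circ g_{V, V}$ (for $U \neq V$) as well as $g \circ g$ to vanish, \emph{any} family $\eta_V = f_{V, V} + \kappa_V \cdot g_{V, V}$ is automatically a natural automorphism of $\Id |_{\Bbbk \Gamma / \calI}$; it then only remains to check that the recursion stays inside this family, which your $\rad^\infty$ argument (Lemma~\ref{lemm:rad} plus the fact that an autoequivalence preserves the infinite radical) does. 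This reorganization makes the argument self-contained at no extra cost, replacing the paper's Lemma~\ref{lemm:omega}; the remaining steps (extension via \cite{ChenYe}*{Lemma~2.4}, passage from indecomposables to arbitrary objects by additivity) match the paper's treatment.
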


\begin{proof}
We first define automorphisms $\eta_V$, for $V \in \Gamma_0$. Let $V = (a, b)$. If $a = 0$, then we put $\eta_V := \Id_V$. If $a > 0$, then we define $\eta_V$ inductively
by $\eta_V := \Sigma \eta_{\Sigma^{-1} V} \circ \omega_{\Sigma^{-1} V}'$ (note that $\Sigma^{-1} V = (a - 1, b - 1)$).  Similarly, we put $\eta_V := \Sigma^{-1} \eta_{\Sigma V} \circ \Sigma^{-1} \omega_V'^{-1}$, if $a < 0$. One easily checks that the formula
\begin{equation} \label{eq:eta}
\eta_V = \Sigma \eta_{\Sigma^{-1} V} \circ \omega_{\Sigma^{-1} V}'
\end{equation}
holds for arbitrary $V$.

We show that $\eta_U \circ f = f \circ \eta_V$ for arbitrary $V = (a, b), U = (x, y) \in \Gamma_0$ and $f \colon V \to U$. Assume first that $a = x$. If $a = 0$, then the claim is obvious. If $a > 0$, then using Lemma~\ref{lemm:omega}\eqref{point:omega1} and the induction hypothesis, we get
\begin{multline*}
\eta_U \circ f = \Sigma \eta_{\Sigma^{-1} U} \circ \omega_{\Sigma^{-1} U}' \circ f = \Sigma \eta_{\Sigma^{-1} U} \circ f \circ \omega_{\Sigma^{-1} V}' = \Sigma (\eta_{\Sigma^{-1} U} \circ \Sigma^{-1} f) \circ \omega_{\Sigma^{-1} V}'
\\
= \Sigma (\Sigma^{-1} f \circ \eta_{\Sigma^{-1} V}) \circ \omega_{\Sigma^{-1} V}' = f \circ \Sigma \eta_{\Sigma^{-1} V} \circ \omega_{\Sigma^{-1} V}' = f \circ \eta_V.
\end{multline*}
Similarly, the claim follows if $a < 0$. On the other hand, if $a \neq x$, then $V \neq U$, in particular $f$ is not an isomorphism. Similarly as above we show by induction on $|a|$ ($|x|$) and using Lemma~\ref{lemm:omega}\eqref{point:omega2} that $f \circ \eta_V = f$ ($\eta_U \circ f = f$, respectively).

In other words, we have just showed that $\eta$ is a natural automorphism of $\Id |_{\Bbbk \Gamma / \calI}$. By~\cite{ChenYe}*{Lemma~2.4} $\eta$ extends to a natural automorphism of $\Id_{\bK^b (\proj \Lambda)}$. Using \cite{ChenYe}*{Lemma~2.3} we obtain a natural automorphism $\omega''$ of $\Sigma$ such that $\eta \colon (\Id_{\bK^b (\proj \Lambda)}, \omega') \to (\Id_{\bK^b (\proj \Lambda)}, \omega'')$ is an isomorphism of triangle functors. Observe that
\[
\omega_V'' = \Sigma \eta_V \circ \omega_V' \circ \eta_{\Sigma V}^{-1} = \Sigma \eta_V \circ \omega_V' \circ \omega_{\Sigma^{-1} \Sigma V}'^{-1} \circ \Sigma \eta_{\Sigma^{-1} \Sigma V}^{-1} = \Id_{\Sigma V},
\]
for every $V \in \Gamma_0$, where the second equality follows from~\eqref{eq:eta}. Consequently, $\omega'' = \Id_\Sigma$ and the claim follows.
\end{proof}

\subsection{Summary}

We summarize now the above considerations.

\begin{prop} \label{prop:summary}
Let $\Lambda := \Lambda (n, m)$, for $n \in \bbN_+$ and $m \in \bbN$.
\begin{enumerate}

\item \label{point:summary1}
If $(F, \omega)$ is a pseudo-identify on $\bK^b (\proj \Lambda)$, then $(F, \omega)$ is isomorphic, as a triangle functor, to $(\Id_{\mathbf{K}^b(\proj \Lambda)}, \Id_\Sigma)$.

\item \label{point:summary2}
In particular, if $G \colon \bD^b (\mod \Lambda) \to \bD^b (\mod B)$ is a triangle equivalence, for some algebra $B$, then $G$ is standard.

\end{enumerate}
\end{prop}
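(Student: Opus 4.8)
The plan is to assemble the ingredients prepared in Sections~\ref{sect:category} and~\ref{sect:proof}. For part~\eqref{point:summary1}, fix a pseudo-identity $(F,\omega)$ on $\bK^b(\proj\Lambda)$. Recall first that $F$ is automatically an autoequivalence (by~\cite{ChenYe}*{Corollary~3.4}), so it preserves irreducibility of morphisms; this is exactly what legitimises the inductive construction of the automorphisms $\phi_U\in\Aut(U)$, $U\in\Gamma_0$, carried out in the subsection on the construction of the isomorphism. By Proposition~\ref{prop:iso} these automorphisms assemble into a natural isomorphism $\phi\colon F\to\Id_{\bK^b(\proj\Lambda)}$ together with a connecting isomorphism $\omega'\colon\Sigma\to\Sigma$ such that $\phi$ is an isomorphism of triangle functors $(F,\omega)\xrightarrow{\sim}(\Id_{\bK^b(\proj\Lambda)},\omega')$. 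Thus it remains only to show that $(\Id_{\bK^b(\proj\Lambda)},\omega')$ is isomorphic, as a triangle functor, to $(\Id_{\bK^b(\proj\Lambda)},\Id_\Sigma)$.

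For this last point I would split into the three cases dictated by the pair $(n,m)$, which together exhaust all $n\in\bbN_+$ and $m\in\bbN$. If $n>1$, Lemma~\ref{lemm:coniso} already forces $\omega'=\Id_\Sigma$. If $n=1$ and $m>0$, Lemma~\ref{lemm:connect} again gives $\omega'=\Id_\Sigma$. If $n=1$ and $m=0$, Lemma~\ref{lemm:connectbis} supplies an explicit isomorphism of triangle functors $\eta\colon(\Id_{\bK^b(\proj\Lambda)},\omega')\to(\Id_{\bK^b(\proj\Lambda)},\Id_\Sigma)$. Composing $\phi$ with the identity in the first two cases and with $\eta$ in the third yields the required isomorphism $(F,\omega)\cong(\Id_{\bK^b(\proj\Lambda)},\Id_\Sigma)$, establishing~\eqref{point:summary1}.

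Part~\eqref{point:summary2} is then formal. Statement~\eqref{point:summary1} says precisely that every pseudo-identity on $\bK^b(\proj\Lambda)$ is isomorphic, as a triangle functor, to $(\Id_{\bK^b(\proj\Lambda)},\Id_\Sigma)$, which is the hypothesis of Proposition~\ref{prop:standard} applied with $A:=\Lambda$. Hence, by Proposition~\ref{prop:standard}, for every finite dimensional $\Bbbk$-algebra $B$ every derived equivalence between $\Lambda$ and $B$ is standard; in particular the given triangle equivalence $G\colon\bD^b(\mod\Lambda)\to\bD^b(\mod B)$ is standard.

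I do not expect a genuinely new obstacle in this proposition: all the real work has been done in the preceding lemmas, and the statement is essentially an organisational one. The only place where some care is needed is the case division in~\eqref{point:summary1} — one must check that the hypotheses of Lemmas~\ref{lemm:coniso}, \ref{lemm:connect} and~\ref{lemm:connectbis} between them handle $n>1$ (any $m$), $n=1$ with $m>0$, and $n=1$ with $m=0$, and that Proposition~\ref{prop:iso} is applied before, not after, this dispatch so that the connecting isomorphism $\omega'$ is the only remaining unknown.
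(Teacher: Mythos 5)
Your proposal is correct and follows essentially the same route as the paper: apply Proposition~\ref{prop:iso} to reduce to the identity functor with an unknown connecting isomorphism $\omega'$, then dispatch the three cases $n>1$, $n=1$ with $m>0$, and $n=1$ with $m=0$ via Lemmas~\ref{lemm:coniso}, \ref{lemm:connect} and~\ref{lemm:connectbis}, and deduce part~\eqref{point:summary2} from Proposition~\ref{prop:standard}. Your explicit case division is merely a spelled-out version of the paper's one-line appeal to the same three lemmas.
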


\begin{proof}
\eqref{point:summary1}~It follows from Propostion~\ref{prop:iso} that there exists a natural isomorphism $\phi \colon F \to \Id_{\bK^b (\proj \Lambda)}$ and a natural isomorphism $\omega' \colon \Sigma \to \Sigma$ such that $\phi$ is a natural isomorphism between triangle functors $(F, \omega)$ and $(\Id_{\bK^b (\proj \Lambda)}, \omega')$. Now Lemmas~\ref{lemm:coniso}, \ref{lemm:connect} and~\ref{lemm:connectbis} imply that there exists a natural isomorphism $\eta \colon (\Id_{\bK^b (\proj \Lambda)}, \omega') \to (\Id_{\bK^b (\proj \Lambda)}, \Id_\Sigma)$ of triangle functors. By taking $\eta \circ \phi$ we get our claim.

\eqref{point:summary2}~This follows immediately from~\eqref{point:summary1} and Proposition~\ref{prop:standard}.
\end{proof}

\subsection{Proof of Theorem~\ref{theo:main_special}}

Let $F \colon \bD^b (\mod A) \to \bD^b (\mod B)$ be a derived equivalence between derived discrete algebras $A$ and $B$ of infinite global dimension. By Proposition~\ref{prop:discrete} there exist $n \in \bbN_+$ and $m \in \bbN$ such that $A$ is derived equivalent to $\Lambda := \Lambda (n, m)$. By \cite{Rickard}*{Theorem~3.3} we know there exists a standard derived equivalence $H \colon \bD^b (\mod A) \to \bD^b (\mod \Lambda)$. Moreover, there exists an equivalence $G \colon \bD^b (\mod \Lambda) \to \bD^b (\mod B)$ such that the functors $F$ and $G \circ H$ are isomorphic. Now $G$ is standard by Proposition~\ref{prop:summary}\eqref{point:summary2}. Since a composition of standard derived equivalences is obviously standard, the claim follows. \qed

\subsection*{Funding}

The both authors gratefully acknowledge the support of the National Science Centre grant no.~2020/37/B/ST1/00127.

\bibsection

\begin{biblist}

\bib{ArnesenLakingPauksztello}{article}{
   author={Arnesen, K. K.},
   author={Laking, R.},
   author={Pauksztello, D.},
   title={Morphisms between indecomposable complexes in the bounded derived
   category of a gentle algebra},
   journal={J. Algebra},
   volume={467},
   date={2016},
   pages={1--46},
}

\bib{ArnesenLakingPauksztelloPrest}{article}{
   author={Arnesen, K. K.},
   author={Laking, R.},
   author={Pauksztello, D.},
   author={Prest, M.},
   title={The Ziegler spectrum for derived-discrete algebras},
   journal={Adv. Math.},
   volume={319},
   date={2017},
   pages={653--698},
}

\bib{BekkertMerklen}{article}{
   author={Bekkert, V.},
   author={Merklen, H. A.},
   title={Indecomposables in derived categories of gentle algebras},
   journal={Algebr. Represent. Theory},
   volume={6},
   date={2003},
   number={3},
   pages={285--302},
}

\bib{Bobinski2011a}{article}{
   author={Bobi\'nski, G.},
   title={The almost split triangles for perfect complexes over gentle algebras},
   journal={J. Pure Appl. Algebra},
   volume={215},
   date={2011},
   number={4},
   pages={642--654},
}

\bib{Bobinski2011b}{article}{
   author={Bobi\'nski, G.},
   title={The graded centers of derived discrete algebras},
   journal={J. Algebra},
   volume={333},
   date={2011},
   pages={55--66},
}

\bib{BobinskiGeissSkowronski}{article}{
   author={Bobi\'nski, G.},
   author={Gei\ss, C.},
   author={Skowro\'nski, A.},
   title={Classification of discrete derived categories},
   journal={Cent. Eur. J. Math.},
   volume={2},
   date={2004},
   number={1},
   pages={19--49},
}

\bib{BobinskiKrause}{article}{
   author={Bobi\'nski, G.},
   author={Krause, H.},
   title={The Krull-Gabriel dimension of discrete derived categories},
   journal={Bull. Sci. Math.},
   volume={139},
   date={2015},
   number={3},
   pages={269--282},
}

\bib{BobinskiSchmude}{article}{
   author={Bobi\'nski, G.},
   author={Schmude, J.},
   title={Derived Hall algebras of one-cycle gentle algebras: the infinite global dimension case},
   journal={J. Algebra},
   volume={563},
   date={2020},
   pages={148--197},
}

\bib{Broomhead}{article}{
   author={Broomhead, N.},
   title={Thick subcategories of discrete derived categories},
   journal={Adv. Math.},
   volume={336},
   date={2018},
   pages={242--298},
}

\bib{BroomheadPauksztelloPloog2016}{article}{
   author={Broomhead, N.},
   author={Pauksztello, D.},
   author={Ploog, D.},
   title={Discrete derived categories II: the silting pairs CW complex and the stability manifold},
   journal={J. Lond. Math. Soc. (2)},
   volume={93},
   date={2016},
   number={2},
   pages={273--300},
}

\bib{BroomheadPauksztelloPloog2017}{article}{
   author={Broomhead, N.},
   author={Pauksztello, D.},
   author={Ploog, D.},
   title={Discrete derived categories I: homomorphisms, autoequivalences and t-structures},
   journal={Math. Z.},
   volume={285},
   date={2017},
   number={1-2},
   pages={39--89},
}

\bib{CanackciPauksztelloSchroll}{article}{
   author={\c Canak\c c\i, \.I.},
   author={Pauksztello, D.},
   author={Schroll, S.},
   title={Mapping cones in the bounded derived category of a gentle algebra},
   journal={J. Algebra},
   volume={530},
   date={2019},
   pages={163--194},
}

\bib{Chen}{article}{
	author={Chen, X.-W.},
	title={A note on standard equivalences},
	journal={Bull. Lond. Math. Soc.},
	volume={48},
	date={2016},
	number={5},
	pages={797--801},
}

\bib{ChenYe}{article}{
   author={Chen, X.-W.},
   author={Ye, Y.},
   title={The $\mathbf{D}$-standard and $\mathbf{K}$-standard categories},
   journal={Adv. Math.},
   volume={333},
   date={2018},
   pages={159--193},
}

\bib{ChenZhang}{article}{
	author={Chen, X.-W.},
	author={Zhang, C.},
	title={The derived-discrete algebras and standard equivalences},
	journal={J. Algebra},
	volume={525},
	date={2019},
	pages={259--283},
}

\bib{Happel}{book}{
   author={Happel, D.},
   title={Triangulated Categories in the Representation Theory of Finite-dimensional Algebras},
   series={London Math. Soc. Lecture Note Ser.},
   volume={119},
   publisher={Cambridge Univ. Press},
   date={1988},
   pages={x+208},
}

\bib{Minamoto}{article}{
	author={Minamoto, H.},
	title={Ampleness of two-sided tilting complexes},
	journal={Int. Math. Res. Not. IMRN},
	volume={2012},
	number={1},
	pages={67--101},
}

\bib{MiyachiYekutieli}{article}{
	author={Miyachi, J.},
	author={Yekutieli, A.},
	title={Derived Picard groups of finite-dimensional hereditary algebras},
	journal={Compositio Math.},
	volume={129},
	date={2001},
	number={3},
	pages={341--368},
}

\bib{Orlov}{article}{
   author={Orlov, D. O.},
   title={Equivalences of derived categories and $K3$ surfaces},
   journal={J. Math. Sci. (New York)},
   volume={84},
   date={1997},
   number={5},
   pages={1361--1381},
}

\bib{Qin}{article}{
   author={Qin, Y.},
   title={Jordan-H\"older theorems for derived categories of derived discrete algebras},
   journal={J. Algebra},
   volume={461},
   date={2016},
   pages={295--313},
}

\bib{Rickard}{article}{
   author={Rickard, J.},
   title={Derived equivalences as derived functors},
   journal={J. London Math. Soc. (2)},
   volume={43},
   date={1991},
   number={1},
   pages={37--48},
}

\bib{Ringel1984}{book}{
   author={Ringel, C. M.},
   title={Tame Algebras and Integral Quadratic Forms},
   series={Lecture Notes in Math.},
   volume={1099},
   publisher={Springer, Berlin},
   date={1984},
   pages={xiii+376},
}

\bib{Vossieck}{article}{
   author={Vossieck, D.},
   title={The algebras with discrete derived category},
   journal={J. Algebra},
   volume={243},
   date={2001},
   number={1},
   pages={168--176},
}

\end{biblist}

\end{document}